\def\thm@space@setup{%
  \thm@preskip=\parskip \thm@postskip=0pt
}
\newtheorem{teo}{Theorem}[section]
\newtheorem{lem}[teo]{Lemma}
\newtheorem{prop}[teo]{Proposition}
\newtheorem{cor}[teo]{Corollary}
\newtheorem*{teo*}{Theorem}
\newtheorem*{teo:maxvol}{Theorem \ref{maxvol}}
\newtheorem*{teo:cong}{Theorem \ref{teo:cong}}
\newtheorem*{teo:maxvolconj}{Theorem \ref{teovol}}
\newtheorem*{lem:stima3}{Lemma \ref{stima3}}
\newtheorem*{lem:symm}{Lemma \ref{symm}}
\newtheorem*{prop:1}{Proposition \ref{prop:degen}}
\newtheorem*{prop:2}{Proposition \ref{prop:gener}}
\newtheorem*{prop:3}{Proposition \ref{prop:impropideal}}
\theoremstyle{definition}
\newtheorem{dfn}[teo]{Definition}
\newtheorem*{dfn*}{Definition}
\newtheorem{ex}[teo]{Example}
\newtheorem{oss}[teo]{Remark}
\newtheorem*{dom*}{Question}
\newtheorem*{cng:maxvol}{The Maximum Volume Conjecture}
\newcommand{\R}{\mathbb{R}}
\newcommand{\N}{\mathbb{N}}
\newcommand{\ra}{\rightarrow}
\newcommand{\vol}{\mathrm{Vol}}
\newcommand{\h}{\mathbb{H}^3}
\newcommand{\rp}{\mathbb{RP}^3}
\title{The maximum volume of hyperbolic polyhedra}
\author{Giulio Belletti}
\date{}
\newcommand{\address}{{
  \bigskip
  \footnotesize

  Giulio Belletti, \textsc{Scuola Normale Superiore, Pisa, Italy}\par\nopagebreak
  \textit{E-mail address},  \texttt{giulio.belletti@sns.it}

}}
\begin{document}

\maketitle

\begin{abstract}
 We study the supremum of the volume of hyperbolic polyhedra with some fixed combinatorics and with vertices of any kind (real, ideal or hyperideal). We find that the supremum is always equal to the volume of the rectification of the $1$-skeleton. 
 
 The theorem is proved by applying a sort of volume-increasing flow to any hyperbolic polyhedron. Singularities may arise in the flow because some strata of the polyhedron may degenerate to lower-dimensional objects; when this occurs, we need to study carefully the combinatorics of the resulting polyhedron and continue with the flow, until eventually we get a rectified polyhedron.
\end{abstract}
\tableofcontents
\section{Introduction}

In \cite{maxvolconj}, the author introduces a ``Turaev-Viro'' invariant of graphs $\Gamma\subseteq S^3$, denoted with $TV(\Gamma)$, and proposes the following conjecture on its asymptotic behavior in the case where $\Gamma$ is planar and $3$-connected (which is equivalent to saying that $\Gamma$ is the $1$-skeleton of some polyhedron). 

\begin{cng:maxvol}\label{maxvolconj}
 Let $\Gamma\subseteq S^3$ be a $3$-connected planar graph.
 Then
 \begin{displaymath}
  \lim_{r\ra+\infty} \frac{\pi}{r} \log\left(TV_r(\Gamma)\right)=\sup_{P}\mathrm{Vol}(P)
 \end{displaymath}
 where $P$ varies among all proper generalized hyperbolic polyhedra (see Definition \ref{dfn:poly}) with $\Gamma$ as a $1$-skeleton, and $r$ ranges across all odd natural numbers.
\end{cng:maxvol}

The Maximum Volume Conjecture naturally leads to the question of what is the supremum of all volumes of polyhedra sharing the same $1$-skeleton. This is answered here by the following:

\begin{teo:maxvol}
 For any $3$-connected planar graph $\Gamma$,
 $$\sup_{P}\mathrm{Vol}(P)=\vol\left(\overline{\Gamma}\right)$$
 where $P$ varies among all proper generalized hyperbolic polyhedra with $1$-skeleton $\Gamma$ and $\overline{\Gamma}$ is the rectification of $\Gamma$.
\end{teo:maxvol}

The rectification of a graph is defined in Definition \ref{dfn:rect}; for now it suffices to say that $\overline{\Gamma}$ is a finite volume hyperbolic polyhedron that can be easily computed (together with its volume) from $\Gamma$.

This result is obtained by applying a sort of volume-increasing ``flow'' to a polyhedron and carefully analyzing the resulting degenerations.

For the tetrahedron Theorem \ref{maxvol} was proven in \cite[Theorem 4.2]{ushi}. In this case $\sup_{T}\vol(T)$ among all hyperbolic tetrahedra is equal to $v_8\sim3.66$, the volume of the ideal right-angled octahedron. We will see in Section \ref{sec:rect} that this is indeed the volume of the rectification of the tetrahedral graph.

In Section \ref{sec:volconj} we give the basic definitions related to the hyperbolic polyhedra we consider. In Section \ref{sec:background} we describe the space of polyhedra and the properties of the volume function; the material in this section is mostly an expansion of well-known classical results to a wider class of polyhedra. Finally in Section \ref{sec:maxvol} we give the proof of Theorem \ref{maxvol}. 

\textbf{Acknowledgments.} I wish to thank my advisors Francesco Costantino and Bruno Martelli for their constant guidance and support. I would also like to thank the participants of the Geometry Seminar at the University of Pisa, especially Roberto Frigerio and Leone Slavich, for their many helpful comments.

\section{Generalized hyperbolic polyhedra}\label{sec:volconj}

 Recall the projective model for hyperbolic space $\mathbb{H}^3\subseteq\mathbb{R}^3\subseteq\mathbb{RP}^3$ where $\mathbb{H}^3$ is the unit ball of $\R^3$ (for the basic definitions see for example \cite{bonbao}). Notice that for convenience we have picked an affine chart $\mathbb{R}^3\subseteq\mathbb{RP}^3$, so that it always make sense to speak of segments between two points, half spaces, etcetera; this choice is inconsequential, up to isometry. Isometries, in this model, correspond to projective transformations that preserve the unit sphere.
 
We can associate to a point $p$ lying in $\mathbb{R}^3\backslash \overline{\mathbb{H}^3}$ a plane $\Pi_p\subseteq \h$, called the \emph{polar plane} of $p$, such that all lines passing through $\mathbb{H}^3$ and $p$ are orthogonal to $\Pi_p$.
If $p\in \mathbb{R}^3\backslash\overline{\mathbb{H}^3}$, denote with $H_p\subseteq\mathbb{H}^3$ the half space delimited by the polar plane $\Pi_p$ on the other side of $p$; in other words, $H_p$ contains $0\in\R^3$. If $p,p'\in\R^3\backslash\h$ and the line from $p$ to $p'$ passes through $\mathbb{H}^3$, then $\Pi_p$ and $\Pi_{p'}$ are disjoint \cite[Lemma 4]{bonbao}. Specifically, if the segment from $p$ to $p'$ intersects $\mathbb{H}^3$, then $\Pi_p\subseteq H_{p'}$ and $\Pi_{p'}\subseteq H_{p}$; if however the segment does not intersect $\h$, but the half line from $p$ to $p'$ does, then $H_{p}\subseteq H_{p'}$. If $p$ gets pushed away from $\h$, then $\Pi_p$ gets pushed closer to the origin of $\R^3$.
 
 \begin{dfn}
  A \emph{projective polyhedron} in $\mathbb{RP}^3$ is a non-degenerate convex polyhedron in some affine chart of $\mathbb{RP}^3$. Alternatively, it is the closure of a connected component of the complement of finitely many planes in $\mathbb{RP}^3$ that does not contain any projective line.
 \end{dfn}

 Up to isometry of $\h$ we can assume that any projective polyhedron is contained in the standard affine chart.
 
\begin{dfn} \label{dfn:poly}
We introduce the following definitions.
 \begin{itemize}
  \item 
We say that a projective polyhedron $P\subseteq \R^3\subseteq\mathbb{RP}^3$ is a \emph{generalized hyperbolic polyhedron} if each edge of $P$ intersects $\mathbb{H}^3$ (\cite[Definition 4.7]{rivhodg}).
\item A vertex of a generalized hyperbolic polyhedron is \emph{real} if it lies in $\h$, \emph{ideal} if it lies in $\partial \h$ and \emph{hyperideal} otherwise.
\item A generalized hyperbolic polyhedron $P$ is \emph{proper} if for each hyperideal vertex $v$ of $P$ \emph{the interior} of the polar half space $H_v$ contains all the other real vertices of $P$ (see Figure \ref{fig:propbadtrunc}, left). We say that it is \emph{almost proper} if it is not proper but still for each hyperideal vertex $v$ of $P$ , the polar half space $H_v$ contains all the other real vertices of $P$; we call a vertex $v$ belonging to some $\Pi_{v'}$ an \emph{almost proper vertex} (see Figure \ref{fig:propbadtrunc}, right), and $\overrightarrow{vv'}$ an almost proper edge (by contrast, the other vertices and edges are \emph{proper}).
\item 
 We define the \emph{truncation} of a generalized hyperbolic polyhedron $P$ at a hyperideal vertex $v$ to be the intersection of $P$ with $H_v$; similarly the \emph{truncation} of $P$ is the truncation at every hyperideal vertex, that is to say $P\cap\left(\cap_{v \textrm{ hyperideal}}H_v\right)$. We say that the \emph{volume} of $P$ is the volume of its truncation; in the same spirit, the \emph{length} of an edge of $P$ is the length of its subsegment contained in the truncation. Notice that the volume of a non-empty generalized hyperbolic polyhedron could be $0$ if the truncation is empty; likewise the length of some of its edges could be $0$.
 \end{itemize}

\end{dfn}

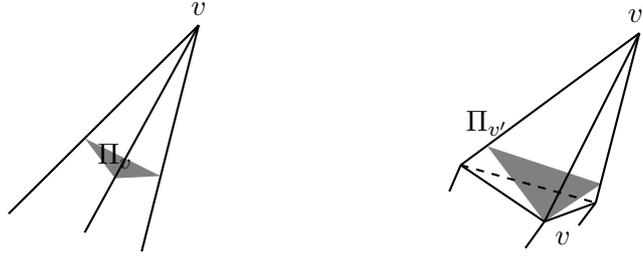
\begin{figure}
 \centering
 \begin{minipage}{.45\textwidth}
  \centering
    \begin{tikzpicture}
\centering
\fill[fill=gray](2.5,2.5)--(3.5,2)--(2.9,1.97)node[above]{$\Pi_v$};
\draw[thick] (1.5,1.5)--(4,4)node[above]{$v$};
\draw[thick](3.25,1)--(4,4);
\draw[thick] (2.5,1.25)--(4,4);
\end{tikzpicture}
 \end{minipage}
\begin{minipage}{.45\textwidth}
 \centering
   \begin{tikzpicture}
\centering

\fill[fill=gray](3,3.5)node[above]{$\Pi_{v'}$}--(4.5,3)--(3.75,2.5);
\draw[thick](5,5)--(4.425,2.75);
\draw[thick](4.2,2.45)--(4.425,2.75);
\draw [thick](5,5)node[above]{$v'$}--(2.65,3.25);
\draw[thick](2.65,3.25)--(2.5,2.9);
\draw[thick] (5,5)--(3.75,2.5);
\draw[thick] (3.5,2.15)--(3.75,2.5);
\draw[thick] (4.425,2.75)--(3.75,2.5)node[below right]{$v$};
\draw[thick](3.75,2.5)--(2.65,3.25);
\draw[thick,dashed](2.65,3.25)--(4.425,2.75);
\end{tikzpicture}
\end{minipage}
\caption{A proper (left) and almost proper (right) truncation.}\label{fig:propbadtrunc}
\end{figure}

In the remainder of the paper we simply say \emph{proper polyhedra} (or \emph{almost proper
polyhedra}) for proper (respectively, almost proper) generalized hyperbolic polyhedra. We are mostly interested in proper polyhedra; almost proper polyhedra can arise as limits of proper polyhedra, and they will be studied carefully in the proof of Theorem \ref{maxvol}.

When it has positive volume, the truncation of a generalized hyperbolic polyhedron $P$ is itself a polyhedron; some of its faces are the truncation of the faces of $P$, while the others are the intersection of $P$ with some truncating plane; we call such faces \emph{truncation faces}. Notice that distinct truncation faces are disjoint (even more, the planes containing them are disjoint) \cite[Lemma 4]{bonbao}. If an edge of the truncation of $P$ is not the intersection of an edge of $P$ with the truncating half-spaces, then we say that the edge is arising from the truncation. Every edge that arises from truncation is an edge of a truncation faces. The converse is true for proper polyhedra but not necessarily for almost proper ones: it could happen that an entire edge of $P$ lies in a truncation plane, and we do not consider this to be an edge arising from the truncation.

\begin{oss}
 For both almost proper and proper polyhedra the dihedral angles at the edges arising from the truncation are $\frac{\pi}{2}$.
\end{oss}

\begin{oss}
 An important feature of the truncation of a proper polyhedron $P$ is that it determines $P$ (once we know which faces of $P$ are the truncation faces), since it is enough to remove the truncation faces to undo the truncation (see Figure \ref{fig:recover}). This also holds for almost proper polyhedra (see Figure \ref{fig:improperdef} in Section \ref{sec:maxvol}). In particular this will allow us to use many standard techniques to study them, such as the Schl\"afli formula (see Theorem \ref{teo:schlafli}).
\end{oss}

  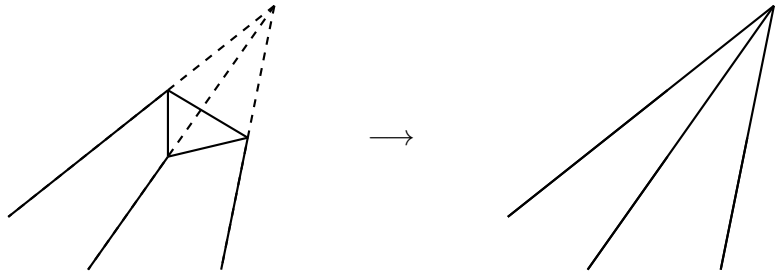
\begin{figure}
  \centering
\begin{minipage}{.45\textwidth}
\centering
 \begin{tikzpicture}[scale=0.7]
\draw [thick,dashed](5,5)--(4,0);
\draw [thick, dashed](5,5)--(1.5,0);
\draw[thick,dashed](5,5)--(0,1);
\draw [thick] (4,0)--(4.5,2.5);
\draw [thick] (0,1)--(3,3.4);
\draw [thick] (1.5,0)--(3,2.14);
\draw [thick] (4.5,2.5)--(3,3.4);
\draw [thick](3,2.14)- -(3,3.4);
\draw [thick] (3,2.14)--(4.5,2.5);
\end{tikzpicture}
\end{minipage}
$\longrightarrow$
\begin{minipage}{.45\textwidth}
\centering
 \begin{tikzpicture}[scale=0.7]
\draw [thick](5,5)--(4,0);
\draw [thick](5,5)--(1.5,0);
\draw[thick](5,5)--(0,1);
\draw [thick] (4,0)--(4.5,2.5);
\draw [thick] (0,1)--(3,3.4);
\draw [thick] (1.5,0)--(3,2.14);
\end{tikzpicture}
\end{minipage}
\caption{Removing the truncation faces recovers the original polyhedron.}\label{fig:recover}
 \end{figure}

We are always going to consider \emph{face marked} polyhedra; this means that each face of a polyhedron is uniquely determined, and therefore they never have any symmetry.

\begin{oss}\label{rem:unique}
 If $\Gamma$ is the $1$-skeleton of a projective polyhedron, then it is $3$-connected (that is to say, it cannot be disconnected by removing two vertices). Conversely, any $3$-connected planar graph is the $1$-skeleton of a proper polyhedron \cite{steinitz}. 
 If a planar graph is $3$-connected, then it admits a unique embedding in $S^2$ (up to isotopies of $S^2$ and mirror symmetry) \cite[Corollary 3.4]{fle}. Hence when in the following we consider a planar graph $\Gamma$, it is always going to be $3$-connected and embedded in $S^2$. In particular, it will make sense to talk about the faces of $\Gamma$ and the dual of $\Gamma$, denoted with $\Gamma^*$. The graph $\Gamma^*$ is the $1$-skeleton of the cellular decomposition of $S^2$ dual to that of $\Gamma$. Notice that if $\Gamma$ is the $1$-skeleton of a polyhedron $P$, then $\Gamma^*$ is the $1$-skeleton of the polyhedron whose vertices are dual to the faces of $P$, hence $\Gamma$ is $3$-connected if and only if $\Gamma^*$ is.
\end{oss}

\begin{dfn}
 Let $\Gamma$ be a planar $3$-connected graph; the space of all the face-marked proper polyhedra with $1$-skeleton $\Gamma$ considered up to isometry (i.e. projective transformations preserving the unit sphere) is denoted as $\mathcal{A}_\Gamma$.
\end{dfn}

\begin{oss}
 It is important not to mix up the $1$-skeleton of a projective polyhedron with the $1$-skeleton of its truncation. In what follows, whenever we refer to
 $1$-skeleta we always refer to those of projective polyhedra (and not their truncation) unless specified.
\end{oss}

Whether a vertex of a polyhedron is real, ideal or hyperideal can be read directly from the dihedral angles.

\begin{lem}\label{lem:angid}
 Let $P$ be a generalized hyperbolic polyhedron, $v$ a vertex of $P$ and $\theta_1,\dots,\theta_k$ the dihedral angles of the edges incident to $v$. Then $v$ is hyperideal if and only if $\theta_1,\dots,\theta_k$ are the angles of a hyperbolic $k$-gon; $v$ is ideal if and only if $\theta_1,\dots,\theta_k$ are the angles of a Euclidean $k$-gon; $v\in\h$ if and only if $\theta_1,\dots,\theta_k$ are the angles of a spherical $k$-gon. Equivalently, 
 \begin{itemize}
  \item $v$ is hyperideal if and only if $\sum_i \theta_i<(k-2)\pi;$
  \item $v$ is ideal if and only if $\sum_i \theta_i=(k-2)\pi;$
  \item $v\in\h$ if and only if $\sum_i\theta_i>(k-2)\pi$.
 \end{itemize}
\end{lem}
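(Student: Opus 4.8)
The plan is to determine the type of $v$ from a suitable two‑dimensional ``link'' of $v$ inside $P$, treating the three cases separately, and then to obtain all the equivalences simultaneously from the observation that both trichotomies in play --- ``$v$ real / ideal / hyperideal'' and ``$\sum_i\theta_i$ greater than / equal to / less than $(k-2)\pi$'' --- are exhaustive and mutually exclusive, so it suffices to match them up. Write $F_1,\dots,F_k$ for the faces of $P$ at $v$, cyclically ordered with $e_i=F_{i-1}\cap F_i$; recall $k\ge3$ since $\Gamma$ is $3$-connected.

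If $v\in\h$, intersect $P$ with a small geodesic sphere $S$ centered at $v$: the result is a convex spherical $k$-gon, with one side on each $F_i$ and one vertex on each $e_i$, and the interior angle at the vertex on $e_i$ is precisely the dihedral angle $\theta_i$. Hence the $\theta_i$ are the angles of a spherical $k$-gon, and Gauss--Bonnet gives $\sum_i\theta_i=(k-2)\pi+\operatorname{Area}>(k-2)\pi$. The ideal case is analogous in the upper half‑space model with $v$ at $\infty$: the $F_i$ become vertical half‑planes, the $e_i$ vertical half‑lines, and a horosphere centered at $v$ meets $P$ in a convex Euclidean $k$-gon whose interior angles are again the $\theta_i$; so they are the angles of a Euclidean $k$-gon and $\sum_i\theta_i=(k-2)\pi$.

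The hyperideal case needs the most care. I would replace $P$ near $v$ by its tangent cone $C$, a convex polyhedral cone with apex $v$, edge rays $e_1,\dots,e_k$, and facets spanned by consecutive pairs, whose dihedral angles along the $e_i$ equal those of $P$. Since each $e_i$ meets $\h$, it is orthogonal to the polar plane $\Pi_v$ and hits it in a point $q_i\in\h$ (this uses the properties of $\Pi_p$ recalled above and \cite[Lemma 4]{bonbao}); a short affine computation shows $C\cap\Pi_v=\operatorname{conv}(q_1,\dots,q_k)$, a compact convex $k$-gon in the hyperbolic plane $\Pi_v$ with one vertex on each $e_i$. Because $e_i\perp\Pi_v$, the plane $\Pi_v$ is, near $q_i$, the plane in which the dihedral angle of $C$ along $e_i$ is measured, so the interior angle of this hyperbolic $k$-gon at $q_i$ is $\theta_i$. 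Thus the $\theta_i$ are the angles of a hyperbolic $k$-gon, and Gauss--Bonnet yields $\sum_i\theta_i=(k-2)\pi-\operatorname{Area}<(k-2)\pi$.

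To conclude, note that each of the three geometric statements forces its sign of $\sum_i\theta_i-(k-2)\pi$ (the spherical and hyperbolic cases by Gauss--Bonnet, the Euclidean one because a Euclidean $k$-gon has angle sum $(k-2)\pi$); since the three cases for $v$ are exhaustive and mutually exclusive and we have matched each to a distinct one of the three exhaustive, mutually exclusive sign conditions, every implication above is an equivalence, which proves all four formulations. I expect the only genuine obstacle to be the bookkeeping in the hyperideal case: verifying that $C\cap\Pi_v$ really is a compact $k$-gon with exactly one vertex on each $e_i$ --- no side collapsing, no extra side --- which is where convexity of $P$ and the orthogonality $e_i\perp\Pi_v$ do the work.
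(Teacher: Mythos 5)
Your argument is correct: the paper gives no proof of its own and simply cites \cite[Proposition 5]{bonbao}, whose argument is exactly this one --- reading off the vertex type from the link of $v$ (geodesic sphere, horosphere, or polar plane according to the case), identifying its angles with the dihedral angles, and applying Gauss--Bonnet together with the exhaustive-trichotomy observation. Your use of the tangent cone in the hyperideal case correctly handles the one real subtlety, namely that an edge of $P$ itself might terminate at a real vertex before reaching $\Pi_v$.
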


For a proof of this Lemma see for example \cite[Proposition 5]{bonbao}.

Finally in the proof of the main theorem we will need a way to deform a almost proper polyhedron to be proper. We will rely on the following easy lemma.

\begin{lem}\label{lem:unproper}
 Let $v\in\rp\backslash \overline{\h}$ and $w\in H_v$. If $\Psi$ is a translation of $\R^3$ or a homothety centered in $0$ such that $\Psi(v)$ is contained in the tangent cone of $v$ to $\partial\h$, then if $\Psi(w)\in\h$ it is also contained in the interior of $H_{\Psi(v)}$. 
\end{lem}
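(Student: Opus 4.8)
The plan is to reduce everything to a one-dimensional picture inside the plane spanned by $0$, $v$ and $w$ — or rather, to argue directly with the polar-plane monotonicity facts recalled in Section \ref{sec:volconj}. Recall the key input: if $p$ is pushed away from $\overline{\h}$ along a ray through the region where its polar plane lives, then $\Pi_p$ moves closer to $0$ and the half-space $H_p$ grows; more precisely, the excerpt states that if the half-line from $p$ to $p'$ meets $\h$ then $H_p\subseteq H_{p'}$. So the whole point of the hypothesis ``$\Psi(v)$ lies in the tangent cone of $v$ to $\partial\h$'' is exactly to guarantee that $v$ moves to $\Psi(v)$ along such a ray, i.e. along a half-line from $v$ that still stabs $\overline{\h}$ (the tangent cone from $v$ is the union of all lines through $v$ touching $\overline{\h}$, and the segment $[v,\Psi(v)]$ stays outside $\overline{\h}$ while the ray from $v$ through $\Psi(v)$ re-enters it).

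First I would treat the homothety case. If $\Psi$ is a homothety centered at $0$ with ratio $\lambda$, then $\Psi(v)$ is on the ray from $0$ through $v$; the condition that $\Psi(v)$ be in the tangent cone of $v$ forces $\lambda\ge 1$ (shrinking toward $0$ would pull $v$ inside its own tangent cone's ``bad'' side), so $v$ is pushed further from $\overline{\h}$ along the half-line $\overrightarrow{vv}$... — more honestly, along the half-line from $v$ in the direction away from $0$, which certainly meets $\h$. Hence by the cited monotonicity $H_v\subseteq H_{\Psi(v)}$, and since $\Psi$ is a projective transformation fixing $0$ (but not the sphere!) I must be a little careful: what I really want is that $\Psi(w)$, if it lands in $\h$, lands in the \emph{interior} of $H_{\Psi(v)}$. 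I would phrase it as: $\Psi(H_v)$ is the half-space bounded by $\Psi(\Pi_v)$ containing $0$; since $v$ moved outward, $\Psi(\Pi_v)$ separated from $\Pi_{\Psi(v)}$ in the right direction gives $\Psi(H_v)\cap\h\subseteq \mathrm{int}\,H_{\Psi(v)}$ (the two planes $\Psi(\Pi_v)$ and $\Pi_{\Psi(v)}$ are disjoint inside $\h$, with $\Psi(\Pi_v)$ on the $0$-side). Then $w\in H_v$ gives $\Psi(w)\in\Psi(H_v)$, and if moreover $\Psi(w)\in\h$ we conclude $\Psi(w)\in\mathrm{int}\,H_{\Psi(v)}$.

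Next the translation case, which is essentially the same argument with ``ray from $0$'' replaced by ``the line of the translation''. The hypothesis that $\Psi(v)$ stays in the tangent cone of $v$ to $\partial\h$ again says the translate of $v$ has moved to a point from which one still sees $\overline{\h}$, and — crucially — that the segment $[v,\Psi(v)]$ did not cross $\overline{\h}$, so $v$ genuinely moved \emph{away} from the sphere rather than across it. Thus the half-line from $v$ through $\Psi(v)$ meets $\h$, and the same monotonicity plus the disjointness-of-polar-planes lemma (\cite[Lemma 4]{bonbao}) yields $\Psi(H_v)\cap\h\subseteq\mathrm{int}\,H_{\Psi(v)}$; combined with $w\in H_v\Rightarrow \Psi(w)\in\Psi(H_v)$ and the assumption $\Psi(w)\in\h$, this finishes it.

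The main obstacle, I expect, is bookkeeping the inclusions at the right level of strictness: the statement asks for the \emph{interior} of $H_{\Psi(v)}$, so I cannot afford to be sloppy about whether the relevant polar planes are disjoint or merely tangent. The tangent-cone hypothesis is exactly what rules out the borderline (tangency) case — if $\Psi(v)$ were pushed all the way to $\partial\h$ or beyond in a degenerate direction, $\Pi_{\Psi(v)}$ could touch $\Psi(\Pi_v)$ and an interior point could slip to the boundary — so I would make sure to isolate and use that hypothesis precisely at the step where I invoke that $\Psi(\Pi_v)$ and $\Pi_{\Psi(v)}$ are \emph{disjoint} in $\h$. Everything else is a routine application of the facts about polar planes already assembled before Definition \ref{dfn:poly}.
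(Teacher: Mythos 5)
Your overall route is the same as the paper's -- the proof really is just the polar-plane monotonicity fact recalled before Definition \ref{dfn:poly}, combined with the observation that $\Psi(w)$ still lies in a set contained in the interior of $H_{\Psi(v)}$ -- and your translation case is essentially right. But your homothety case has the geometry exactly backwards, and this is a genuine error, not a bookkeeping slip. You claim the tangent-cone hypothesis forces $\lambda\ge 1$ and that $v$ then moves ``further from $\overline{\h}$ along the half-line from $v$ in the direction away from $0$, which certainly meets $\h$.'' Since $\h$ is the unit ball centered at $0$ and $|v|>1$, the ray $\{(1+s)v:s\ge 0\}$ consists of points of norm at least $|v|$ and never meets $\overline{\h}$; and the tangent cone of $v$ (the cone from $v$ over $\overline{\h}$, which is how the paper uses it -- see Figure \ref{fig:unproper} and the application in Corollary \ref{cor:locbad} with $\lambda\in(1-\epsilon,1]$) contains $\lambda v$ precisely when $\lambda\le 1$, not $\lambda\ge 1$. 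With your reading the lemma is simply false: in the disk model of $\mathbb{H}^2$ take $v=(2,0)$, so $\Pi_v=\{x_1=\tfrac12\}$ and $H_v=\{x_1\le\tfrac12\}$; let $w=(\tfrac12,0)\in H_v$ and $\lambda=\tfrac32$. Then $\Psi(w)=(\tfrac34,0)\in\mathbb{H}^2$, but $H_{\Psi(v)}=\{x_1\le\tfrac13\}$ does not contain it. The same happens one dimension up, and it is exactly the degeneration the lemma is designed to exploit in the opposite direction (an almost proper vertex becomes proper under \emph{contraction}, improper under expansion).

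The fix is to reverse every arrow: the hypothesis says $v$ is pushed \emph{toward} $\h$, so the segment $[v,\Psi(v)]$ misses $\overline{\h}$ while the half-line from $v$ through $\Psi(v)$ meets it; the fact recalled from \cite[Lemma 4]{bonbao} then gives $H_v\subseteq H_{\Psi(v)}$ with $\Pi_v$ and $\Pi_{\Psi(v)}$ disjoint, i.e.\ $\Pi_{\Psi(v)}\cap H_v=\emptyset$, so that $H_v$ lies in the interior of $H_{\Psi(v)}$. It then remains only to check $\Psi(w)\in H_v$, which is cleaner than routing through $\Psi(H_v)$: for the contraction this holds because $H_v$ is convex and contains $0$; for the translation because the translation vector points from $v$ into the cone over $\overline{\h}$ and hence has negative inner product with the outward normal of $H_v$. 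This two-line argument ($\Pi_{\Psi(v)}$ disjoint from $H_v$, and $\Psi(w)\in H_v$) is precisely the paper's proof.
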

Notice that in particular this lemma says that if $w$ is an almost proper vertex of a polyhedron $P$, then $\Psi(w)$ is a proper vertex of $\Psi(P)$.
\begin{proof}
 The plane $\Pi_{\Psi(v)}$ is disjoint from $H_v$ (see Figure \ref{fig:unproper}), and certainly $\Psi(w)\in H_v$.
\end{proof}

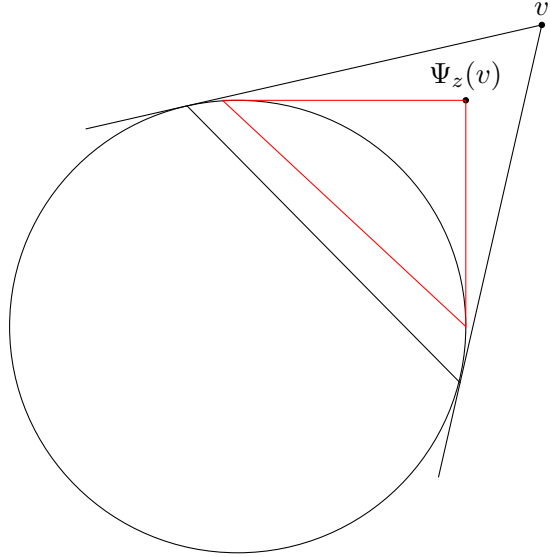
\begin{figure}
 \centering
  \begin{tikzpicture}
  \draw (1,1) circle[radius=3cm];
  \draw[fill=black] (5,5) node[above]{$v$} circle[radius=1pt];
  \draw[fill=black] (4,4) node[above]{$\Psi_z(v)$} circle[radius=1pt];
  \draw (5,5)--(-1,3.62);
  \draw (5,5)--(3.64,-1);
  \draw (0.33,3.93)--(3.93,0.25);
  \draw [color=red] (4,4)--(0.8,4);
  \draw [color=red] (4,4)--(4,1)--(0.8,4);
 \end{tikzpicture}
\caption{Pushing $P$ towards $\h$ pushes its dual plane away from the center}\label{fig:unproper}
\end{figure}

\section{The space of proper polyhedra and the volume function}\label{sec:background}

\subsection{The Bao-Bonahon existence and uniqueness theorem for hyperideal polyhedra}

A special class of proper polyhedra is that of the \emph{hyperideal polyhedra}, i.e. generalized hyperbolic polyhedra with no real vertices. Since there are no real vertices, hyperideal polyhedra are automatically proper. In \cite{bonbao}, Bao and Bonahon gave a complete description of the space of angles of hyperideal polyhedra.

\begin{teo}\label{teo:bonbao}
 Let $\Gamma$ be a $3$-connected planar graph with edges $e_1,\dots,e_k$. There exists a hyperideal polyhedron $P$ with $1$-skeleton $\Gamma$ and dihedral angles $\theta_1,\dots,\theta_k\in(0,\pi)$ at the edges $e_1,\dots,e_k$ if and only if the following conditions are satisfied:
 \begin{itemize}
  \item for any closed curve $\gamma\subseteq S^2$ passing transversely through distinct edges $e_{i_1},\dots,e_{i_h}$ of $\Gamma$ exactly once, we have $\sum_{j=1}^h\theta_{i_j}\leq (h-2)\pi$ with equality possible only if $e_{i_1},\dots,e_{i_h}$ share a vertex;
  \item for any arc $\gamma\subseteq S^2$ with endpoints in two different faces sharing a vertex, and passing transversely through the distinct edges $e_{i_1},\dots,e_{i_h}$ of $\Gamma$ exactly once, we have $\sum_{j=1}^h\theta_{i_j}< (h-1)\pi$ unless the edges $e_{i_1},\dots,e_{i_h}$ share a vertex.
 \end{itemize}
Moreover, if $P$ exists it is unique up to isometry.
\end{teo}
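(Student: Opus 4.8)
The plan is to use the classical continuity (deformation) method of Alexandrov and Andreev, adapted to hyperideal polyhedra.

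\emph{Necessity of the inequalities.} Let $P$ be a hyperideal polyhedron with $1$-skeleton $\Gamma$ and dihedral angle $\theta_i$ at the edge $e_i$, and let $\widehat{P}$ be its truncation; recall that in $\widehat{P}$ the truncation faces are pairwise disjoint and meet the truncated faces of $P$ orthogonally. Given a closed curve $\gamma\subseteq S^2$ meeting $e_{i_1},\dots,e_{i_h}$ transversely once each, I would realize $\gamma$ on $\partial\widehat{P}$ as a piecewise geodesic closed curve crossing the corresponding edges and bounding a disk $D$; since the induced metric has curvature $-1$, Gauss--Bonnet on $D$ yields $\mathrm{Area}(D)+\sum_j\theta_{i_j}=(h-2)\pi$ once the turning of $\partial D$ is accounted for, so that $\sum_j\theta_{i_j}\le (h-2)\pi$, with equality exactly when $D$ degenerates, i.e.\ when the $e_{i_j}$ bound a common vertex. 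The strict arc inequality $\sum_j\theta_{i_j}<(h-1)\pi$ follows from the analogous computation for a disk two of whose boundary arcs run along the two faces containing the endpoints of the arc, which shifts the bound from $(h-2)\pi$ to $(h-1)\pi$.

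\emph{Set-up.} Write $\mathcal{H}_\Gamma\subseteq\mathcal{A}_\Gamma$ for the space of hyperideal polyhedra with $1$-skeleton $\Gamma$, and $\mathcal{D}_\Gamma\subseteq(0,\pi)^k$ for the set of angle assignments obeying the two families of inequalities; being an intersection of half-spaces, $\mathcal{D}_\Gamma$ is convex, hence connected and simply connected. By the necessity part the dihedral-angle map $\Phi\colon\mathcal{H}_\Gamma\to\mathcal{D}_\Gamma$ is well defined and continuous, and I would establish three facts. (a) $\mathcal{H}_\Gamma$ is a smooth manifold of dimension $k$ and $\Phi$ is smooth: a hyperideal polyhedron is determined by its $F$ face planes, i.e.\ by a point of the $F$-fold product of de Sitter space modulo $\mathrm{Isom}(\h)$, subject to the $\deg v$ planes through each vertex $v$ being concurrent, which gives $\dim=3F-6-(2E-3V)=E=k$ by Euler's formula. (b) $\Phi$ is a local diffeomorphism: since the dimensions agree it is enough to prove $d\Phi$ injective, i.e.\ infinitesimal rigidity rel dihedral angles; the Schl\"afli formula (Theorem \ref{teo:schlafli}) applied to $\widehat{P}$ reads $d\vol(\widehat{P})=-\tfrac{1}{2}\sum_i\ell_i\,d\theta_i$ (truncation edges, whose angle is constantly $\tfrac{\pi}{2}$, contribute nothing), and an infinitesimal flex with all $d\theta_i=0$ is then excluded by a Cauchy-type sign-counting argument on $\Gamma$ applied to the induced variations of the truncated edge lengths $\ell_i$. (c) $\Phi$ is proper.

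\emph{Properness and conclusion.} For (c), given $P_n\in\mathcal{H}_\Gamma$ with $\Phi(P_n)\to\theta^{\infty}\in\mathcal{D}_\Gamma$, after normalizing by isometries and passing to a subsequence the face planes of $P_n$ converge, and one must rule out every degeneration of the limit: a face collapsing, two face planes merging, a vertex falling into $\overline{\h}$, an edge ceasing to meet $\h$, or the configuration escaping to infinity. The key point --- and the main obstacle of the whole argument --- is that each of these forces one of the sums $\sum_j\theta_{i_j}$ attached to a curve or arc as in the statement to attain its extreme value $(h-2)\pi$ or $(h-1)\pi$ with the $e_{i_j}$ \emph{not} concurrent, contradicting $\theta^{\infty}\in\mathcal{D}_\Gamma$; this exact correspondence between degenerations and the boundary facets of $\mathcal{D}_\Gamma$ is precisely why the inequalities take the stated form. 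Granting (a)--(c), $\Phi$ is a proper local diffeomorphism onto the connected manifold $\mathcal{D}_\Gamma$, hence a covering map, and since $\mathcal{D}_\Gamma$ is simply connected this covering is trivial, so $\Phi$ restricts to a homeomorphism on each component of $\mathcal{H}_\Gamma$. As $\mathcal{H}_\Gamma\neq\emptyset$ --- for instance a polyhedron realizing $\Gamma$ with all edges tangent to $\partial\h$, which exists for every $3$-connected planar $\Gamma$, becomes an element of $\mathcal{H}_\Gamma$ after a slight shrinking toward the origin that makes its edges cross $\h$ while keeping its vertices outside --- the map $\Phi$ is onto, which gives existence. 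Uniqueness amounts to $\mathcal{H}_\Gamma$ being connected, equivalently to the locally constant number of sheets of $\Phi$ being $1$; this is obtained either from a global Cauchy-type rigidity theorem (two hyperideal polyhedra with equal dihedral angles are isometric) or by counting $\Phi$-preimages over a conveniently chosen angle vector.
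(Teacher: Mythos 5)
This statement is not proved in the paper at all: it is quoted verbatim as the Bao--Bonahon existence and uniqueness theorem, with a citation to \cite{bonbao}, so there is no internal proof to compare yours against. Your sketch does follow the general outline of the original deformation-method proof (necessity of the inequalities, the angle map $\Phi$ as a proper local diffeomorphism onto the convex region $\mathcal{D}_\Gamma$, and a covering-space argument), so the strategy is the right one.

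That said, as a proof it has genuine gaps rather than omitted routine details. First, properness of $\Phi$ --- which you yourself identify as ``the main obstacle of the whole argument'' --- is exactly where the content of the theorem lives: showing that each possible degeneration of a sequence $P_n$ forces some curve or arc sum to hit $(h-2)\pi$ or $(h-1)\pi$ over non-concurrent edges is a long case analysis in \cite{bonbao}, and asserting the correspondence does not establish it. Second, the injectivity of $d\Phi$ is deferred to ``a Cauchy-type sign-counting argument'' without specifying how the truncation edges (whose angles are frozen at $\tfrac{\pi}{2}$) enter the count; the reduction to infinitesimal rigidity of the compact truncated polyhedron needs to be made explicit. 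Third, your seed for non-emptiness of $\mathcal{H}_\Gamma$ --- shrinking an edge-tangent realization of $\Gamma$ --- is circular in the logical architecture of this paper: Proposition \ref{prop:exuniq} constructs the rectification \emph{from} Theorem \ref{teo:bonbao}, and the paper explicitly avoids invoking the Koebe--Thurston circle-packing theorem for the same reason. You would need an independent construction of one hyperideal realization (or an independent proof of the midscribed realization theorem). Fourth, uniqueness is reduced to ``a global Cauchy-type rigidity theorem'' or to ``counting preimages,'' either of which is itself a nontrivial theorem. Finally, the Gauss--Bonnet argument for necessity is shakier than it looks: the induced metric on $\partial\widehat{P}$ is a hyperbolic cone metric with cone points at the real vertices of the truncation, and the turning angles of your piecewise-geodesic curve at its crossings with the edges are not the dihedral angles, so the identity $\mathrm{Area}(D)+\sum_j\theta_{i_j}=(h-2)\pi$ does not come out of Gauss--Bonnet as written; the actual necessity proof works with the polar dual configuration of the face planes.
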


In particular, this theorem says that dihedral angles uniquely determine a hyperideal polyhedron, and the space of angles of hyperideal polyhedra with fixed $1$-skeleton is a convex subset of $\R^k$.

 \subsection{The space of proper polyhedra \texorpdfstring{$\mathcal{A}_\Gamma$}{Ag}}\label{sec:polspace}
 
 Let $\Gamma$ be a $3$-connected planar graph, and denote with $\mathcal{P}_\Gamma$ the set of face-marked proper polyhedra with $1$-skeleton $\Gamma$; denote with $\mathcal{A}_\Gamma$ the set of isometry classes in $\mathcal{P}_\Gamma$.
 
The first result about $\mathcal{A}_\Gamma$ that we need is an explicit description of the set $\mathcal{A}_\Gamma$.

\begin{prop}\label{prop:manifold}
 The set $\mathcal{A}_\Gamma$ is naturally a smooth manifold.
\end{prop}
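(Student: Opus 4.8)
The plan is to realize $\mathcal{A}_\Gamma$ as a subset of a finite-dimensional space of ``marked polyhedra'' and to show that near every point this subset is cut out by the defining conditions in a way that makes it a smooth submanifold, or equivalently to exhibit explicit local coordinates. First I would fix a combinatorial type $\Gamma$ with $V$ vertices, $E$ edges and $F$ faces, and parametrize a projective polyhedron by the positions of its vertices in an affine chart $\R^3 \subseteq \rp$, i.e. by a point in $(\R^3)^V$. The condition that these points are the vertices of a convex polyhedron combinatorially isomorphic to $\Gamma$ is an \emph{open} condition (each face must be planar and the incidences must be respected, but planarity of a face with $\geq 4$ sides is a codimension condition), so it is cleaner to instead use the dual description: a projective polyhedron with $1$-skeleton $\Gamma$ is determined by the $F$ planes carrying its faces, i.e. by a point in $(\rp^{3*})^F$, and now the combinatorial constraint ``these $F$ half-spaces intersect in a polyhedron of type $\Gamma$'' together with ``every edge meets $\h$'' (the generalized-hyperbolic condition) and ``every hyperideal vertex has all real vertices in the interior of its polar half-space'' (the properness condition, open by Lemma \ref{lem:angid} and the discussion around $H_v$) are \emph{all open conditions}. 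Hence $\mathcal{P}_\Gamma$ is an open subset of $(\rp^{3*})^F$, in particular a smooth manifold of dimension $3F$.

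Next I would quotient by isometries. The isometry group of $\h$ in the projective model is $\mathrm{PO}(3,1)$, which acts on $(\rp^{3*})^F$ diagonally, and this action is smooth, proper and free on $\mathcal{P}_\Gamma$: properness because a polyhedron of type $\Gamma$ has at least $4$ faces in ``general enough'' position and isometries moving it a bounded amount form a compact set, and freeness (after the face-marking assumption, which kills combinatorial symmetries) because a nontrivial isometry fixing every face-plane of a nondegenerate polyhedron would fix four planes in general position and hence be the identity. Therefore $\mathcal{A}_\Gamma = \mathcal{P}_\Gamma / \mathrm{PO}(3,1)$ is a smooth manifold of dimension $3F - 6$.

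The main obstacle, and the place where the argument needs genuine care rather than generalities, is the claim that $\mathcal{P}_\Gamma$ is genuinely \emph{open} in $(\rp^{3*})^F$ — that is, that a small perturbation of the face-planes of a generalized hyperbolic polyhedron of type $\Gamma$ is still a polyhedron of the \emph{same} combinatorial type with all edges still meeting $\h$. For a \emph{bounded} convex polyhedron this is the classical fact that the combinatorial type of the intersection of half-spaces in general position is locally constant; the subtlety here is that our polyhedra are projective (possibly very large in the affine chart, with hyperideal vertices far outside $\overline{\h}$, and faces that may be ``infinite'' in the chart), so one must check that the local-constancy of the combinatorial type survives in $\rp^3$ and that the open conditions ``edge $\cap\, \h \neq \emptyset$'' and the properness inequalities are preserved. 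I would handle this by working in $\rp^3$ directly: the vertices are the (transverse) triple intersections of face-planes, and transversality plus the fact that $\Gamma$ is $3$-connected (so each vertex is cut out by exactly the planes of the faces around it) makes the vertex positions smooth functions of the planes on $\mathcal{P}_\Gamma$; then ``edge meets $\h$'', ``all real vertices interior to $H_v$'', and the combinatorial incidence relations are each manifestly open in the planes. Once openness is established the manifold structure is immediate, so I expect the entire proof to reduce to stating the dual parametrization, citing Steinitz/Bao–Bonahon-style transversality for local constancy of the combinatorial type, and verifying that the isometry action is smooth, free (using face-marking) and proper.
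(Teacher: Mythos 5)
There is a genuine gap in your argument, and it sits exactly at the point you flag as "the main obstacle": the claim that $\mathcal{P}_\Gamma$ is \emph{open} in $\left((\rp)^*\right)^F$ is false whenever $\Gamma$ has a vertex of valence $\geq 4$. In the dual (face-plane) parametrization, a vertex $v$ of valence $k$ requires the $k$ planes carrying the faces around $v$ to pass through a common point; for $k\geq 4$ this is a \emph{closed} condition of codimension $k-3$, and a generic small perturbation of the planes resolves $v$ into several trivalent vertices, changing the combinatorial type. You correctly observe that in the primal (vertex) parametrization the planarity of a face with $\geq 4$ sides is a codimension condition, but dualizing does not remove the problem — it merely transports it from large faces to high-valence vertices, and a general $3$-connected planar $\Gamma$ can have both. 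A symptom of the error is your dimension count $3F-6$: the correct dimension of $\mathcal{A}_\Gamma$ is $E$ (consistent with the dihedral angles being local coordinates, Theorem \ref{teo:loccoord}), and $3F-6=E$ only when every vertex of $\Gamma$ is trivalent.

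Consequently the real content of the proposition is not openness but a transversality statement: one must show that the closed incidence conditions imposed at the different non-simple vertices are independent, so that they cut out a smooth \emph{submanifold} (not an open subset) of $\left((\rp)^*\right)^F$. This is precisely what the paper delegates to Montcouquiol's theorem on the space of face-marked Euclidean polyhedra with prescribed $1$-skeleton, after which properness is an open condition on that submanifold. Your treatment of the quotient — the action of $\mathrm{PO}(3,1)$ being smooth, free (thanks to the face-marking) and proper — agrees with the paper and is fine; but as written your proposal proves the proposition only for simple $\Gamma$, and to repair it you would need to either invoke Montcouquiol's result or supply the independence argument for the vertex incidence conditions yourself.
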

\begin{proof}
Montcouquiol proved in \cite{mont} that face-marked Euclidean polyhedra in $\R^3$ with $1$-skeleton $\Gamma$ form a smooth submanifold of $((\mathbb{RP}^3)^*)^F$, with $F$ the number of faces of $\Gamma$. Since proper polyhedra are in natural 1-1 correspondence with an open subset of Euclidean polyhedra, they are also a smooth submanifold of $((\mathbb{RP}^3)^*)^F$. To conclude we notice that the action of the isometries of $\h$ on this space of polyhedra is free and proper, so that the quotient $\mathcal{A}_\Gamma$ is a manifold as well. 
\end{proof}

 Consider the dihedral angle map $\Theta:\mathcal{A}_\Gamma\ra \R^{\# \textrm{ of edges}}$ assigning to each polyhedron the tuple of dihedral angles of its edges. This is clearly a smooth map; it is also a local diffeomorphism at any polyhedron with no ideal vertices.

 \begin{teo}\cite[Theorem 19]{mont},\cite[Theorem 1.1]{weiss}\label{teo:loccoord}
  If $P$ is a compact hyperbolic polyhedron (i.e. $P$ has only real vertices), the dihedral angles are local coordinates for $\mathcal{A}_\Gamma$ around $P$.
 \end{teo}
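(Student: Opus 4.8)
The plan is to prove that at a compact polyhedron $P$ the differential $d\Theta_P$ is a linear isomorphism; since $\Theta$ is smooth, the inverse function theorem then shows that $\Theta$ restricts to a diffeomorphism of a neighborhood of $P$ onto an open subset of $\R^{E}$, with $E$ the number of edges of $\Gamma$, which is exactly the assertion that the dihedral angles are local coordinates near $P$. For this I first record that $\dim\mathcal A_\Gamma=E$: in the parametrization of $\mathcal A_\Gamma$ underlying the proof of Proposition \ref{prop:manifold}, a face-marked polyhedron is a tuple of $F$ planes in $(\rp)^*$, hence $3F$ parameters, cut out by the vertex-incidence conditions (each vertex $v$ contributing codimension $\deg v-3$, for a total of $\sum_v(\deg v-3)=2E-3V$) and then quotiented by the $6$-dimensional group of isometries, giving $\dim\mathcal A_\Gamma=3F-(2E-3V)-6=E$ by Euler's formula $V-E+F=2$. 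Since domain and target now have the same dimension, it suffices to prove that $d\Theta_P$ is injective.

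Injectivity of $d\Theta_P$ is precisely the \emph{infinitesimal rigidity of $P$ relative to its dihedral angles}: every first-order deformation of $P$ inside $\mathcal A_\Gamma$ along which all dihedral angles are stationary is trivial (deformations coming from ambient isometries have been divided out in passing to $\mathcal A_\Gamma$). This is the analytic core of the statement, and I would isolate it as a lemma. The classical route goes through duality: the polar dual of the compact polyhedron $P$ is a convex polyhedron $P^{*}$ in de Sitter space whose edge lengths are the exterior dihedral angles of $P$, so that fixing the dihedral angles of $P$ to first order fixes the edge lengths of $P^{*}$ to first order, and the claim becomes the infinitesimal rigidity of a convex de Sitter polyhedron with prescribed edge lengths -- a Cauchy--Alexandrov type statement proved by a sign-counting argument over the vertex links combined with the planarity of $\Gamma$ (this is in the spirit of Rivin--Hodgson). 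An alternative packaging applies the infinitesimal Pogorelov map in the projective model, turning the angle-preserving infinitesimal deformation of $P\subseteq\h$ into an infinitesimal deformation of an auxiliary convex Euclidean polyhedron and reducing the statement to classical Euclidean infinitesimal rigidity.

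The expected main obstacle is exactly this infinitesimal rigidity lemma; the dimension count, the reduction, and the inverse-function-theorem step are routine bookkeeping. It is worth noting that the hypothesis that $P$ be compact (in particular, have no ideal vertex) is essential and is visible already at the linear level: if $P$ had an ideal vertex $v$, then by Lemma \ref{lem:angid} the dihedral angles at the edges incident to $v$ would satisfy the closed linear relation $\sum_i\theta_i=(\deg v-2)\pi$, so near such a polyhedron the image of $\Theta$ would lie in a proper affine subspace of $\R^{E}$ and $\Theta$ could not be a local diffeomorphism. For the purposes of this paper it is enough to invoke Montcouquiol \cite[Theorem 19]{mont} and Weiss \cite[Theorem 1.1]{weiss}, where the required rigidity is established in the stated generality.
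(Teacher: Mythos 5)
The paper itself does not prove this statement: it is imported directly from Montcouquiol and Weiss, so there is no internal argument to compare yours against. Your surrounding bookkeeping is correct and matches how the cited theorem is normally understood: the dimension count $\dim\mathcal{A}_\Gamma=3F-(2E-3V)-6=E$ is right, and the reduction via the inverse function theorem to the injectivity of $d\Theta_P$ (infinitesimal rigidity with respect to dihedral angles) correctly isolates the analytic core. Your remark on why compactness is needed (an ideal vertex forces the image of $\Theta$ into an affine hyperplane by Lemma \ref{lem:angid}) is also a good sanity check.

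The one point where you overreach is the suggestion that the rigidity lemma is ``a Cauchy--Alexandrov type statement proved by a sign-counting argument over the vertex links'' applied to the polar dual in de Sitter space. That is essentially the Rivin--Hodgson argument, and it needs geometric control on the dual de Sitter polyhedron (equivalently, restrictions on the dihedral angles of $P$, such as non-obtuseness) for the sign conditions in the Cauchy counting to hold. This paper explicitly insists on admitting obtuse angles --- see the discussion opening Section \ref{sec:maxvol} --- and in that generality the injectivity of $d\Theta_P$ is the infinitesimal Stoker conjecture, which resisted exactly these elementary dualization arguments and was eventually proved by Montcouquiol (with Mazzeo) and by Weiss using Hodge-theoretic analysis on cone-manifolds. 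The same caveat applies to the infinitesimal Pogorelov map packaging. Since you ultimately defer to \cite{mont} and \cite{weiss} for the rigidity in the stated generality, your proposal stands as a citation wrapped in correct reductions, but the ``classical route'' should not be presented as if it closes the argument; as written it would fail precisely on the class of polyhedra this paper cares about.
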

 
 \begin{cor}\label{cor:loccoord}
  If $P$ is a proper polyhedron with no ideal vertices, the dihedral angles are local coordinates for $\mathcal{A}_\Gamma$ around $P$.
 \end{cor}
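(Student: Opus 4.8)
The plan is to deduce the corollary from Theorem \ref{teo:loccoord} by passing to the truncation. Let $P$ be a proper polyhedron with no ideal vertices and let $P'$ be its truncation. Since $P$ is proper and has no ideal vertices, every vertex of $P'$ is real: the real vertices of $P$ lie in the interior of each polar half space $H_v$ and are thus untouched, while the vertices created by the truncation lie on the edges of $P$, which meet $\h$ transversely at interior points. Each truncation face is then a compact right-angled hyperbolic polygon, and distinct truncation faces lie in disjoint planes \cite[Lemma 4]{bonbao}, so $P'$ is a genuine \emph{compact} hyperbolic polyhedron. Its $1$-skeleton $\Gamma'$ is obtained from $\Gamma$ by replacing each hyperideal vertex of degree $k$ with a $k$-gon, so $\Gamma'$ is again $3$-connected and planar and the number of edges of $\Gamma'$ equals that of $\Gamma$ plus $\sum_{v\ \mathrm{hyperideal}}\deg(v)$. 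Note also that having no ideal vertices and being proper are open conditions (Lemma \ref{lem:angid}), so every polyhedron in $\mathcal{A}_\Gamma$ near $P$ has the same set of hyperideal vertices and can be truncated in the same combinatorial fashion.

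Next I would identify a neighbourhood $\mathcal{U}$ of $P$ in $\mathcal{A}_\Gamma$ with a submanifold $\mathcal{V}$ of a neighbourhood of $P'$ in $\mathcal{A}_{\Gamma'}$. Truncation sends a proper polyhedron near $P$ to a compact polyhedron near $P'$ in which all the edges bounding the truncation faces have dihedral angle $\frac{\pi}{2}$ (the remark on truncation edges in Section \ref{sec:volconj}); let $\mathcal{V}$ be the set of such polyhedra, an open subset of $\{\,\Theta\equiv\tfrac{\pi}{2}\text{ on the new edges}\,\}\subseteq\mathcal{A}_{\Gamma'}$. The inverse map is ``remove the truncation faces'', which makes sense because of polarity: two hyperbolic planes are orthogonal if and only if each contains the pole of the other, so for $Q\in\mathcal{V}$ and each truncation face $T$ of $Q$, the faces of $Q$ adjacent to $T$ are orthogonal to the plane $\mathrm{span}(T)$ and hence all pass through its pole $v_Q$ (which lies strictly outside $\overline{\h}$ since $T$ is a nondegenerate polygon in $\h$); deleting all truncation faces therefore produces a projective polyhedron $\widehat{Q}$ with $1$-skeleton $\Gamma$, hyperideal at each reconstructed corner and with no ideal vertices. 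Since being a proper generalized hyperbolic polyhedron with no ideal vertices is an open condition satisfied by $P=\widehat{P'}$, we get $\widehat{Q}\in\mathcal{A}_\Gamma$ for $Q$ near $P'$, and truncation and un-truncation are mutually inverse smooth maps (cf.\ Figure \ref{fig:recover}), yielding a diffeomorphism $\mathcal{U}\cong\mathcal{V}$.

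Finally I would transport coordinates. Under $\mathcal{U}\cong\mathcal{V}$ the dihedral angle map of $\Gamma$ corresponds to the dihedral angle map $\Theta$ of $\Gamma'$ restricted to $\mathcal{V}$, followed by the projection that forgets the new-edge coordinates (constantly $\frac{\pi}{2}$ on $\mathcal{V}$). By Theorem \ref{teo:loccoord} applied to the compact polyhedron $P'$, the full tuple of dihedral angles is a local coordinate system on $\mathcal{A}_{\Gamma'}$ near $P'$; restricting to the submanifold $\mathcal{V}=\{\text{new angles}=\tfrac{\pi}{2}\}$, the remaining angles — those at the edges of $\Gamma$ — restrict to local coordinates on $\mathcal{V}$. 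Pulling back along $\mathcal{U}\cong\mathcal{V}$ shows the dihedral angles are local coordinates for $\mathcal{A}_\Gamma$ around $P$.

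I expect the main obstacle to be the verification in the second paragraph that $\widehat{Q}$ is a bona fide non-degenerate polyhedron with $1$-skeleton exactly $\Gamma$ for all $Q$ near $P'$ in $\mathcal{V}$, and that the reconstructed hyperideal corners do not collide with each other or with the real vertices. The polarity fact disposes of the ``the adjacent faces close up at a single point'' issue essentially for free, so what remains is genuinely just openness of the conditions cutting out $\mathcal{A}_\Gamma$ near $P$ (Proposition \ref{prop:manifold}), which is exactly where properness — real vertices strictly inside the polar half spaces, truncation planes pairwise disjoint — is used.
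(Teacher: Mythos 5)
Your proposal is correct and follows essentially the same route as the paper: truncate $P$ to a compact polyhedron, apply Theorem \ref{teo:loccoord} to it, and observe that the dihedral angles of $P$ restrict to local coordinates on the slice of $\mathcal{A}_{\Gamma'}$ where the truncation edges are right-angled, which is identified with a neighbourhood of $P$ in $\mathcal{A}_\Gamma$ by undoing the truncation. The paper's proof is just a terser version of yours; your extra verifications (polarity recovering the hyperideal vertices, openness of properness) are the details it leaves implicit.
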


\begin{proof}
 Take $P^0$ the truncation of $P$, and denote with $\Gamma^0$ its $1$-skeleton. Since $P^0$ is compact the dihedral angles are local coordinates for $\mathcal{A}_{\Gamma^0}$ around $P^0$; the dihedral angles of $P^0$ are either $\frac{\pi}{2}$ (at the edges lying on truncation faces) or those of $P$ (at the remaining edges). Then the dihedral angles of $P$ form a local set of coordinates for the subset of $\mathcal{A}_{\Gamma^0}$ of polyhedra with right angles at the edges arising from truncation, and any polyhedron in this subset is going to be the truncation of a proper polyhedron in $\mathcal{A}_\Gamma$ close to $P$.
\end{proof}

\begin{dfn}
 The \emph{closure} of $\mathcal{A}_\Gamma$, denoted with $\overline{\mathcal{A}_\Gamma}$, is the topological closure of the space of proper polyhedra with $1$-skeleton $\Gamma$ (as a subset of $((\mathbb{RP}^3)^*)^F$), quotiented by isometries. As customary the \emph{boundary} of $\mathcal{A}_\Gamma$ is $\overline{\mathcal{A}_\Gamma}\backslash \mathcal{A}_\Gamma$ and is denoted with $\partial \mathcal{A}_\Gamma$. We make no claim that $\overline{\mathcal{A}_\Gamma}$ is a manifold; even if it were, its boundary as a manifold would not necessarily be $\partial\mathcal{A}_\Gamma$.
 We say that a sequence $P_n\in\mathcal{A}_\Gamma$ converges to $P\in\overline{\mathcal{A}_\Gamma}$ if it converges in the topology of $\overline{\mathcal{A}_\Gamma}$.
\end{dfn}

We will also need local coordinates for certain parts of $\partial{\mathcal{A}_\Gamma}$; this is provided by the following corollary.

\begin{cor}\label{cor:locbad}
If $P$ is an almost proper polyhedron with $1$-skeleton $\Gamma$ and no ideal vertices, it lies in the closure of $\mathcal{A}_\Gamma$. Moreover, if $\vec{\theta}=(\theta_1,\dots,\theta_e)$ are the dihedral angles of the proper edges of $P$, and $\overrightarrow{\theta'}=(\theta_1',\dots,\theta_e')$ is close enough to $\vec{\theta}$, then there is a unique almost proper polyhedron with $1$-skeleton $\Gamma$ close to $P$ in $\overline{\mathcal{A}_\Gamma}$ with the same almost proper vertices and with angles $\overrightarrow{\theta'}$.
\end{cor}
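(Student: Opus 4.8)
The plan is to reduce both assertions to Theorem \ref{teo:loccoord} applied to a suitable compact polyhedron, along the lines of the proof of Corollary \ref{cor:loccoord}; the genuinely new points are an explicit approximating sequence for the first assertion and some combinatorial care at the almost proper vertices for the second.

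To see that $P\in\overline{\mathcal{A}_\Gamma}$, I would deform $P$ by the homotheties $\Psi_t\colon x\mapsto tx$ centered at the origin, with $t\in(0,1)$ close to $1$. Being an affine homeomorphism of $\R^3$, $\Psi_t$ preserves the combinatorial type, so $\Psi_t(P)$ is a projective polyhedron with $1$-skeleton $\Gamma$; since $\Psi_t$ sends $\overline{\h}$ strictly into $\h$, every edge of $\Psi_t(P)$ still meets $\h$, no real vertex becomes ideal, and for $t$ close enough to $1$ no hyperideal vertex becomes ideal either. For each hyperideal vertex $v$ of $P$, the point $\Psi_t(v)$ lies on the segment from $v$ to the origin, hence in the tangent cone of $v$ to $\partial\h$; and every real vertex $w$ of $P$ lies in $H_v$ because $P$ is almost proper, so Lemma \ref{lem:unproper} gives that $\Psi_t(w)$ lies in the interior of $H_{\Psi_t(v)}$. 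Hence $\Psi_t(P)$ is proper, and since $\Psi_t(P)\to P$ in $((\mathbb{RP}^3)^*)^F$ as $t\to1^-$, this shows $P\in\overline{\mathcal{A}_\Gamma}$.

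For the second assertion, let $P^0$ be the truncation of $P$ and $\Gamma^0$ its $1$-skeleton; since $P$ has no ideal vertices, $P^0$ is compact and $\Gamma^0$ is $3$-connected. The only difference from the situation of Corollary \ref{cor:loccoord} occurs at an almost proper vertex $v\in\Pi_{v'}$: truncating at $v'$ collapses the almost proper edge $\overrightarrow{vv'}$ to the single point $v$, so $v$ becomes an ordinary real vertex of $P^0$ lying on the truncation face $F_{v'}=P\cap\Pi_{v'}$ (indeed a vertex of the polygon $F_{v'}$, at which the two faces of $P$ through $\overrightarrow{vv'}$ meet $\Pi_{v'}$). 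After recording this, one checks that the non-truncation edges of $\Gamma^0$ are exactly the proper edges of $P$, so there are $e$ of them, and that the dihedral angles of $P^0$ are $\tfrac\pi2$ on the truncation edges and $\vec{\theta}$ on the remaining ones (using the remark that truncation edges are right-angled for almost proper polyhedra too). By Theorem \ref{teo:loccoord} the dihedral angles are local coordinates on $\mathcal{A}_{\Gamma^0}$ around the compact polyhedron $P^0$; therefore the set $S$ of polyhedra in $\mathcal{A}_{\Gamma^0}$ near $P^0$ all of whose truncation edges have angle $\tfrac\pi2$ is a submanifold smoothly parametrized by the remaining $e$ dihedral angles, that is, by $\overrightarrow{\theta'}$ near $\vec{\theta}$.

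It then remains to identify $S$ with the almost proper polyhedra in question. Undoing the truncation of a $Q\in S$ — removing its truncation faces and extending the adjacent faces, which meet at the polar point of each truncation plane precisely because they are orthogonal to it — produces a generalized hyperbolic polyhedron $P'$ with $1$-skeleton $\Gamma$, with the same hyperideal vertices as $P$ (and no ideal vertices, by continuity); moreover $v$ stays a vertex of the face $F_{v'}$ of $Q$ along $S$, hence stays on $\Pi_{v'}$ in $P'$, so $P'$ is almost proper with the same almost proper vertices as $P$. Conversely the truncation of any almost proper polyhedron with $1$-skeleton $\Gamma$ close to $P$ and with the same almost proper vertices lies in $S$, and by the remark that the truncation together with the marking of the truncation faces determines the polyhedron, these two operations are mutually inverse and continuous; uniqueness of $P'$ with prescribed $\overrightarrow{\theta'}$ therefore follows from uniqueness in $S$. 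I expect the only real friction to be the combinatorial bookkeeping just sketched: checking that $P^0$ is a genuine non-degenerate compact polyhedron with the stated $1$-skeleton $\Gamma^0$ (so that Theorem \ref{teo:loccoord} applies), dealing with possible degenerate sub-cases (a vertex on several truncation faces, or an edge contained in a truncation face), and verifying that no new almost proper vertices appear for $P'$ — the last being immediate since ``every proper real vertex lies strictly on the origin side of each truncation plane'' is an open condition holding at $P^0$. Everything else is either invariance under a homeomorphism or imported from Theorem \ref{teo:loccoord}.
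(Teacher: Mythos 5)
Your overall strategy---shrink $P$ by a homothety and invoke Lemma \ref{lem:unproper} for the first assertion, pass to the compact truncation $P^0$ and apply Theorem \ref{teo:loccoord} for the second---is exactly the paper's. The first assertion is handled correctly and essentially verbatim as in the paper.

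In the second assertion there is one concrete error, and it sits precisely where the paper's proof does its real work. You assert that the dihedral angles of $P^0$ are $\frac{\pi}{2}$ on the truncation edges and $\vec{\theta}$ on the remaining ones. This fails for a proper edge of $P$ that is entirely contained in a truncation plane $\Pi_{v'}$, which happens exactly when both of its endpoints are almost proper with respect to the same hyperideal vertex $v'$; the paper explicitly allows this (``it could happen that an entire edge of $P$ lies in a truncation plane''), and such configurations do occur in the degenerations to which the corollary is later applied. For such an edge the plane $\Pi_{v'}$ contains the line of the edge and cuts into the wedge between the two adjacent faces, removing a right-angled slice (the adjacent face through $v'$ is orthogonal to $\Pi_{v'}$), so the dihedral angle of that edge in $P^0$ is $\theta_i-\frac{\pi}{2}$, not $\theta_i$. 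You do list ``an edge contained in a truncation face'' among the sub-cases to be dealt with, but your stated angle bookkeeping is wrong in that sub-case rather than merely unverified: applied literally, your parametrization would produce a polyhedron with the wrong dihedral angles whenever two adjacent almost proper vertices share a truncation plane. The paper's proof is organized around exactly this point: it separates the proper edges lying in truncation planes (angles $\theta_1-\frac{\pi}{2},\dots,\theta_l-\frac{\pi}{2}$ in $P^0$) from the rest, applies Theorem \ref{teo:loccoord} with the shifted angles, and checks that regluing the cone over each truncation face adds $\frac{\pi}{2}$ back to precisely those edges while the $\frac{\pi}{2}+\frac{\pi}{2}$ truncation edges disappear. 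The repair is harmless---the shift $\theta_i\mapsto\theta_i-\frac{\pi}{2}$ is a diffeomorphism in those coordinates, so local parametrization by $\overrightarrow{\theta'}$ survives---but it needs to be made.
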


\begin{proof}
 To show the fact that $P\in\overline{\mathcal{A}_\Gamma}$ we need to exhibit a family of proper polyhedra with $1$-skeleton $\Gamma$ converging to $P$ in $\left({(\rp)}^*\right)^F$. To do this apply an isometry so that $0\in P$, and consider $\Phi_\lambda:\R^3\ra \R^3$ the multiplication by $\lambda$. Then for $\lambda\in(1-\epsilon,1]$ the polyhedron $\Phi_\lambda(P)$ is proper by Lemma \ref{lem:unproper}, has $1$-skeleton $\Gamma$ and converges to $P$ as $\lambda\ra1$. To see that $\Phi_\lambda(P)$ is proper notice that for every hyperideal vertex $v$, $\Phi_\lambda(v)$ is contained in the tangent cone of $v$ to $\partial\h$, and we can conclude by applying Lemma \ref{lem:unproper}.
 
 To show the second assertion, reorder the indices so that $\theta_1,\dots,\theta_l$ are the angles of the proper edges that are contained in some truncation plane. Then $P_0$, the truncation of $P$, is compact, has $1$-skeleton $\Gamma_0$ and dihedral angles $\theta_1-\frac{\pi}{2},\dots,\theta_l-\frac{\pi}{2},\theta_{l+1},\dots,\theta_e,\frac{\pi}{2},\dots,\frac{\pi}{2}$. By Theorem \ref{teo:loccoord}, if $\theta_1',\dots,\theta'_e$ are sufficiently close to $\theta_1,\dots,\theta_e$ there is a unique $Q_0$ (up to isometry) close to $P_0$ with $1$-skeleton $\Gamma_0$ and angles $\theta'_1-\frac{\pi}{2},\dots,\theta'_l-\frac{\pi}{2},\theta'_{l+1},\dots,\theta'_e,\frac{\pi}{2},\dots,\frac{\pi}{2}.$ Some faces of $Q_0$ correspond to truncation faces of $P_0$; if we glue to $Q_0$ the convex hull of a truncation face and its dual point, we undo the truncation (see Figure \ref{fig:improperdef}). Notice that the angles at the edges that are glued are either $\theta'_i-\frac{\pi}{2}+\frac{\pi}{2}=\theta'_i$ if $i\leq l$, or $\frac{\pi}{2}+\frac{\pi}{2}$ otherwise (hence the edge in this case disappears).
 
 If we undo every truncation in this manner, we obtain an almost proper polyhedron $Q$ with $1$-skeleton $\Gamma$ and angles $\theta'_1,\dots,\theta'_e$. To see that $Q$ is close to $P$ notice that $Q_0$ is close to $P_0$, which means that all the faces of $Q_0$ are close to the corresponding faces of $P_0$; but the faces of $P$ and $Q$ depend continuously on the faces of $P_0$ and $Q_0$.
 
 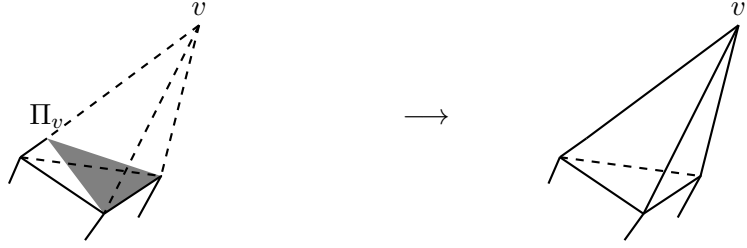
\begin{figure}
 \centering
 \begin{minipage}{.4\textwidth}
     \begin{tikzpicture}
\centering
\fill[fill=gray](3,3.5)node[above]{$\Pi_{v}$}--(4.5,3)--(3.75,2.5);
\draw[thick,dashed](5,5)node[above]{$v$}--(4.5,3);
\draw[thick](4.2,2.45)--(4.5,3);
\draw [thick,dashed](5,5)--(3,3.5);
\draw[thick](2.65,3.25)--(2.5,2.9);
\draw[thick](3,3.5)--(2.65,3.25);
\draw[thick](4.5,3)--(4.5,3);
\draw[thick,dashed] (5,5)--(3.75,2.5);
\draw[thick] (3.5,2.15)--(3.75,2.5);
\draw[thick] (4.5,3)--(3.75,2.5);
\draw[thick](3.75,2.5)--(2.65,3.25);
\draw[thick,dashed](2.65,3.25)--(4.5,3);
\end{tikzpicture}
 \end{minipage}
$\longrightarrow$
 \begin{minipage}{.4\textwidth}
  \centering
     \begin{tikzpicture}
\centering
\draw[thick](5,5)node[above]{$v$}--(4.5,3);
\draw[thick](4.2,2.45)--(4.5,3);
\draw [thick](5,5)--(3,3.5);
\draw[thick](2.65,3.25)--(2.5,2.9);
\draw[thick](3,3.5)--(2.65,3.25);
\draw[thick](4.5,3)--(4.5,3);
\draw[thick] (5,5)--(3.75,2.5);
\draw[thick] (3.5,2.15)--(3.75,2.5);
\draw[thick] (4.5,3)--(3.75,2.5);
\draw[thick](3.75,2.5)--(2.65,3.25);
\draw[thick,dashed](2.65,3.25)--(4.5,3);
\end{tikzpicture}
 \end{minipage}
 \caption{Removing a truncation face to recover an almost proper polyhedron}\label{fig:improperdef}
\end{figure}
 \end{proof}

 \subsection{Convergence of polyhedra}
 
 As we have seen in Subsection \ref{sec:polspace}, a proper polyhedron $P$ is naturally an element of $\left((\rp)^*\right)^F$ where $F$ is the number of faces of $P$. Therefore when we say that a sequence $P_n$ of polyhedra with $1$-skeleton $\Gamma$ converges to $P\in\left((\rp)^*\right)^F$ (or has $P$ as an accumulation point) we mean in the topology of $\left((\rp)^*\right)^F$. Since $\rp$ is compact, any such sequence $P_n$ has an accumulation point $P\in\left((\rp)^*\right)^F$. Each component of $P_n$ is a plane in $\rp$, and $P_n$ selects one of the two hyperspaces on either side of it. Therefore if $P_n\ra P$ then $P$ can still be interpreted as a convex intersection of half-spaces, however it could stop being a projective polyhedron if it is contained in a plane or not contained in an affine chart. 
 
 Analogously, we will consider sequences of $k$-gons $A_n\subseteq\mathbb{H}^2$, which are naturally elements of $\left((\mathbb{RP}^2)^*\right)^k$; when we say that they converge to a polygon $A$, we mean in the topology of this space, with the same considerations we made for polyhedra.
 
 If the limit point $P$ is a projective polyhedron, each of its vertices is a limit of some vertices of $P_n$, similarly every line containing an edge of $P$ is the limit of some lines containing edges of $P_n$ and every plane containing a face of $P$ is the limit of some planes containing a face of $P_n$ respectively. However some vertices of $P_n$ could converge to points of $P$ which are not vertices; similarly some edge of $P_n$ could collapse to a point or converge to a segment which is not an edge, and a face of $P_n$ could collapse to an edge or a point.
 
 Furthermore in some degenerate cases, even though $P_n$ converges to $P$, some of its vertices do not converge; for example, if $v_n$ lies on three faces of $P_n$ that become coplanar, the sequence $v_n$ could have accumulation points anywhere on the limit face. Throughout the paper we are almost always going to be concerned with sequences of polyhedra with angles which are decreasing and bounded away from $0$; in this case the situation is somewhat nicer.
 
 \textbf{Notation:} throughout the rest of the paper we consider sequences of polyhedra $P_n$ with the same $1$-skeleton $\Gamma$. Their boundary will always be equipped with a fixed isomorphism to the pair $(S^2,\Gamma)$; because of Remark \ref{rem:unique}, there is essentially a unique way to do this, so it will be not explicitly defined. If we consider a sequence of vertices $v_n\in P_n$ we always assume that each $v_n$ is the vertex of $P_n$ corresponding to a fixed vertex $v$ of $\Gamma$; the same with a sequence of edges or faces.
 
 \begin{lem}\label{lem:coll1}
  Let $P_n$ be a sequence of proper or almost proper polyhedra with $1$-skeleton $\Gamma$ and with angles bounded away from $0$ and $\pi$, and converging to the projective polyhedron $P$. If $\Pi^1_n\neq \Pi^2_n$ are planes containing faces of $P_n$ converging to the planes $\Pi^1,\Pi^2$ containing faces $F^1,F^2$ of $P$, then $\Pi^1\neq \Pi^2$.
 \end{lem}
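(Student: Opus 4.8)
I would argue by contradiction, assuming $\Pi^1=\Pi^2=:\Pi$. The first step is a reduction. Since each $\Pi^i_n$ contains a face of $P_n$, it is a supporting plane of $P_n$, so the limit $\Pi$ is a supporting plane of $P$; let $H$ be the closed half-space bounded by $\Pi$ that contains $P$, with outward unit normal $\nu$. If $H^i_n$ denotes the half-space bounded by $\Pi^i_n$ containing $P_n$, then $H^1_n$ and $H^2_n$ converge to half-spaces bounded by $\Pi$, both containing $P$; as $P$ is non-degenerate (it has non-empty interior) both limits equal $H$, and hence the outward unit normals of $F^1_n$ and of $F^2_n$ both converge to $\nu$. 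Now let $\mathcal{S}$ be the set of faces $\phi$ of $\Gamma$ whose plane in $P_n$ converges to $\Pi$ (all face-planes converge, since $P_n\to P$ in $((\rp)^*)^F$). Then $F^1,F^2\in\mathcal{S}$, and $F^1\neq F^2$ because $\Pi^1_n\neq\Pi^2_n$ forces the corresponding facets to be distinct; so $|\mathcal{S}|\geq 2$, and the whole game is to contradict this.

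Next I would observe that \emph{$\mathcal{S}$ contains no two faces adjacent in $\Gamma$}. Indeed, if $\phi,\phi'\in\mathcal{S}$ share an edge $e$, the two facets of $P_n$ meeting along (the edge corresponding to) $e$ have planes converging to $\Pi$ and lie on the same side of it, so their outward normals both converge to $\nu$; since for a convex polyhedron the interior dihedral angle at an edge is $\pi$ minus the angle between the two outward face-normals, the dihedral angle of $P_n$ along that edge converges to $\pi$, contradicting that the angles are bounded away from $\pi$. Thus $\mathcal{S}$ is an independent set of $\Gamma^*$.

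The technical heart is to show, conversely, that \emph{$\mathcal{S}$ is connected in $\Gamma^*$}. For $\epsilon>0$ let $\mathcal{U}^\epsilon_n$ be the set of faces $\phi$ of $\Gamma$ whose facet in $P_n$ has outward normal within angle $\epsilon$ of $\nu$. I would use the (standard, convexity-based) fact that the facets of a convex polyhedron whose outward normals lie in a fixed spherically convex region of $S^2$ form a connected subgraph of the dual $1$-skeleton — morally the part of $\partial P_n$ ``visible'' from directions near $\nu$ — so $\mathcal{U}^\epsilon_n$ is connected in $\Gamma^*$; and $\mathcal{S}\subseteq\mathcal{U}^\epsilon_n$ for all $n$ large (depending on $\epsilon$). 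Fix $\phi,\phi'\in\mathcal{S}$; for each $\epsilon$ and $n$ large, choose a path in $\Gamma^*$ from $\phi$ to $\phi'$ inside $\mathcal{U}^\epsilon_n$. Since $\Gamma^*$ is a finite graph, a single path $\gamma$ recurs for a sequence $\epsilon_k\to 0$ with indices $n_k\to\infty$, so every face $\psi$ on $\gamma$ has the outward normal $\nu^\psi_{n_k}$ of its facet within $\epsilon_k$ of $\nu$, i.e.\ $\nu^\psi_{n_k}\to\nu$. But the plane of that facet is $\{x:\langle x,\nu^\psi_n\rangle=\max_{y\in P_n}\langle y,\nu^\psi_n\rangle\}$, which converges to $\{x:\langle x,\nu\rangle=\max_{y\in P}\langle y,\nu\rangle\}=\Pi$; since the plane of $\psi$ in $P_n$ converges (to its coordinate in $P$), that limit must be $\Pi$, so $\psi\in\mathcal{S}$. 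Hence $\gamma\subseteq\mathcal{S}$, and $\mathcal{S}$ is connected. A non-empty connected independent subgraph has a single vertex, so $|\mathcal{S}|=1$, contradicting $|\mathcal{S}|\geq 2$.

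The main obstacle is precisely the connectedness step, and the reason it is not immediate: a face \emph{not} in $\mathcal{S}$ can have its plane in $P_n$ converge to a plane \emph{parallel} to $\Pi$, so its normal stays close to $\nu$ without the face belonging to $\mathcal{S}$ — hence one cannot just fix a small cone of normals. The device that rescues this is the pigeonhole over the finite graph $\Gamma^*$ combined with the fact that the ``level'' of a facet is pinned by $\max_{P_n}\langle\,\cdot\,,\nu^\psi_n\rangle\to\max_{P}\langle\,\cdot\,,\nu\rangle$, which forces the recurring path to lie in $\mathcal{S}$. I would also make sure to pin down carefully the cited fact that facets with outward normal in a spherically convex region are connected in the dual skeleton (this, and the half-space convergence in the reduction, are where non-degeneracy of $P$ and convexity of the $P_n$ are genuinely used).
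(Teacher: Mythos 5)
Your reduction and your independence step are sound (and your observation that the two limiting half-spaces must coincide because $P$ is non-degenerate, so the dihedral angle tends to $\pi$ rather than ``$0$ or $\pi$'', is cleaner than what the paper itself writes). The gap is in the connectedness step: the ``standard, convexity-based fact'' you invoke is false. The set of facets of a convex polytope whose outward normals lie in a fixed spherically convex region --- even a small round cap --- need not induce a connected subgraph of the dual $1$-skeleton. Concretely, let $v$ be a $4$-valent vertex of a polytope whose normal cone meets $S^2$ in a convex spherical quadrilateral $q_1q_2q_3q_4$ with $q_1,q_3$ close together and $q_2,q_4$ far from both (take the image on the sphere of the planar convex quadrilateral $(0,0),(1,-10),(2,0),(1,1.5)$, scaled down, and cap the tangent cone off with one extra facet whose normal is far away). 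The facets $F_1,F_3$ with normals $q_1,q_3$ meet only at $v$, hence are not adjacent in $\Gamma^*$, yet the small cap centred at the midpoint of $q_1q_3$ contains $q_1,q_3$ and no other facet normal; so $\mathcal{U}^\epsilon_n$ can equal $\{F_1,F_3\}$ and be disconnected. Note also that the side lengths of this quadrilateral --- which are exactly the exterior dihedral angles at the four edges through $v$ --- stay bounded away from $0$ and $\pi$, so the angle hypothesis of the lemma does not rescue the claim.

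This is not a peripheral technicality: the configuration your argument fails to exclude --- two faces of $P_n$ sharing only a vertex, both of whose supporting planes converge to $\Pi$ while the other faces at that vertex stay transverse --- is precisely one of the degenerations the lemma must rule out, and as written your proof says nothing about it. The paper handles the non-adjacent case by a global convexity argument instead: since a convex body meets a supporting plane in a single face, $F^1$ and $F^2$ lie in the one face $P\cap\Pi$ of the limit, which must be covered by limits of faces of $P_n$ whose planes tend to $\Pi$; this forces a chain of \emph{pairwise adjacent} faces of $P_n$ with planes tending to $\Pi$, reducing to the adjacent case. If you want to keep your Gauss-map framework, you would need to replace cap-connectivity by a true statement (e.g.\ connectivity of the facets whose normals lie in an open half-sphere, i.e.\ the part of $\partial P_n$ visible from a fixed direction) and then separately discard the facets whose planes converge to planes parallel to, but distinct from, $\Pi$; your ``level'' argument does exactly that discarding, but only \emph{after} a connecting path through candidate faces has been produced, which is the step that is missing.
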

\begin{proof}
Suppose by contradiction that $\Pi^1=\Pi^2$. If $\Pi_1^n$ and $\Pi_2^n$ share an edge of $P_n$ this would imply that the dihedral angle at this edge would either converge to $\pi$ or $0$ which is a contradiction.
If instead $\Pi_1^n$ and $\Pi_2^n$ do not share an edge in $P_n$, still there must be some other $\Pi_3^n$ containing a face of $P_n$ such that $\Pi_1^n$ and $\Pi_3^n$ must share an edge and $\Pi_3^n\ra \Pi_1$; otherwise, if every plane containing a face adjacent to $\Pi_1^n$ converged to a plane different from $\Pi_1$, then $P$ would not be convex.
\end{proof}

\begin{cor}\label{cor:edgeconv}
 With the same hypotheses of Lemma \ref{lem:coll1}, if $v_n$ is a sequence of vertices of $P_n$, then it cannot converge to (or have an accumulation point in) an internal point of a face of $P$. Similarly, any accumulation point of a sequence of edges $e_n$ of $P_n$ cannot intersect the interior of any face of $P$.
\end{cor}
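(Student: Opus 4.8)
The plan is to argue by contradiction in both cases and to reduce everything to Lemma~\ref{lem:coll1}. Consider first the statement about vertices, and suppose that, after passing to a subsequence, $v_n\to x$ with $x$ in the interior of a face $F$ of $P$ (an accumulation point being by definition the limit of some subsequence); let $\Pi$ be the plane containing $F$, and fix an affine chart containing $P$. Since $\Gamma$ is $3$-connected, the vertex $v$ of $\Gamma$ to which all the $v_n$ correspond has degree $k\geq 3$, so each $v_n$ is incident to $k$ pairwise distinct faces $f^1_n,\dots,f^k_n$ of $P_n$, lying in $k$ distinct planes $\Pi^1_n,\dots,\Pi^k_n$. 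Passing to a further subsequence we may assume $\Pi^i_n\to\Pi^i$ in $(\rp)^*$ for every $i$; since $v_n\in\Pi^i_n$ and incidence is a closed condition, $x\in\Pi^i$. Moreover $P_n$ lies in the half-space $H^i_n$ bounded by $\Pi^i_n$, so in the limit $P\subseteq H^i$, a half-space with $\partial H^i=\Pi^i$.

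The heart of the argument is the local claim that $\Pi^i=\Pi$ for every $i$. Take a small ball $B$ centered at $x$. Because $x$ lies in the relative interior of the $2$-dimensional face $F$, the set $B\cap P$ is a half-ball, equal to $B\cap H$ where $H$ is the half-space bounded by $\Pi$ containing $P$, and $B\cap\partial P=B\cap\Pi$. On the other hand $B\cap P\subseteq B\cap H^i$, and since $x\in\partial H^i$ the set $B\cap H^i$ is again a half-ball. A half-ball contained in a half-ball with the same center forces the two bounding planes and the two chosen sides to coincide --- a one-line affine check, reflecting through $x$ inside $\Pi$. Hence $\Pi^i=\Pi$ for all $i$; in particular each $\Pi^i$ is a plane containing a face of $P$, namely $F$. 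Since $k\geq 3$, we get two distinct planes $\Pi^1_n\neq\Pi^2_n$ containing faces of $P_n$ that both converge to $\Pi^1=\Pi^2=\Pi$, contradicting Lemma~\ref{lem:coll1}; this proves the vertex statement.

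For the statement about edges one repeats the same scheme: if a subsequence of the edges $e_n$ (all corresponding to a fixed edge $e$ of $\Gamma$) converges to a segment $\sigma\subseteq\rp$ meeting the interior of a face $F$ of $P$ at a point $y$, use that $e_n$ is incident to exactly two faces of $P_n$, lying in distinct planes $\Pi^1_n\neq\Pi^2_n$, that $\sigma\subseteq\Pi^i$ after passing to a subsequence (hence $y\in\Pi^i$), and run the same local argument at $y$ to conclude $\Pi^1=\Pi^2=\Pi$, again contradicting Lemma~\ref{lem:coll1}. The only non-formal point, and thus the step I expect to be the main obstacle, is the local claim: it is exactly there that one uses that $x$ (or $y$) is an \emph{interior} point of a $2$-face, so that near it $P$ agrees with a single half-space, together with the inclusions $P\subseteq H^i$ coming from convexity; everything else is compactness of $((\rp)^*)^F$, closedness of incidence, and bookkeeping of faces through the fixed identification of $\partial P_n$ with $(S^2,\Gamma)$.
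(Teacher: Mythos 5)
Your proof is correct and follows essentially the same route as the paper's: the paper's (much terser) argument is exactly that all faces of $P_n$ incident to $v_n$ (respectively, the two faces containing $e_n$) would become coplanar in the limit, contradicting Lemma \ref{lem:coll1}. Your half-ball argument simply makes explicit why the limit planes must all coincide with the plane of $F$, which the paper leaves implicit.
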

\begin{proof}
 If $v_n$ converged to (or had an accumulation point in) an internal point of a face of $P$, then all faces of $P_n$ containing $v_n$ would become coplanar in the limit, contradicting Lemma \ref{lem:coll1}.
 
 Consider now a sequence of edges $e_n$: following the same reasoning, if $e_n$ converged (or has an accumulation point) to an edge that intersected the interior of a face of $P$, the two faces of $P_n$ containing $e_n$ would become coplanar in the limit.
\end{proof}

\begin{cor}
If $v_n$ converges to an internal point of an edge, then the accumulation points of all edges that have $v_n$ as endpoints must be contained in that edge as well.\end{cor}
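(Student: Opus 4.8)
The plan is to reduce to Corollary \ref{cor:edgeconv} and then exclude that the limiting segment can ``turn a corner'' at a vertex of $P$, using only the fact that a vertex of a projective polyhedron is an extreme point (equivalently, no two edges incident to a common vertex of $P$ are collinear through it). The delicate point will be precisely this last exclusion; everything else is bookkeeping with the stratification of $\partial P$.

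First I would fix notation. Let $e$ be the edge of $P$ whose relative interior contains $p=\lim v_n$, let $e_n$ be the edge of $P_n$ incident to $v_n$ corresponding to a fixed edge of $\Gamma$ through $v$, and let $e_\infty$ be an accumulation point of the sequence $e_n$. After applying a fixed isometry we may assume $P$ lies in the standard affine chart, so $e_\infty$ is a Euclidean segment; if it is a single point it must be $p$ and there is nothing to prove, so assume it is nondegenerate. Next I would observe that $e_\infty\subseteq\partial P$: a point of $e_\infty$ lying in the interior of $P$ would be a limit of boundary points of $P_n$ which, for $n$ large, lie in the interior of $P_n$, a contradiction. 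Since $\partial P$ is the disjoint union of its open faces, open edges and vertices, and $e_\infty$ meets no open face by Corollary \ref{cor:edgeconv}, it follows that $e_\infty$ is contained in the $1$-skeleton of $P$. Finally $p\in e_\infty$, because $v_n$ is an endpoint of $e_n$ and $v_n\to p$.

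Then I would prove $e_\infty\subseteq e$ by a maximality argument. Fix $x\in e_\infty$ and walk along the subsegment $[p,x]\subseteq e_\infty$ starting from $p$. Because $p$ lies in the relative interior of $e$ and distinct edges of $P$ meet only at vertices, a neighbourhood of $p$ in the $1$-skeleton of $P$ is just $e$, so an initial portion of $[p,x]$ lies in $e$. Let $y$ be the last point of $[p,x]$ for which $[p,y]\subseteq e$. If $y$ were in the relative interior of $e$, then since $[p,x]$ and $e$ are straight and agree on a subsegment of positive length they are collinear, and the walk would stay in $e$ slightly past $y$, contradicting maximality; hence $y$ is an endpoint of $e$, i.e.\ a vertex $u$ of $P$. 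Now if $y\neq x$, the portion of $[p,x]$ immediately beyond $u$ lies in the $1$-skeleton of $P$, on the line spanned by $e$ but outside the segment $e$; being a small connected arc having $u$ as a one-sided limit point, it lies on a single edge $e'$ of $P$ incident to $u$, necessarily collinear with $e$ and on the opposite side of $u$. Then $e\cup e'$ is a segment contained in $P$ having $u$ in its interior, contradicting that the vertex $u$ is an extreme point of $P$. Therefore $y=x$, so $x\in e$; as $x\in e_\infty$ was arbitrary, $e_\infty\subseteq e$.

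The main obstacle is this last step: ruling out that $e_\infty$ continues past an endpoint of $e$. This is exactly where the non-degeneracy of $P$ is used, and I expect it to be the only part requiring care, the rest being a direct application of Corollary \ref{cor:edgeconv} together with the boundary stratification of a projective polyhedron.
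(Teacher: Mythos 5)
Your proof is correct, and it follows the route the paper intends: the paper states this corollary without proof, as an immediate consequence of Corollary \ref{cor:edgeconv}, and your argument supplies exactly the missing details --- namely that the limit segment lies in the $1$-skeleton by Corollary \ref{cor:edgeconv} together with $e_\infty\subseteq\partial P$, and that it cannot continue past an endpoint of $e$ because a vertex of a convex projective polyhedron is an extreme point, so no two edges through it are collinear. The collinearity step is the only non-routine point and you handle it correctly.
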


 Suppose that $P_n$ satisfies all the hypotheses of Lemma \ref{lem:coll1}, that $P_n\ra P$ with $P$ a generalized hyperbolic polyhedron with $1$-skeleton $\Gamma'$ and additionally that all vertices of $P_n$ converge. Then the limit induces a simplicial map $\phi:\Gamma\ra\tilde{\Gamma'}$ where $\tilde{\Gamma'}$ is obtained from $\Gamma'$ by adding bivalent vertices to some of its edges. The map $\phi$ sends
 \begin{itemize}
  \item  each vertex of $\Gamma$ to its limit in $\tilde{\Gamma'}$;
  \item each edge of $\Gamma$ linearly to the convex hull of the image of its endpoints (the convex hull is contained in $\tilde{\Gamma'}$ by Corollary \ref{cor:edgeconv}).
 \end{itemize}

 Notice that $\tilde{\Gamma'}$ is not always $3$-connected, however it is still $2$-connected since it cannot be disconnected by removing a single vertex.

 \begin{figure}
  \centering
  \begin{minipage}{.45\textwidth}
   \centering
    \begin{tikzpicture}
 \draw[thick] (0,2)--(1,1)--(0,0);
 \draw[thick](0,1)--(4,1);
 \draw[thick] (4,2)--(3,1)--(4,0);
 \end{tikzpicture}
  \end{minipage}
$\longrightarrow$
  \begin{minipage}{.45\textwidth}
   \centering
    \begin{tikzpicture}
 \draw[thick] (0,2)--(1,1)--(0,0);
 \draw[thick](0,1)--(2,1);
 \draw[thick] (2,2)--(1,1)--(2,0);
 \end{tikzpicture}
  \end{minipage}
  \caption{An edge collapsing to a vertex.}\label{fig:edgecollapse}
 \end{figure}
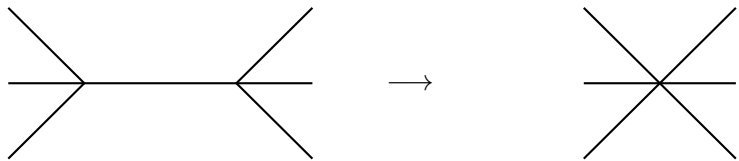

 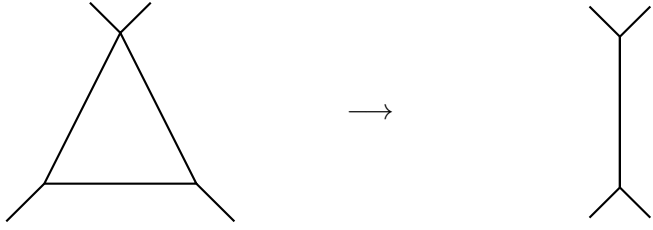
\begin{figure}
  \centering
  \begin{minipage}{.45\textwidth}
   \centering
 \begin{tikzpicture}
 \draw[thick] (2,2)--(3,0)--(1,0)--(2,2);
 \draw[thick] (1.6,2.4)--(2,2)--(2.4,2.4);
 \draw[thick] (1,0)--(0.5,-0.5);
 \draw[thick] (3,0)--(3.5,-.5);
 \end{tikzpicture}
  \end{minipage}
$\longrightarrow$
  \begin{minipage}{.45\textwidth}
   \centering
 \begin{tikzpicture}
 \draw[thick] (2,2)--(2,0)--(2,0)--(2,2);
 \draw[thick] (1.6,2.4)--(2,2)--(2.4,2.4);
 \draw[thick] (2,0)--(1.6,-0.4);
 \draw[thick] (2,0)--(2.4,-.4);
 \end{tikzpicture}
  \end{minipage}
  \caption{A face collapsing to an edge.}\label{fig:facecollapse}
 \end{figure}
 \begin{lem}\label{lem:coll2}
  The map $\phi:\Gamma\ra\tilde{\Gamma'}$ factors through a graph $\hat{\Gamma}$, isomorphic to $\tilde{\Gamma'}$ and obtained from $\Gamma$ via a finite sequence of the following moves:
   \begin{enumerate}[label=(\roman*)]
    \item an edge of $\Gamma$ collapses to a vertex (see Figure \ref{fig:edgecollapse});
    \item a face of $\Gamma$ collapses to an edge (see Figure \ref{fig:facecollapse}).
   \end{enumerate}
 \end{lem}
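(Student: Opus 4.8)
\emph{Overview and the local picture.}
The plan is to read the combinatorics of the degeneration $P_n\to P$ off the corollaries above, package it as a monotone surjection of spheres extending $\phi$, and then realise $\phi$ as a composition of the moves (i) and (ii) by induction on the number of edges of $\Gamma$. First I would record the local picture. Since all vertices of $P_n$ converge, each edge of $P_n$ converges to the segment joining the limits of its endpoints and, writing a face of $P_n$ as the convex hull of its vertices, each face converges too. The limiting planes of Lemma~\ref{lem:coll1} are supporting planes of $P$, so all these limits lie in $\partial P$; combining this with Corollary~\ref{cor:edgeconv} (a segment in $\partial P$ meeting the interior of a $2$-face would force edges of $P_n$ to accumulate there) gives: a face of $P_n$ converges to a face of $P$, to a segment contained in $\Gamma'$, or to a point of $\Gamma'$; an edge of $P_n$ converges to a segment contained in $\Gamma'$, hence to a (possibly subdivided) edge of $\tilde\Gamma'$ or collapses to a point; and a vertex of $P_n$ converges to a vertex of $\tilde\Gamma'$. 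This is precisely the content of the simplicial surjection $\phi\colon\Gamma\to\tilde\Gamma'$.

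\emph{Connectedness of fibres.}
Next I would establish the one global fact. Fix $x_0$ in the interior of $P$, hence of $P_n$ for large $n$; radial projection from $x_0$ is a homeomorphism $\rho_n\colon\partial P_n\to\partial P$. Composing with coherent combinatorial identifications $(S^2,\Gamma)\cong\partial P_n$ and $(S^2,\Gamma')\cong\partial P$ (e.g.\ barycentric on each face, so they agree on shared edges and depend continuously on vertex positions) yields homeomorphisms $\Phi_n\colon S^2\to S^2$ which, because every cell of $P_n$ converges, tend uniformly to a continuous surjection $\Phi\colon(S^2,\Gamma)\to(S^2,\tilde\Gamma')$ restricting to $\phi$ on $1$-skeleta. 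A uniform limit of homeomorphisms of $S^2$ has connected point-preimages: given two disjoint closed subsets of a fibre, separate them by a Jordan curve $\gamma$ with $\Phi(\gamma)$ avoiding the image point; for $n$ large the Jordan curves $\Phi_n(\gamma)$ stay close to $\Phi(\gamma)$, hence lie to one side of the image point, which contradicts $\Phi_n$ being a homeomorphism. Therefore $\phi^{-1}(\sigma)$ is connected for every cell $\sigma$ of $\tilde\Gamma'$; in particular any edge of $\Gamma$ whose endpoints have equal image is collapsed by $\phi$, and $\phi^{-1}(\bar v)\cap\partial f$ is a single arc for every face $f$ of $\Gamma$ and vertex $\bar v$ of $\tilde\Gamma'$.

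\emph{The induction.}
Now I would induct on the number of edges of $\Gamma$. If $\phi$ is injective on vertices, then by the local picture it collapses no edge and sends no face to a segment, so it is an isomorphism $\Gamma\cong\tilde\Gamma'$ and there is nothing to prove. Otherwise some fibre of $\phi$ contains an edge $e$; choosing an innermost such configuration, either $e$ lies on a triangular face $f$ with $\phi(f)$ a segment, and we apply move (ii) to $f$, or we apply move (i) to $e$. Either move produces a planar graph $\Gamma^{(1)}$ with strictly fewer edges through which $\phi$ factors as $\Gamma\to\Gamma^{(1)}\to\tilde\Gamma'$, the second map again satisfying the hypotheses of the lemma; the innermost choice prevents the creation of loops, and whenever a collapse would produce a doubled edge both of whose sides share an image (this is exactly a collapsed triangular face of $\tilde\Gamma'$, since $\tilde\Gamma'$ is simple) we resolve it first by move (ii). The process terminates, and the composite of all the moves exhibits $\hat\Gamma\cong\tilde\Gamma'$ as in the statement.

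\textbf{The main obstacle} is the connectedness-of-fibres step: a priori $\phi$ could glue together faraway parts of $\Gamma$ (wrapping $\Gamma$ around $\tilde\Gamma'$), in which case it would not factor through any sequence of local collapses at all. Ruling this out — equivalently, showing that the degeneration realises $\phi$ as (the restriction of) a monotone cellular surjection of spheres — is exactly where the convexity of the $P_n$ and the angle bounds, through Lemma~\ref{lem:coll1} and Corollary~\ref{cor:edgeconv}, are genuinely needed; a secondary nuisance is the bookkeeping in the induction, where one must keep the order of the collapses under control so that every intermediate graph is an honest ($2$-connected) planar graph.
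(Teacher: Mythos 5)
Your route to the key connectedness statement is genuinely different from the paper's: the paper cuts the $P_n$ along a plane through two vertices of a fibre and chases a chain of colliding vertices across the cut face, whereas you package the degeneration as a uniform limit $\Phi$ of sphere homeomorphisms $\Phi_n$ and invoke monotonicity of such limits. That part is sound (with the caveat that the Jordan curve must separate the two pieces of a \emph{separation} of the compact fibre, obtained e.g.\ from the boundary of a finite union of small round disks covering one piece), and it has a payoff you should make explicit: the same framework yields the ``filled cycle'' property --- if a cycle $\lambda\subseteq\Phi^{-1}(\bar v)$ bounds disks $D_1,D_2$, then $\Phi_n(\lambda)$ shrinks to $\bar v$, so for large $n$ one of the $\Phi_n(D_i)$ is trapped in a small ball around $\bar v$, whence $\Phi(D_i)=\{\bar v\}$ --- which your induction silently uses in the face-collapse branch and which the paper derives instead from the $2$-connectedness of $\tilde{\Gamma'}$.

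The genuine gap is the passage from connectedness of $\Phi^{-1}(\bar v)\subseteq S^2$ to connectedness of $\phi^{-1}(\bar v)\subseteq\Gamma$, and in particular your assertion that $\phi^{-1}(\bar v)\cap\partial f$ is a single arc. These are not equivalent once a face degenerates: if a quadrilateral face of $P_n$ with vertices converging in cyclic order to $a,\bar v,b,\bar v$ ``folded'' onto the segment $[a,b]$, then $\Phi^{-1}(\bar v)$ would remain connected (through the interior of that face), while $\phi^{-1}(\bar v)$ would contain two non-adjacent vertices of $\Gamma$ and could be disconnected as a subgraph, breaking the factorization into moves (i) and (ii). Ruling out such folds is not a formal consequence of monotonicity, nor even of the convexity of the single face --- the Euclidean quadrilaterals with vertices $(0,0),(1,\epsilon),(2,0),(1,-\epsilon)$ fold convexly onto a segment as $\epsilon\to0$. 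It requires the three-dimensional input that no two face planes of $P_n$ merge in the limit (Lemma \ref{lem:coll1}), applied to the faces of $P_n$ adjacent to the degenerating one: all their limit planes contain the line through $a$ and $b$ and support $P$ along it, and comparing the normal cones of the edges $[a,\bar v]$ and $[\bar v,b]$ forces two distinct face planes of $P_n$ to converge to the same plane, a contradiction. This per-face control is exactly the opening step of the paper's proof, and it must be supplied before your induction can run.
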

\begin{proof}

Let $\Lambda_v\subseteq \Gamma$ be the subset $\Lambda_v:=\phi^{-1}(v)$ for any $v$ vertex of $\tilde{\Gamma'}$.

The map $\phi$ satisfies the following properties:

\begin{itemize}
 \item $\phi$ is a surjective simplicial map between $2$-connected planar graphs;
 \item for every $v$ vertex of $\tilde{\Gamma'}$, the set $\Lambda_v$ is connected;
 \item if a cycle $\lambda$ is contained in $\Lambda_v$, then all vertices in one of the two components of the sphere bounded by $\lambda$ must be in $\Lambda_v$.
\end{itemize}

The fact that $\phi$ is surjective, simplicial and between $2$-connected planar graphs is obvious; let us prove that $\Lambda_v$ is connected. 

First notice that if $w_1,w_2\in\Lambda_v$ lie on the same face of $\Gamma$, they are in the same connected component of $\Lambda_v$ (even if they are not vertices of $\Gamma$). This is because the face must correspond to a convex face of $P_n$, and if two different vertices of a sequence of convex polygons coincide in the limit, then by convexity one of the two paths connecting them (on either side of the boundary of the polygon) must coincide in the limit as well. 

Take now generic $w_1,w_2\in\Lambda_v$: they determine points $w_1^n,w_2^n\in P_n$. Pick any converging sequence of planes $\Pi_n$ containing $w^n_1,w^n_2$ and cut $P_n$ along $\Pi_n$; for $n$ big enough the planes can be chosen so that the $1$-skeleta of the resulting polyhedra do not change with $n$ (since they depend on the edges of $P_n$ that are intersected by $\Pi_n$). This gives a sequence of polyhedra $R_n$ with $w_1^n$ and $w_2^n$ collapsing to the same point as $n\ra\infty$; moreover now $w^n_1$ and $w^n_2$ share a face $F_n\subseteq \Pi_n$. Then there is a sequence $z^n_1,\dots,z^n_k$ of vertices of $R_n$ that all converge to $v$; each $z^n_i$ shares an edge with $z^n_{i-1}$ and $z^n_{i+1}$. The points $z^n_1,\dots,z^n_k$ can also be naturally viewed as points in the $1$-skeleton of $P_n$: notice that $z_i$ shares a face with $z_{i+1}$ in $\Gamma$, therefore they are all in the same connected component of $\Lambda_v$. Similarly $w_1$ lies on the same face of $z_1$ and $w_2$ lies on the same face of $z_k$, and we obtain that $w_1$ and $w_2$ are in the same connected component of $\Lambda_v$.

Finally, let $\lambda$ be a cycle contained in $\Lambda_v$, and $K_1,K_2$ the two connected components of $S^2$ bounded by $\lambda$. If $v_1\in \Gamma\cap K_1$ and $v_2\in\Gamma\cap K_2$ and neither of them was in $\Lambda_v$, then $\tilde{\Gamma'}$ could be disconnected by removing $v$. To see this, notice that $\phi(v_1)\neq\phi(v_2)$ and take any path $\gamma$ connecting $\phi(v_1)$ to $\phi(v_2)$. We wish to prove that $\gamma$ must contain $v$, and to do so we show that there is a path from $v_1$ to $v_2$ contained in $\phi^{-1}(\gamma)$ (such a path has to cross $\lambda\subseteq\Lambda_v$, hence $v\in\gamma$). Take the edge $e$ of $\gamma$ containing $\phi(v_1)$, and lift it to any edge $\tilde{e}$ of $\Gamma$. If $v_1\notin \tilde{e}$, it can connected to one of its endpoint via a path contained in $\Lambda_{\phi(v_1)}$ since this is connected. We can keep lifting the path $\gamma$ to a path contained in $\phi^{-1}(\gamma)$, one edge at a time, gluing endpoints with a path contained in $\phi^{-1}(\gamma)$ when necessary.

To see this, notice that $\phi(v_1)\neq\phi(v_2)$ and any path $\gamma$ connecting $\phi(v_1)$ to $\phi(v_2)$ must pass through $v$, since it would lift to a path contained in $\phi^{-1}(\gamma)$ connecting $v_1$ to $v_2$ which has to pass through $\lambda$. This contradicts the fact that $\tilde{\Gamma'}$ is $2$-connected.

We now prove that any map $\phi:\Gamma_1\ra\Gamma_2$ satisfying the previous $3$ properties must factor through a graph $\tilde{\Gamma}$ obtained via edge or face collapses as in the thesis of the Lemma.
 Let $n$ be the number of vertices of $\Gamma_1$ and $m$ the number of vertices of $\Gamma_2$, and proceed by induction on $n-m$.
 
 If $n-m=0$ then $\Gamma_1$ is isomorphic to $\Gamma_2$ and there is nothing to prove.
 
 For the inductive step, we have $n>m$ which implies there is a $w$ such that $\Lambda_w$ contains more than one vertex; we have by hypothesis that $\Lambda_w$ is a connected subgraph of $\Gamma_1$. 
 
 If $\Lambda_w$ has a leaf, then $\phi$ factors through the graph obtained from $\Gamma_1$ by contracting the two vertices of this leaf. If $\Lambda_w$ contains a cycle $\lambda$, then all vertices in one of the two components of the plane bounded by $\lambda$ must be in $\Lambda_w$. In particular $\Lambda_w$ contains a cycle that bounds a face, and $\phi$ factors through the graph obtained from collapsing this face to an edge.
 
 In either case, the new graph has fewer vertices than $\Gamma_1$ and the induced $\phi$ has the same $3$ properties. We can then conclude by induction.
\end{proof}

\subsection{The rectification of a polyhedron}\label{sec:rect}

\begin{dfn}\label{dfn:rect}
We say that a projective polyhedron $\overline{\Gamma}$ is a \emph{rectification} of a $3$-connected planar graph $\Gamma$ if the $1$-skeleton of $\overline{\Gamma}$ is equal to $\Gamma$ and all the edges of $\overline{\Gamma}$ are tangent to $\partial\h$.
\end{dfn}

\begin{oss}
 Notice that, by definition, $\overline{\Gamma}$ is not a generalized hyperbolic polyhedron, since none of its edges intersect $\h$. However as we will see it is still possible to give a definition of the volume of $\overline{\Gamma}$ as for any proper polyhedron.
\end{oss}

\begin{oss}
 A rectification of $\Gamma$ gives a circle packing in $\partial\h=S^2$ with tangency graph $\Gamma^*$. To see this, take the circles arising from the intersection of $\overline{\Gamma}$ with $\partial \h$; it is immediate to see that they are a circle packing and that their tangency graph is $\Gamma^*$, since each circle corresponds to a face of $\overline{\Gamma}$ and two circles are tangent if and only if their faces share an edge. 
 
 From this we could quickly prove the existence and uniqueness of the rectification by invoking the Koebe-Thurston Theorem \cite[Corollary 13.6.2]{thurston} about the existence and uniqueness of circle packings; however Thurston's proof of this theorem requires implicitly the existence and uniqueness of the rectification. Therefore, we are going to prove this separately in Proposition \ref{prop:exuniq}, with essentially the same proof given in \cite{thurston}.
\end{oss}

\begin{oss}\label{rmk:tang}
If two planes of $\rp$ intersect in a line tangent to $\partial\h$, then their hyperbolic dihedral angle is $0$; furthermore two distinct planes which intersect in $\overline{\h}$ have dihedral angle $0$ if and only if they intersect in a line tangent to $\partial \h$. Therefore, a projective polyhedron is a rectification of $\Gamma$ if and only if its $1$-skeleton is $\Gamma$, all its edges intersect $\overline{\h}$ and all its dihedral angles are $0$.
 
\end{oss}

The polyhedron $\overline{\Gamma}$ is not a generalized hyperbolic polyhedron, since its edges do not intersect $\h$. However its truncation can be defined in the same way as before, and it can be described very explicitly. Consider two vertices $v,v'$ of $\overline{\Gamma}$ connected by an edge $\overline{vv'}$ tangent to $\partial\h$. The planes $\Pi_{v}$ and $\Pi_{v'}$ intersect at the point of tangency, and the truncation of $\overline{\Gamma}$ is going to have an ideal, $4$-valent vertex with only right angles at that point. This can be repeated for every edge of $\overline{\Gamma}$ to see that its truncation has only right angles, and only ideal $4$-valent vertices. Some of its faces come from faces of $\Gamma$, while the others from its vertices. Notice that because of this, even though $\overline{\Gamma}$ is not a generalized hyperbolic polyhedron, its truncation is an ideal finite volume right-angled hyperbolic polyhedron and therefore $\overline{\Gamma}$ has a well defined hyperbolic volume.

This explicit description of the truncation of $\overline{\Gamma}$ quickly leads to the existence and uniqueness of the rectification.

\begin{prop}\label{prop:exuniq}
 For any $3$-connected planar graph $\Gamma$, the rectification $\overline{\Gamma}$ exists and is unique up to isometry.
\end{prop}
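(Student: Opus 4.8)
The plan is to reduce the statement to the existence and uniqueness of an \emph{ideal right-angled} hyperbolic polyhedron with a prescribed $1$-skeleton, and then to invoke (or, if a self-contained argument is wanted, reprove) Andreev's theorem in the version allowing ideal vertices — equivalently, the Koebe--Andreev--Thurston circle-packing theorem, matching the circle-packing picture of the preceding remark.

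\emph{Reduction.} Suppose first that $\overline{\Gamma}$ is a rectification. As noted in the remark before the statement, each edge $\overline{vv'}$ is tangent to $\partial\h$, so $\Pi_v\cap\Pi_{v'}$ is exactly the point of tangency and the part of $\overline{vv'}$ surviving in the truncation is a single point; hence the truncation $Q$ of $\overline{\Gamma}$ is a finite-volume ideal right-angled hyperbolic polyhedron whose $1$-skeleton is the medial graph $M(\Gamma)$ of $\Gamma$ (one $4$-valent ideal vertex per edge of $\Gamma$, with faces coming from the faces and from the vertices of $\Gamma$; here Remark \ref{rem:unique} is used to make sense of the faces and vertices of $\Gamma$). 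Conversely, given \emph{any} ideal right-angled polyhedron $Q$ with $1$-skeleton $M(\Gamma)$, glue back to each face of $Q$ dual to a vertex $v$ the convex hull of that face with the point $v$ polar to it — exactly as in the last paragraph of the proof of Corollary \ref{cor:locbad}; by Remark \ref{rmk:tang} the resulting projective polyhedron has $1$-skeleton $\Gamma$, has all edges tangent to $\partial\h$, and is therefore a rectification in the sense of Definition \ref{dfn:rect}. Thus Proposition \ref{prop:exuniq} is equivalent to the assertion that there is a unique, up to isometry, ideal right-angled hyperbolic polyhedron with $1$-skeleton $M(\Gamma)$.

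\emph{Existence and uniqueness of $Q$.} One checks that $M(\Gamma)$ satisfies the combinatorial hypotheses for an ideal right-angled realization: $M(\Gamma)$ is $3$-connected and $4$-valent, and $4$-valency is precisely the local condition forced at an ideal vertex with right angles (Lemma \ref{lem:angid}); moreover $M(\Gamma)^{*}$ is the vertex--face incidence graph of $\Gamma$, which is bipartite, so it has no $3$-circuit, and it has no prismatic $4$-circuit because two distinct faces of a $3$-connected planar graph meet along at most one edge (if they shared two non-adjacent vertices one would get a $2$-separation of $\Gamma$). Andreev's theorem then yields $Q$ and its uniqueness; alternatively, uniqueness follows from Mostow--Prasad rigidity applied to the finite-volume reflection orbifold generated by the faces of $Q$, whose Coxeter presentation depends only on the combinatorics of $M(\Gamma)$. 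A variant of the existence argument, more in the spirit of the rest of the paper, is to take the limit as $t\to0$ of the hyperideal polyhedra $P_t$ with $1$-skeleton $\Gamma$ and all dihedral angles equal to $t$, which exist and are unique for small $t$ by Theorem \ref{teo:bonbao} (the two families of inequalities hold strictly once $t$ is small, since their left-hand sides tend to $0$ while, by $3$-connectivity, the relevant curves cross at least three edges), and to show that their truncations converge to an ideal right-angled polyhedron with $1$-skeleton $M(\Gamma)$.

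\emph{Main obstacle.} The reduction above and the combinatorial verifications are routine, and if one simply cites Andreev/Thurston there is nothing further to do. The real difficulty is entirely inside the existence statement for $Q$: a self-contained proof must reproduce the delicate part of Thurston's argument, namely the \emph{properness} (non-degeneration) of the natural map from the space of candidate polyhedra to the space of admissible data — equivalently, in the limiting approach, the fact that the $P_t$ do not collapse as $t\to0$, so that Lemma \ref{lem:coll1}-type phenomena are excluded even though the dihedral angles are not bounded away from $0$. It is exactly here that $3$-connectivity of $\Gamma$ (equivalently of $\Gamma^{*}$) is indispensable; once properness is in hand, local rigidity of the combinatorial type turns the map into a covering of a connected base, and a degree count finishes the proof.
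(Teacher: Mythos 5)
Your reduction is exactly the paper's: pass to the medial graph $M(\Gamma)$ (one $4$-valent vertex per edge of $\Gamma$), realize it as an ideal right-angled polyhedron $Q$, and recover $\overline{\Gamma}$ by keeping the faces of $Q$ corresponding to the faces of $\Gamma$. The only divergence is the black box used for the existence and uniqueness of $Q$: the paper cites Theorem \ref{teo:bonbao} (Bao--Bonahon), which it has already stated and which handles ideal vertices through the equality case of its first condition, precisely because the remark preceding the proposition flags the Koebe--Thurston circle-packing theorem as circular for this purpose; your primary route goes through Andreev/Koebe--Thurston, so you should either switch the citation to Theorem \ref{teo:bonbao} or make sure the version of Andreev you invoke is proved independently of circle packings. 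On the other hand, your combinatorial verification (bipartiteness of $M(\Gamma)^{*}$ rules out curves crossing three edges, and $3$-connectivity of $\Gamma$ rules out prismatic $4$-circuits) is exactly what is needed to apply Theorem \ref{teo:bonbao} as well, and the paper omits this check, so that part of your write-up is a genuine improvement. The closing paragraph about properness is relevant only if one insists on a self-contained proof of the underlying existence theorem, which neither you nor the paper actually carries out.
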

\begin{proof}
 Consider the planar graph obtained by taking a vertex for each edge of $\Gamma$, where two vertices are joined by an edge if and only if the corresponding edges share an angle (i.e. they share a vertex and they lie on the same face) in $\Gamma$. This graph is $4-$valent, and by Theorem \ref{teo:bonbao} it is the $1$-skeleton of a unique (up to isometry) right-angled ideal polyhedron $P$; this is going to be the truncation of $\overline{\Gamma}$. Some faces of $P$ correspond to vertices of $\Gamma$, while the others correspond to its faces. Then, the faces of $P$ corresponding to faces of $\Gamma$ bound the rectification $\overline{\Gamma}$ of $\Gamma$, and we can reconstruct uniquely $\overline{\Gamma}$ from $\Gamma$. 
\end{proof}
 
 \begin{oss}\label{oss:dual}
 Notice that from the polyhedron $P$ constructed in the proof of Proposition \ref{prop:exuniq} it is immediate to see that the rectification of $\Gamma$ and of its dual $\Gamma^*$ have the same truncation (and in particular the same volume): if we took the faces of $P$ corresponding to vertices of $\Gamma$, these would bound the rectification of $\Gamma^*$.
 \end{oss}
 
 \begin{ex}
  The truncation of the rectification of the tetrahedral graph (Figure \ref{fig:retttet}) is the right-angled hyperbolic ideal octahedron, whose volume is $v_8\cong 3.66$. As we noted in the introduction, it is proven in \cite[Theorem 4.2]{ushi} that indeed the supremum of volumes of proper hyperbolic tetrahedra is equal to $v_8$.
 \end{ex}

 \begin{figure}
  \centering
  \begin{minipage}{.4\textwidth}
   \centering
  \begin{tikzpicture}[rotate=20]
\centering
\draw (0,0) circle[radius=2cm];
\draw[thick] (2.2,2.2)--(1.84,-1.84)--(-2.2,-2.2)--(-1.84,1.84)--(2.2,2.2);
\draw[thick] (2.2,2.2)--(-2.2,-2.2);
\draw[thick,dashed](1.84,-1.84)--(-1.84,1.84);

\end{tikzpicture}
  \end{minipage}
  \hspace{1cm}
\begin{minipage}{.4\textwidth}
 \centering
   \begin{tikzpicture}[rotate=20]
\centering
\fill[color=gray](1.98,-0.15)--(-0.15,1.98)--(-0.1,-0.1)--(1.98,-0.15);
\fill[color=gray](-1.98,0.15)--(0.15,-1.98)--(-0.1,-0.1)--(-1.98,0.15);
\draw (0,0) circle[radius=2cm];
\draw[thick] (2.2,2.2)--(1.84,-1.84)--(-2.2,-2.2)--(-1.84,1.84)--(2.2,2.2);
\draw[thick] (2.2,2.2)--(-2.2,-2.2);
\draw[thick,dashed](1.84,-1.84)--(-1.84,1.84);
\draw[thin](1.98,-0.15)--(-0.15,1.98)--(-0.1,-0.1)--(1.98,-0.15);
\draw[thin](-1.98,0.15)--(0.15,-1.98)--(-0.1,-0.1)--(-1.98,0.15);
\draw[thin](-1.98,0.15)--(-0.15,1.98);
\draw[thin](1.98,-0.15)--(0.15,-1.98);
\draw[thin,dashed] (1.98,-0.15)--(0,0)--(0.15,-1.98);
\draw[thin,dashed] (-1.98,0.15)--(0,0)--(-0.15,1.98);
\end{tikzpicture}
\end{minipage}
\caption{The rectification of a tetrahedron (left) and its truncation (right), the ideal right-angled octahedron. The gray faces arise from the truncation of the top and bottom vertices.}\label{fig:retttet}
 \end{figure}
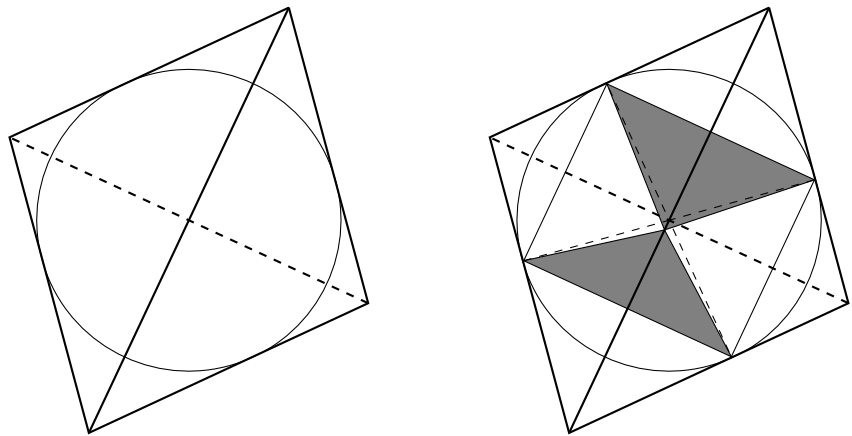
 
 \begin{ex}
  The truncation of the rectification of the $n$-gonal pyramid is the $n$-gonal antiprism. Its volume is given by the formula \cite{thurston}:
  
  \begin{displaymath}
   2n\left(\Lambda\left(\frac{\pi}{4}+\frac{\pi}{2n}\right)+\Lambda\left(\frac{\pi}{4}-\frac{\pi}{2n}\right)\right).
  \end{displaymath}

  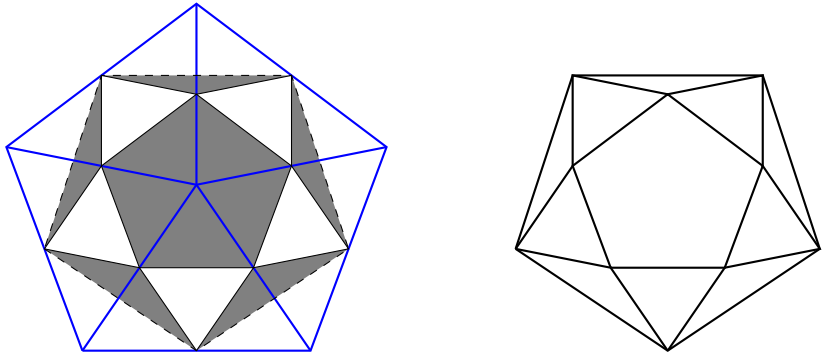
\begin{figure}
   \centering
      \begin{minipage}{.48\textwidth}
    \centering
 \begin{tikzpicture}
 
 \fill[color=gray] (0.75,1.1)--(2.25,1.1)--(2.75,2.45)--(1.5,3.4)--(0.25,2.45)--(0.75,1.1);
 \fill[color=gray] (1.5,0)--(2.25,1.1)--(3.5,1.35);
 \fill[color=gray] (3.5,1.35)--(2.75,2.45)--(2.75,3.65);
 \fill[color=gray] (2.75,3.65)--(1.5,3.4)--(0.25,3.65);
 \fill[color=gray] (0.25,3.65)--(0.25,2.45)--(-0.5,1.35);
 \fill[color=gray] (1.5,0)--(0.75,1.1)--(-0.5,1.35);
 \draw[thick,blue] (0,0)--(3,0)--(4,2.7)--(1.5,4.6)--(-1,2.7)--(0,0);
 \draw[thick,blue] (1.5,2.2)--(1.5,4.6);
 \draw[thick,blue] (1.5,2.2)--(0,0);
 \draw[thick,blue] (1.5,2.2)--(3,0);
 \draw[thick,blue] (1.5,2.2)--(4,2.7);
 \draw[thick,blue] (1.5,2.2)--(-1,2.7);
 \draw[thin] (1.5,0)--(0.75,1.1)--(-0.5,1.35);
 \draw[thin, dashed] (-0.5,1.35)--(1.5,0)--(3.5,1.35)--(2.75,3.65)--(0.25,3.65)--(-0.5,1.35);
 \draw[thin] (0.75,1.1)--(2.25,1.1)--(2.75,2.45)--(1.5,3.4)--(0.25,2.45)--(0.75,1.1);
 \draw[thin] (1.5,0)--(2.25,1.1)--(3.5,1.35)--(2.75,2.45)--(2.75,3.65)--(1.5,3.4)--(0.25,3.65)--(0.25,2.45)--(-0.5,1.35);
  \end{tikzpicture}
   \end{minipage}
   \begin{minipage}{.48\textwidth}
    \centering
     \begin{tikzpicture}
  \draw[thick,white] (0,0)--(3,0)--(4,2.7)--(1.5,4.6)--(-1,2.7)--(0,0);
 \draw[thick,white] (1.5,2.2)--(1.5,4.6);
 \draw[thick,white] (1.5,2.2)--(0,0);
 \draw[thick,white] (1.5,2.2)--(3,0);
 \draw[thick,white] (1.5,2.2)--(4,2.7);
 \draw[thick,white] (1.5,2.2)--(-1,2.7);
 \draw[thick] (1.5,0)--(0.75,1.1)--(-0.5,1.35);
 \draw[thick] (-0.5,1.35)--(1.5,0)--(3.5,1.35)--(2.75,3.65)--(0.25,3.65)--(-0.5,1.35);
 \draw[thick] (0.75,1.1)--(2.25,1.1)--(2.75,2.45)--(1.5,3.4)--(0.25,2.45)--(0.75,1.1);
 \draw[thick] (1.5,0)--(2.25,1.1)--(3.5,1.35)--(2.75,2.45)--(2.75,3.65)--(1.5,3.4)--(0.25,3.65)--(0.25,2.45)--(-0.5,1.35);
  \end{tikzpicture}
   \end{minipage}
   \caption{The pentagonal pyramid (left, thick blue) with the truncation faces (in gray); on the right, its truncation, the pentagonal antiprism.}
  \end{figure}

 \end{ex}

 \begin{oss}
  As we noted the truncation of $\overline{\Gamma}$ is an ideal right-angled hyperbolic polyhedron; its volume can be calculated with a computer (see \cite{vesego} for a table with many computed volumes).
 \end{oss}

 To finish we prove that even though the rectification of $\Gamma$ is not a generalized hyperbolic polyhedron, nevertheless it is a limit of hyperideal (hence, proper) hyperbolic polyhedra.
 
 \begin{lem}\label{lem:convrett}
  There exists a continuous family of proper polyhedra $P_\epsilon$ with $1$-skeleton $\Gamma$ that converges to (a polyhedron in the isometry class of) $\overline{\Gamma}$ as $\epsilon\ra0$.
 \end{lem}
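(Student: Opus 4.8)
The plan is to obtain $\overline{\Gamma}$ as the $\epsilon\ra0$ limit of the hyperideal polyhedra $P_\epsilon$ with $1$-skeleton $\Gamma$ whose dihedral angles are all equal to $\epsilon$. First I would check these exist: $\Gamma$ is $3$-connected, hence $3$-edge-connected, so every closed curve meeting $\Gamma$ transversely crosses at least three of its edges; thus for a closed curve through $h\geq 3$ edges one has $h\epsilon<(h-2)\pi$ as soon as $\epsilon<\pi/3$, and the same bound makes the arc inequalities of Theorem~\ref{teo:bonbao} strict (an arc crossing a single edge imposes no constraint, since the two faces adjacent to an edge share a vertex). Hence for $\epsilon\in(0,\pi/3)$ Theorem~\ref{teo:bonbao} gives a unique hyperideal --- so proper --- polyhedron $P_\epsilon$ with $1$-skeleton $\Gamma$ and all dihedral angles $\epsilon$; by Lemma~\ref{lem:angid} every vertex of $P_\epsilon$ is hyperideal (a degree-$k$ vertex has angle sum $k\epsilon<(k-2)\pi$), so $P_\epsilon$ has no ideal vertices and Corollary~\ref{cor:loccoord} shows that $\epsilon\mapsto P_\epsilon$ varies smoothly, giving a continuous family on $(0,\pi/3)$.

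Next I would take the limit $\epsilon\ra0$. Normalizing by isometries and using compactness of $\left((\rp)^*\right)^F$, extract a subsequence $\epsilon_n\ra0$ with $P_{\epsilon_n}\ra P_0\in\left((\rp)^*\right)^F$. Every edge of $P_{\epsilon_n}$ meets $\h$, so every edge of $P_0$ meets $\overline{\h}$; and the dihedral angle of $P_0$ along any edge that survives in the limit equals $\lim_n\epsilon_n=0$. By Remark~\ref{rmk:tang} together with Proposition~\ref{prop:exuniq} it is therefore enough to show that $P_0$ is a \emph{non-degenerate} projective polyhedron with $1$-skeleton $\Gamma$: it is then a rectification of $\Gamma$, hence isometric to $\overline{\Gamma}$ by uniqueness; since the limit does not depend on the subsequence, the whole family $P_\epsilon$ converges to $\overline{\Gamma}$, which with the continuity above is the statement of the lemma.

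The non-degeneracy is the heart of the proof. Here I would pass to the truncation $T_n$ of $P_{\epsilon_n}$: since all vertices of $P_{\epsilon_n}$ are hyperideal, $T_n$ is a \emph{compact} hyperbolic polyhedron of one fixed combinatorial type (the polyhedron $\Gamma$ with every vertex truncated), with dihedral angle $\epsilon_n$ along each edge inherited from an edge of $\Gamma$ and $\tfrac{\pi}{2}$ along each edge of a truncation face. Along the $\tfrac{\pi}{2}$-edges the angles are bounded away from $0$ and $\pi$, so Lemma~\ref{lem:coll1} forces the corresponding faces to stay pairwise distinct and no truncation face to collapse; the only degeneration left available is the collapse of edges inherited from $\Gamma$, i.e.\ the edge-collapse move of Lemma~\ref{lem:coll2}. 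One then checks that, since the polar planes $\Pi_v$ and $\Pi_{v'}$ of the endpoints of an edge $e$ become tangent exactly when the dihedral angle along $e$ tends to $0$, \emph{every} such edge does collapse, and that a limit $T_0$ of the $T_n$ is a finite-volume polyhedron whose vertices are precisely the $4$-valent ideal vertices produced by these collapses and all of whose dihedral angles equal $\tfrac{\pi}{2}$. Its $1$-skeleton is then the $4$-valent graph appearing in the proof of Proposition~\ref{prop:exuniq}, so by the uniqueness part of Theorem~\ref{teo:bonbao} $T_0$ is the truncation of $\overline{\Gamma}$; undoing the truncation (a truncation determines its polyhedron) gives $P_0=\overline{\Gamma}$, and in particular $P_0$ is non-degenerate with $1$-skeleton $\Gamma$, as needed.

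I expect the delicate point to be exactly this last step: excluding that two faces of $T_n$ adjacent along an edge with vanishing dihedral angle become coplanar in the limit, or that $T_n$ collapses to lower dimension or escapes the affine chart --- this is where the convergence lemmas of Section~\ref{sec:background} must be combined with the rigidity of ideal right-angled polyhedra. An alternative would be to construct $P_\epsilon$ directly as a deformation of the truncation of $\overline{\Gamma}$ obtained by opening each $4$-valent ideal vertex into a short edge, but this meets the same rigidity/degeneration analysis at an ideal polyhedron.
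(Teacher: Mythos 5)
Your construction of $P_\epsilon$ --- the Bao--Bonahon polyhedron with all dihedral angles equal to $\epsilon$ --- is exactly the paper's starting point, and you correctly isolate the crux: choosing representatives of the isometry classes so that the limit is a non-degenerate projective polyhedron with $1$-skeleton $\Gamma$. But your treatment of that crux has a genuine gap. You write ``normalizing by isometries'' without saying how; this is not a formality, since for a bad choice of representatives the sequence converges (compactness of $\left((\rp)^*\right)^F$ gives a limit either way) to a point, to a subset of a plane, or to a set not contained in any affine chart. The paper's solution is Lemma \ref{lem:conv2}: fix a hyperideal vertex $v$, note that the truncation polygon $A_\epsilon=P_\epsilon\cap\Pi_{v_\epsilon}$ has angles equal to the dihedral angles $\epsilon$ of the edges through $v$, so its angle sum tends to $0<(k-2)\pi$, and choose representatives so that $A_\epsilon$ converges to a polygon of positive area; the limit then contains the cone from $v$ over that polygon and is therefore non-degenerate. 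Some quantitative input of this kind is unavoidable and is absent from your argument.

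Second, the tools you invoke for the degeneration analysis do not apply here. Lemmas \ref{lem:coll1} and \ref{lem:coll2} take as a hypothesis that the sequence already converges to a projective (respectively generalized hyperbolic) polyhedron --- i.e.\ they presuppose the non-degeneracy you are trying to establish --- and Lemma \ref{lem:coll1} moreover requires all dihedral angles bounded away from $0$, which fails both for $P_{\epsilon_n}$ and for the edges of the truncation $T_n$ inherited from $\Gamma$; restricting attention to the $\pi/2$-edges does not rescue this, because the proof of Lemma \ref{lem:coll1} propagates along chains of adjacent faces, which necessarily cross the $\epsilon_n$-edges. Once non-degeneracy is in hand, the paper also avoids your rigidity step entirely: since the limit $P^*$ is non-degenerate, two adjacent face planes cannot coincide in the limit, so each pair of limiting planes is distinct with dihedral angle $0$ and hence meets in a line tangent to $\partial\h$ (Remark \ref{rmk:tang}); and a collapsing edge would force a vertex of $P^*$ to lie on $\partial\h$ (it is simultaneously a limit of hyperideal points and of points of $\overline{\h}$), which is incompatible with every edge of $P^*$ being tangent to $\partial\h$. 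Thus $P^*$ is the rectification of $\Gamma$ directly, with no appeal to the uniqueness of the right-angled ideal polyhedron. I would rework your proof around Lemma \ref{lem:conv2} and this direct argument rather than around Lemmas \ref{lem:coll1} and \ref{lem:coll2}.
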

 \begin{proof}
  Take $P_\epsilon$ to be in the unique isometry class of polyhedra with $1$-skeleton $\Gamma$ and all angles equal to $\epsilon$; this can be done for any $\epsilon<\frac{\pi}{k}$ (where $k$ is the maximum valence among vertices of $\Gamma$) by Theorem \ref{teo:bonbao}.
  
  Since $P_\epsilon$ is defined only up to isometry, we need to fix an isometry class to establish the convergence. To do so we need a preliminary lemma of planar hyperbolic geometry.

 \begin{lem}\label{lem:conv2}
  If $\theta^n_1,\dots,\theta^n_k$ are angles of a hyperbolic $k$-gon $A_n$, for $n\in\mathbb{N}$, and as $n\ra\infty$ we have $\theta_i^n\ra \theta_i$ with $\sum_{i}\theta_i<(k-2)\pi$ then up to a suitable choice of isometry class of each $A_n$, every accumulation point of $A_n$ (with the convergence described in Section ...) is a hyperbolic polygon (possibly with ideal vertices); furthermore there is always at least one accumulation point.
 \end{lem}
 \begin{proof}
  Suppose first that $A_n$ is a triangle and $v_1^n,v_2^n,v_3^n$ are its vertices. Up to isometry we can suppose that for all $n$ the vertices $v_1^n$ and $v_2^n$ lie on the line $(-1,1)\times\{0\}\subseteq \mathbb{H}^2\subseteq\R^2$ and $v_3^n$ lies on the line $\{0\}\times\R$. Then as $n\ra \infty$, up to a subsequence if necessary, $v_i^n\ra v_i\in \overline{\mathbb{H}^2}$. 
  
  Pass to a subsequence such that $v_1^n,v_2^n,v_3^n$ converge to $v_1,v_2,v_3$.
  If the $v_i$'s are distinct, then $A_n$ converges to the triangle with vertices $v_1,v_2,v_3$. If $v_i=v_j$ we assume $i=1,j=2$ and $v_3\neq v_1$ (all other cases are identical). However this would mean that the internal angle of $A_n$ at $v_2^n$ must have the same limit as the external angle of $A_n$ at $v_1^n$. This implies that the limit of the sum of angles of $A_n$ is $\pi$, contradicting the hypothesis on the angles.
  
  Suppose now that $A_n$ is an $n$-gon, and triangulate it (with the same combinatorial triangulation for all $n$). One of the triangles must satisfy the hypotheses of the lemma, and thus we can choose an isometry class for each $n$ so that in any subsequence, this triangle converges to a non degenerate triangle; of course then if $A_{n_k}$ converges to some $A$, then $A$ must be non-degenerate since it contains a non-degenerate triangle.
 \end{proof}

 Now pick a vertex $v$ of $\Gamma$; up to isometry we can make it so that the corresponding vertex $v_\epsilon\in P_\epsilon$ is independent of $\epsilon$. Then any subsequence of $A_\epsilon:= P_\epsilon\cap \Pi_{v_\epsilon}$ satisfies the hypotheses of Lemma \ref{lem:conv2}, and we can choose a representative of $P_\epsilon$ so that (in this subsequence) $A_\epsilon$ converges to some $A$ of positive area.
 
 Each plane containing a face of $P_\epsilon$ is naturally an element of $(\rp)^*$, hence up to taking a subsequence each face of $P_\epsilon$ converges to a subset of a plane: the set of all limit planes will delimit a convex set $P^*$. To show that $P^*$ is a polyhedron, we need to prove that it is non-degenerate: this is because it must contain the cone from $v$ to $A$.
 
As we noted previously every vertex of $P^*$ must be a limit of vertices of $P_\epsilon$; likewise every edge of $P^*$ is a limit of edges of $P_\epsilon$. However a priori many vertices of $P_\epsilon$ could converge to the same vertex, and likewise with the edges. Furthermore, the angles of $P^*$ could be different than the limit of the angles of $P_\epsilon$. We now show however that in this specific case none of this happens.

Notice that each edge $e_\epsilon$ of $P_\epsilon$ has its dihedral angle converging to $0$, hence the faces $\Pi^1_\epsilon$ and $\Pi^2_\epsilon$ determining $e$ must either coincide in the limit (which would contradict the fact that $P^*$ is non-degenerate) or must converge to two faces $\Pi_1$ and $\Pi_2$ intersecting in an edge tangent to $\partial \h$. Therefore, every edge of $P^*$ is tangent to $\partial\h$ and $P^*$ is the rectification of its $1$-skeleton. If there are no two vertices of $P_\epsilon$ converging to the same vertex of $P^*$ then of course the $1$-skeleton of $P^*$ is $\Gamma$ and we have concluded. Suppose then that many vertices of $P_\epsilon$ converge to a vertex $w$ of $P^*$: since each vertex of $P_\epsilon$ is outside $\h$ and each edge intersects $\h$, $w$ must lie on the sphere at infinity. Then $P^*$ is a non-degenerate projective polyhedron such that:
\begin{itemize}
 \item each of its edges is tangent to $\partial\h$;
 \item some of its vertices are in $\partial\h$.
\end{itemize}

However such a polyhedron does not exist. Therefore $P^*=\overline{\Gamma}$.

Suppose now that $P_\epsilon$ has a different subsequence converging to some other $\tilde{P^*}$. The same argument implies that this subsequence converges to $\overline{\Gamma}$, hence $P_\epsilon\ra\overline{\Gamma}$.
 \end{proof}

 \subsection{The Schl\"afli formula and volume}

The fundamental tool to study the volume of proper polyhedra is the Schl\"afli formula (see for example \cite[Chapter: The Schl\"afli differential equality]{miln}). Remember that the volume of a proper polyhedron is equal to the hyperbolic volume of its truncation.

\begin{teo}\label{teo:schlafli}
 If $P_t$ is a smooth family of proper or almost proper polyhedra without ideal vertices with the same $1$-skeleton, same set of almost proper vertices, dihedral angles $\theta_1^t,\dots,\theta_n^t$ and edge lengths $l_1^t,\dots,l_n^t$, then
 \begin{displaymath}
  \frac{\partial\vol(P_t)}{\partial t}_{\big\rvert_{t=t_0}}=-\frac{1}{2}\sum_i l_i^{t_0}\frac{\partial\theta^t_i}{\partial t}_{\big\rvert_{t=t_0}}
 \end{displaymath}
\end{teo}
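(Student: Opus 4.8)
The plan is to derive the formula from the classical Schl\"afli differential equality for compact hyperbolic polyhedra (see e.g.\ \cite{miln}) applied to the truncation, together with a careful bookkeeping of the edges. We may assume $P_{t_0}$ has positive volume, the zero-volume case being trivial (both sides of the identity vanish identically since all truncated lengths are $0$).

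For $t$ near $t_0$ let $P_t^0$ denote the truncation of $P_t$. First I would check that $t\mapsto P_t^0$ is a smooth family of compact hyperbolic polyhedra whose combinatorial type does not depend on $t$. Since no $P_t$ has ideal vertices, and a vertex can change type (real versus hyperideal) only by passing through being ideal, the set of hyperideal vertices of $P_t$ is independent of $t$; by hypothesis the set of almost proper vertices is too. Hence the combinatorics of $P_t^0$ is constant. For smoothness, the face-planes of $P_t$ vary smoothly with $t$, hence so do its vertices, the polar planes $\Pi_v$ of its hyperideal vertices, and the half-spaces $H_v$; therefore $P_t^0=P_t\cap\bigcap_{v\text{ hyperideal}}H_v$ varies smoothly. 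Finally, as observed in and around Corollary \ref{cor:locbad}, the absence of ideal vertices makes each $P_t^0$ a genuine compact hyperbolic polyhedron (all its vertices are real).

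Next I would apply the classical Schl\"afli formula to $P_t^0$, writing $\alpha_e$ for its dihedral angles and $\ell_e$ for its edge lengths:
\[
\frac{d}{dt}\vol(P_t^0)\Big\rvert_{t=t_0}=-\frac12\sum_{e}\ell_e(t_0)\,\frac{d\alpha_e}{dt}\Big\rvert_{t=t_0}.
\]
Now I classify the edges $e$ of $P_t^0$. Every edge arising from the truncation has dihedral angle identically $\tfrac\pi2$, so it contributes $0$. Every remaining edge of $P_t^0$ is (a sub-segment of) a well-defined edge $e_i$ of $P_t$: if $e_i$ is not contained in any truncation plane, then in $P_t^0$ it has dihedral angle $\theta_i^t$ and length $l_i^t$; if $e_i$ is contained in a truncation plane --- which can happen only when $P_t$ is almost proper --- then, as recorded in the proof of Corollary \ref{cor:locbad}, it has dihedral angle $\theta_i^t-\tfrac\pi2$ in $P_t^0$, so that $\tfrac{d}{dt}(\theta_i^t-\tfrac\pi2)=\tfrac{d\theta_i^t}{dt}$, while its length is still $l_i^t$. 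An edge $e_i$ of $P_t$ entirely removed by the truncation simply has $l_i^t=0$. Substituting, the right-hand side becomes $-\tfrac12\sum_i l_i^{t_0}\,\tfrac{d\theta_i^t}{dt}\big\rvert_{t=t_0}$, the sum now ranging over all edges $e_1,\dots,e_n$ of $P_t$.

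Since $\vol(P_t)$ is by definition $\vol(P_t^0)$, this is exactly the asserted identity. The only genuinely delicate point I expect is the bookkeeping in the almost proper case: checking that an edge lying in a truncation plane still contributes with length $l_i^t$ and that shifting its dihedral angle by the constant $\tfrac\pi2$ leaves the derivative unchanged; everything else is the classical Schl\"afli formula together with the smoothness and combinatorial rigidity of the family of truncations established in the first step.
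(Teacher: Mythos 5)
Your proposal is correct and follows essentially the same route as the paper, which likewise justifies the formula by passing to the truncation $P_t^0$ (compact since there are no ideal vertices) and observing that each angle of $P_t^0$ either equals the corresponding angle of $P_t$, differs from it by the constant $\tfrac{\pi}{2}$ (almost proper edges lying in a truncation plane), or is constantly $\tfrac{\pi}{2}$ and so contributes nothing. Your write-up merely makes explicit the smoothness and combinatorial constancy of $t\mapsto P_t^0$ and the zero-length bookkeeping, which the paper leaves implicit.
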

\begin{oss}
 Usually this result is stated for compact hyperbolic polyhedra (i.e. with no hyperideal vertices), however it is straightforward to generalize it to proper or almost proper polyhedra without ideal vertices (it can also be generalized to the case with ideal vertices; however this case carries some technical difficulties and we will not need it). This is because a path of proper or almost proper polyhedra $P_t$ induces a path of truncated polyhedra $P_t^0$; the length of an edge in $P_t$ is the same (by definition) as the length of the corresponding edge in the truncation, and the derivatives of all angles are the same, since an angle of $P_t^0$ is either the same as the corresponding angle of $P_t$, differs by a constant, or is constantly $\frac{\pi}{2}$ and thus does not contribute to the sum.
 
 However the Schl\"afli formula definitely does not hold for generalized hyperbolic polyhedra, as the truncation angles could vary. 
\end{oss}

In particular, the Schl\"afli formula implies that if all the angles are \emph{decreasing}, the volume is \emph{increasing}. 

We recall a well known fact about the convergence of volumes.

\begin{lem}\label{lem:continuity}
 If $P_n\subseteq \h, n\in\N$ is a sequence of compact hyperbolic polyhedra, and $P_n$ converges to the compact convex set $P$ as $n\ra+\infty$ (as elements of ${({(\rp)}^*)}^F$), then $\vol(P_n)\ra\vol(P)$.
\end{lem}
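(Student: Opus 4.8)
The plan is to pass to the projective (Klein) model, where $\h$ is the open unit ball $B\subseteq\R^3$ and $\vol(A)=\int_A\rho$ for a fixed continuous positive density $\rho$ on $B$ (which blows up as one approaches $\partial\h$). Each $P_n$ is then a genuine Euclidean-convex compact subset of $\overline{B}$, cut out by the $F$ closed half-spaces $H^1_n,\dots,H^F_n$ whose bounding planes are the coordinates of $P_n$ in $\left((\rp)^*\right)^F$; likewise $P=\bigcap_iH^i$, and by hypothesis $H^i_n\to H^i$. The first step is the almost-everywhere convergence $\mathbf 1_{P_n}\to\mathbf 1_P$. If $x\in\mathrm{int}(P)$ then $x$ lies in the interior of every $H^i$ (for finitely many closed half-spaces, $\mathrm{int}\bigcap_iH^i=\bigcap_i\mathrm{int}(H^i)$), hence in every $H^i_n$ for $n$ large, so $x\in P_n$ eventually; if $x\notin P$ then, since each $H^i$ is closed, $x$ lies strictly outside some $H^{i_0}$, hence strictly outside $H^{i_0}_n$ for $n$ large, so $x\notin P_n$ eventually. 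Thus $\mathbf 1_{P_n}(x)\to\mathbf 1_P(x)$ for every $x\notin\partial P$, and $\partial P$ is Lebesgue-null since $P$ is convex, hence $\rho$-null inside $\h$. Fatou's lemma applied to $\mathbf 1_{P_n}\rho$ gives $\vol(P)\le\liminf_n\vol(P_n)$; in particular the case $\vol(P)=+\infty$ is settled.

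Assume now $\vol(P)<+\infty$; it remains to prove $\limsup_n\vol(P_n)\le\vol(P)$, and for this it suffices to trap all but finitely many $P_n$ inside a convex set $Q$ of finite hyperbolic volume that can be chosen arbitrarily close to $P$, for then dominated convergence (dominating $\mathbf 1_{P_n}\rho$ by $\mathbf 1_Q\rho$) gives $\vol(P_n)\to\vol(P)$. Such a containment is automatic for Euclidean $\epsilon$-neighbourhoods: if there were $x_n\in P_n$ with $d_{\mathrm{eucl}}(x_n,P)\ge\epsilon$, then a subsequential limit $x^*$ would satisfy $d_{\mathrm{eucl}}(x^*,P)\ge\epsilon$, so $x^*$ would lie strictly outside some closed $H^{i_0}$, contradicting $x_n\in H^{i_0}_n\to H^{i_0}$ together with closedness of half-spaces. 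Hence for every $\epsilon>0$ we have $P_n\subseteq N_\epsilon(P)$ for $n$ large. When $P$ stays in a compact subset of $\h$, this finishes the proof: for $\epsilon$ small the set $N_\epsilon(P)$ is a compact subset of $\h$ on which $\rho$ is bounded, it works as the dominating set $Q$, and $N_\epsilon(P)\downarrow P$ as $\epsilon\to 0$ forces $\vol(N_\epsilon(P))\to\vol(P)$.

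The one genuinely delicate point — and the main obstacle — is the case where $P$ meets $\partial\h$ (for instance $P$ has ideal vertices): then $N_\epsilon(P)$ contains a full Euclidean half-ball around each such boundary point, which has \emph{infinite} hyperbolic volume, so it cannot serve as $Q$. The remedy is to enlarge $P$ only in the interior — pushing the finitely many bounding planes outward by a small hyperbolic distance — while near each point $p\in P\cap\partial\h$ replacing the Euclidean neighbourhood by a fixed horoball (cusp) neighbourhood based at $p$; this keeps $\vol(Q)$ finite, one still has $P_n\subseteq Q$ for $n$ large because near $p$ the polyhedra $P_n$ accumulate onto $P$ in a controlled, cusp-like fashion, and shrinking both the thickening and the horoballs lets $\vol(Q)\to\vol(P)$. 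Carrying out this cusp analysis near $P\cap\partial\h$ — together with the harmless remark that if $\mathrm{int}(P)=\emptyset$ then $P$ lies in a plane, so $\vol(P)=0$ and the $\liminf$ bound already forces $\vol(P_n)\to 0$ — is where the actual content of the lemma lies; everything else is Fatou and dominated convergence.
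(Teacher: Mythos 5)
Your argument for the case that actually occurs --- $P$ a compact convex subset of $\h$, hence bounded away from $\partial\h$ --- is correct and complete, and it takes a genuinely different route from the paper. You work with indicator functions: almost-everywhere convergence of $\mathbf{1}_{P_n}$ to $\mathbf{1}_P$ (off the Lebesgue-null set $\partial P$), Fatou for the lower bound, and domination by $\mathbf{1}_{N_\epsilon(P)}\,\rho$, which is integrable because $N_\epsilon(P)$ stays in a compact part of $\h$ for small $\epsilon$, for the upper bound. The paper instead rescales: it chooses $\epsilon_n\to 0$ so that the sets $\Phi_{1-\epsilon_n}(P_n)\subseteq P$ form an increasing exhaustion of $P$, applies monotone convergence, and then observes that rescaling by a factor tending to $1$ changes the volume negligibly. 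Your version avoids the exhaustion bookkeeping and the estimate $\vol(\Phi_{1-\epsilon_i}(P_i))-\vol(P_i)\to 0$; both proofs use the compactness of $P$ inside $\h$ in an essential way. (One small slip: when $\mathrm{int}(P)=\emptyset$ the $\liminf$ bound gives only $0\le\liminf_n\vol(P_n)$; what forces $\vol(P_n)\to 0$ is the containment $P_n\subseteq N_\epsilon(P)$ with $\vol(N_\epsilon(P))$ small, i.e. the upper bound, not the lower one.)

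Your last paragraph, however, misreads the scope of the lemma and asserts something false. The hypothesis that $P$ is a compact convex set with $P_n\subseteq\h$ is to be read as compactness in $\h$, so $P$ does not meet $\partial\h$; the ``delicate case'' you defer is therefore empty, and it is not ``where the actual content of the lemma lies.'' More importantly, the remedy you sketch there cannot work: it is not true that near a point $p\in P\cap\partial\h$ the $P_n$ ``accumulate onto $P$ in a controlled, cusp-like fashion.'' The paper's own counterexample, stated immediately after this lemma, gives a sequence of compact prisms $P_\alpha$ converging to a tetrahedron $T_\pi$ with one ideal vertex for which $\vol(P_\alpha)\to 2\vol(T_\pi)\neq\vol(T_\pi)$: a region of definite volume escapes through the ideal point and is invisible in the limit, so no finite-volume horoball thickening of $P$ contains the $P_n$ eventually. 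This is exactly why the paper proves the separate Lemma \ref{lem:continuity2} for limits with ideal points, under the additional hypothesis that vertices of $P_n$ converging to the same limit point have mutual distance tending to $0$. Had the lemma really covered $P\cap\partial\h\neq\emptyset$, your proof would have a genuine (indeed unfillable) gap; as stated, it does not.
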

\begin{proof}
 Suppose first that $P$ has non-empty interior (i.e. it is a polyhedron).
 Up to isometry, we can assume that $0$ is in the interior of $P$ (therefore, $0\in P_n$ for $n$ big enough). Consider the map $\Phi^0_\lambda:\R^3\ra \R^3$ given by multiplication by $\lambda$. View $P_n$ and $P$ as subsets of $\h\subseteq \R^3$. Since $P_n\ra P$, there exists an $\epsilon_n$ such that $\Phi_{1-\epsilon_n}(P_n)\subseteq P$ with $\epsilon_n\ra0$ as $n\ra\infty$, because $P$ has non-empty interior. By modifying $\epsilon_i$ for $i\leq n$  appropriately, we can assume that $\Phi_{1-\epsilon_n}(P_n)$ contains every $\Phi_{1-\epsilon_i}(P_i)$ for $i<n$ (since $P_n$ is convex for every $n$). Therefore the sequence $\Phi_{1-\epsilon_n}(P_n)$ is an increasing sequence of compact (convex) polyhedra that converges to $P$; the monotone convergence theorem then tells us that $\vol\left(\Phi_{1-\epsilon_i}(P_i)\right)\ra \vol(P)$. Since $\epsilon_i\ra0$ then $\vol\left(\Phi_{1-\epsilon_i}(P_i)\right)-\vol(P_i)$ converges to $0$, which concludes the proof.
 
 If instead $P$ has volume $0$, then for any $\epsilon>0$, there is an $\overline{n}$ big enough that $P_n$ is contained in an $\epsilon$ neighborhood of $P$ for any $n>\overline{n}$, which implies that $\vol(P_n)\ra \vol(P)=0$.
\end{proof}

Notice that in general it is \emph{not true} that if $P_n$ is a sequence of compact hyperbolic polyhedra that converges to a (non-degenerate) \emph{finite-volume} $P$, then $\vol(P_n)\ra\vol(P)$. To see a counterexample, take a tetrahedron $T_\alpha$ with $3$ real vertices and a hyperideal vertex, with sum of angles at the hyperideal vertex equal to $\alpha<\pi$. Glue two copies of $T_\alpha$ along the truncation face to obtain a compact prism $P_\alpha$, and let $\alpha\ra\pi$. Then $\vol(P_\alpha)=2\vol(T_\alpha)\ra2\vol(T_\pi)$ but $P_\alpha$ converges to a tetrahedron (where one copy of $T_\alpha$ gets pushed to infinity), and thus $\vol(P_\pi)=\vol(T_\pi)$.

However, if $P_n$ converges to a finite-volume polyhedron $P$ and the convergence is nice enough, then the result still holds; this is the content of the following lemma.

\begin{lem}\label{lem:continuity2}
 Suppose $P_n$ is a sequence of compact polyhedra (i.e. all their vertices are real) with $1$-skeleton $\Gamma$ such that $P_n\ra P$ with $P$ a finite volume (possibly degenerate) convex subset of $\h$ and such that if $v^n_1,v^n_2$ are vertices of $P_n$ that converge to the same point in $P$, then the distance between $v_1^n$ and $v_2^n$ converges to $0$. Then $\vol(P_n)\ra\vol(P)$.
\end{lem}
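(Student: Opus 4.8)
The plan is to split both $P_n$ and $P$ into a ``thick part'' away from the ideal vertices of $P$ and a union of cusp neighbourhoods around those ideal vertices, prove that the volumes of the thick parts converge by the argument of Lemma \ref{lem:continuity}, and then prove that no volume escapes into the cusps. This last point is exactly where the distance hypothesis is needed: the prism counterexample above is precisely a situation in which a fixed amount of volume runs off toward an ideal vertex of $P$ while $P_n$ still converges, and there the vertices accumulating at that ideal point stay at bounded mutual distance instead of collapsing.

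\emph{Setup.} Since it suffices to show that every subsequence of $P_n$ has a further subsequence along which $\vol\ra\vol(P)$, I would pass to a subsequence along which every vertex of $P_n$ converges in $\overline{\h}$; because $P_n\ra P$ and $P$ is closed, each limit lies in $P$, hence is either a point of $P\subseteq\h$ or one of the finitely many ideal vertices $w_1,\dots,w_m$ of $P$ (as an intersection of at most $F$ half-spaces of finite volume, $P$ meets $\partial\h$ in finitely many points). Fix $\epsilon>0$ and choose pairwise disjoint horoballs $B_j$ centred at $w_j$, in general position with respect to $P$, small enough that $P\cap B_j$ is the cone of $P$ at $w_j$ and $\sum_j\vol(P\cap B_j)<\epsilon$. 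Write $P^{\mathrm{cpt}}=P\setminus\bigcup_j\mathrm{int}(B_j)$ and $P_n^{\mathrm{cpt}}=P_n\setminus\bigcup_j\mathrm{int}(B_j)$. The sets $P_n^{\mathrm{cpt}}$ are compact convex sets contained, for $n$ large, in a fixed compact subset of $\h$ (any finger of $P_n$ reaching near $\partial\h$ persists in the limit, hence points at $w_j$), and they converge to $P^{\mathrm{cpt}}$; so $\vol(P_n^{\mathrm{cpt}})\ra\vol(P^{\mathrm{cpt}})$ by the rescaling-plus-monotone-convergence argument of Lemma \ref{lem:continuity}, which does not use that the approximating sets are polyhedra.

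\emph{The cusps.} Fix $j$ and work in the upper half-space model with $w_j=\infty$, so $B_j=\{z>h_j\}$ and $P\cap B_j=Q_j\times(h_j,\infty)$ for a compact convex polygon $Q_j$ (degenerate, of area $0$, if $P$ is lower-dimensional there). Since $w_j$ is an extreme point of $P=\lim P_n=\lim\mathrm{conv}(\text{vertices of }P_n)$, some vertices of $P_n$ converge to $w_j$; after a further subsequence let $S$ be the set of vertices of $\Gamma$ whose realisation in $P_n$ converges to $w_j$, and let $H_n$ be the minimum of their heights, so $H_n\ra\infty$. Two facts drive the estimate. First, any vertex of $P_n$ of unbounded height converges to $w_j$ (its horizontal coordinate stays bounded since $P_n\ra P$), hence has height $\ge H_n$; choosing the level $H$ larger than all the finitely many finite limit-heights, for $n$ large $P_n$ has no vertex of height in $(H,H_n)$, so $P_n\cap\{z\ge H\}$ is the convex hull of $P_n\cap\{z=H\}$ (which converges to $Q_j\times\{H\}$) together with $C_n:=P_n\cap\{z\ge H_n\}$. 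Second — the only use of the hypothesis — all vertices in $S$ converge to the single point $w_j$, so their pairwise distances tend to $0$, which in this model forces them to comparable heights and within horizontal distance $o(1)$; hence $\mathrm{diam}(C_n)\ra0$. Therefore $P_n\cap\{z\ge H\}$ is, up to an error vanishing with $n$, a cone with a single-point apex over a base converging to $Q_j\times\{H\}$, so $\limsup_n\vol(P_n\cap\{z\ge H\})\le\vol(Q_j\times(H,\infty))=:\eta(H)$ with $\eta(H)\ra0$ as $H\ra\infty$. Combining with $\vol(P_n\cap\{h_j<z\le H\})\ra\vol(Q_j\times(h_j,H])$ and letting $H\ra\infty$ gives $\limsup_n\vol(P_n\cap B_j)\le\vol(P\cap B_j)$; the reverse inequality $\liminf_n\vol(P_n\cap B_j)\ge\vol(P\cap B_j)$ needs no hypothesis, by truncating at a fixed height, applying Lemma \ref{lem:continuity}, and letting the height go to $\infty$. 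Summing over $j$ and adding the thick part yields $\vol(P_n)\ra\vol(P)$.

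\emph{Main obstacle.} The delicate step is the cusp estimate: a priori a chunk of $P_n$ of volume bounded away from $0$ could drift toward an ideal vertex of $P$, and one must exclude this. The hypothesis is exactly what does so — it collapses the vertices of $P_n$ accumulating at $w_j$ onto one another, so the part of $P_n$ ``above'' them shrinks to a point. The remaining technical work is to make the ``up to vanishing error'' comparisons precise, namely controlling the horizontal spread of $P_n\cap\{z\ge H\}$ and the boundary effects when $Q_j$ or the relevant sub-polygons are nearly degenerate; everything else is bookkeeping with Lemma \ref{lem:continuity} and monotone convergence.
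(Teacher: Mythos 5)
Your proof is correct in strategy but takes a genuinely different route from the paper's. The paper reduces everything to tetrahedra: it cones $P_n$ over an interior point $p_n\ra p\in P\cap\h$, triangulates each pyramid in a fixed combinatorial way, and observes that each tetrahedron of the triangulation converges to a (possibly degenerate) tetrahedron of a triangulation of $P$; the collapsing hypothesis is then used only in the one bad case --- a degenerate limit tetrahedron with ideal vertices to which two vertices of $P_n$ converge --- where it forces an edge length to $0$, so the tetrahedron is contained in an $\epsilon$-neighborhood of a face and its bounded surface area kills the volume. You instead decompose globally into a thick part and horoball cusps around the ideal points of $P$, and use the hypothesis to show that the cluster $C_n$ of vertices drifting toward an ideal vertex has diameter $o(1)$, so the top of $P_n$ is essentially a cone over the horospherical cross-section with apex escaping to infinity. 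Both work; the paper's reduction buys simplicity, since for a single tetrahedron the horosphere truncation and the thinness argument are immediate, whereas your cusp estimate carries real technical debt (the step ``pairwise distances of vertices in $S$ tend to $0$, hence $\mathrm{diam}(C_n)\ra 0$'' also needs control of where the edges joining a collapsing vertex to a low vertex cross the horosphere $\{z=H_n\}$, since those crossing points are extreme points of $C_n$; this does follow from the bounded horizontal extent of $P_n$ and the large radius of those geodesic arcs, but it is a computation, not a remark). What your approach buys in exchange is a cleaner separation of the two inequalities --- the $\liminf$ bound needs no hypothesis at all, and the $\limsup$ bound isolates exactly where and how volume could escape --- which makes the role of the hypothesis, and the failure mode in the paper's prism counterexample, more transparent. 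One small correction: the finiteness of $\overline{P}\cap\partial\h$ should be justified not by ``finite volume intersection of $F$ half-spaces'' (which fails for degenerate $P$, whose ideal boundary could a priori be an arc) but by the fact that $P$ is the limit of the convex hulls of the vertices of $P_n$, hence the convex hull of their finitely many limit points, and a convex hull of finitely many points of $\overline{\h}$ meets the strictly convex sphere $\partial\h$ only at those points.
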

\begin{proof}
If $P$ is a point on $\partial\h$ then every edge length of $P_n$ goes to $0$ by assumption; this implies that $P_n$ is contained in an $\epsilon$-neighborhood of one of its vertices, hence its volume goes to $0$. Suppose then that $P$ contains a point $p\in\h$.

Take any subsequence $P_{n_k}$ of $P_n$; up to a subsequence $P_{n_{k_j}}$ we can assume that every vertex of $P_{n_{k_j}}$ converges. If we prove that $\vol\left(P_{n_{k_j}}\right)\ra\vol(P)$ we conclude the proof; therefore we can assume that every vertex of $P_n$ converges.

 It suffices to prove the lemma for tetrahedra. Indeed we can triangulate $P_n$ by taking the cone over an interior point $p_n$ converging to $p$, and by further triangulating each pyramid in a fixed combinatorial way. Each tetrahedron in the triangulation converges to a (maybe degenerate) tetrahedron in a triangulation of $P$, hence if the volume of each tetrahedron converges to the volume of its limit, then the volume of $P_n$ must converge to the volume of $P$.
 
 So let $P_n$ be a tetrahedron converging to a (possibly degenerate) tetrahedron $T$. If $T$ is compact we can apply Lemma \ref{lem:continuity} to conclude. If $T$ is non-degenerate and has ideal points, we can for the sake of simplicity further triangulate so that $T$ only has one ideal vertex $v$. Then if we truncate $P_n$ and $T$ along a horosphere centered in $v$ we obtain a sequence of compact sets with volumes close to those of $P_n$ converging to a compact set with volume close to that of $T$, which concludes the proof.
 
 If instead $T$ is degenerate and has ideal vertices, we distinguish $2$ cases. If no two vertices of $P_n$ converge to the same point, then we can apply the same reasoning as the previous case by cutting along appropriate horospheres. Otherwise, we need to show that $\vol(P_n)\ra 0$. There are (at least) two vertices of $P_n$ converging to the same vertex of $T$, and by assumption their distance goes to $0$. This means that for $n$ big enough, $P_n$ is contained in an $\epsilon$-neighborhood of one of its faces. Since $P_n$ has bounded surface area, $\vol(P_n)\ra 0$.
\end{proof}

\section{The maximum volume theorem}\label{sec:maxvol}

\subsection{Related results}

Before we delve into the proof of Theorem \ref{maxvol}, a few words are necessary on what makes this result particularly complicated.

\textbf{$\mathcal{A}_\Gamma$ contains polyhedra with obtuse angles.}

Often when hyperbolic polyhedra are considered, they are restricted to have acute angles (especially when studying their relationship to orbifolds and cone-manifolds). This limitation greatly simplifies matters, mainly because of two properties of acute-angled polyhedra, both consequences of Andreev's Theorem \cite{andreev}:

\begin{itemize}
 \item Dihedral angles are global coordinates for acute-angle polyhedra;
 \item The space of dihedral angles of acute-angle polyhedra is a convex subset of $\R^N$ (except when the polyhedron has $4$ vertices).
\end{itemize}

Using these properties, it is a just a matter of applying the Schl\"afli identity \ref{teo:schlafli} to prove that the maximum volume of acute-angled polyhedra is the volume of the rectification.

By contrast, if we allow obtuse angles things are considerably more difficult. It is unknown whether dihedral angles determine a polyhedron; furthermore the space of dihedral angles is never convex \cite{diaz}.

On the other hand, requiring that a polyhedron be acute-angled is very restricting; in particular, there are no non-simple acute-angle compact polyhedra. Therefore allowing obtuse angles greatly increases the scope of Theorem \ref{maxvol}.

\textbf{$\mathcal{A}_\Gamma$ contains polyhedra with any combination of real, ideal or hyperideal vertices.}

The case of polyhedra with only ideal vertices has been known for a long time.

\begin{teo}\cite[Theorem 14.3]{rivinvol}\label{teo:rivvol}
 There is a unique (up to isometry) hyperbolic ideal polyhedron with fixed $1$-skeleton of maximal volume; furthermore this polyhedron is maximally symmetric.
\end{teo}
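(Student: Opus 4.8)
The plan is to parametrize hyperbolic ideal polyhedra with $1$-skeleton $\Gamma$ by their dihedral angles and to exploit the strict concavity of the volume in these coordinates. By Lemma \ref{lem:angid}, at each $k$-valent vertex $v$ of an ideal polyhedron the dihedral angles $\theta_e$ of the edges $e\ni v$ satisfy $\sum_{e\ni v}(\pi-\theta_e)=2\pi$; and by Theorem \ref{teo:bonbao}, applied to the case where every vertex is ideal, an assignment $e\mapsto\theta_e\in(0,\pi)$ is realized by such a polyhedron precisely when these vertex equalities hold together with the remaining (now all strict) inequalities of Theorem \ref{teo:bonbao}, the realization being unique up to isometry. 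Thus $\vec\theta\mapsto P(\vec\theta)$ identifies the set of isometry classes of ideal polyhedra with $1$-skeleton $\Gamma$ with the relative interior $\mathcal{C}^{\circ}$ of a convex polytope $\mathcal{C}$ cut out in the space of angle assignments by finitely many linear equalities and closed linear inequalities; since $0\le\theta_e\le\pi$ on $\mathcal{C}$, the polytope $\mathcal{C}$ is compact.

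\textbf{Strict concavity of the volume.} Let $V(\vec\theta):=\vol(P(\vec\theta))$. The key step is to prove that $V$ is real-analytic and strictly concave on $\mathcal{C}^{\circ}$. Coning $P(\vec\theta)$ from a fixed ideal vertex and triangulating the remaining faces decomposes it geometrically into ideal tetrahedra, whose shapes --- the triples of dihedral angles, each summing to $\pi$ --- depend real-analytically on $\vec\theta$, and $V$ is the sum of their volumes. Each summand is the function $\Lambda(\alpha)+\Lambda(\beta)+\Lambda(\gamma)$ on the open simplex $\{\alpha+\beta+\gamma=\pi\}$, whose Hessian there is negative definite by a short computation using the identity $\cot\alpha\cot\beta+\cot\beta\cot\gamma+\cot\gamma\cot\alpha=1$; and since the collection of tetrahedron shapes, together with the fixed combinatorial gluing pattern, determines $P(\vec\theta)$ up to isometry, along every nonzero tangent direction in $\mathcal{C}^{\circ}$ at least one tetrahedron changes shape, which makes the second derivative of $V$ strictly negative in that direction. \textbf{This strict concavity is the technical heart of the argument; it has to be set up with care because the genuine edge lengths of an ideal polyhedron are infinite, so the underlying Schl\"afli bookkeeping must be done with horospherically renormalized quantities. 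This, together with locating the maximum in the interior (below), is the main obstacle; the remaining steps are formal.}

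\textbf{Existence, uniqueness, symmetry.} One extends $V$ continuously to the compact polytope $\mathcal{C}$: as $\vec\theta\to\partial\mathcal{C}$, some $\theta_e\to0$ (two vertices coalesce to an ideal point on a common tangent line), some $\theta_e\to\pi$ (two faces become coplanar), or a prismatic inequality saturates (the polyhedron splits along a separating cycle), and in each case $P(\vec\theta)$ converges, after a suitable normalization, to a finite-volume convex ideal set whose volume is the limit of $V$, by an argument of the type used in Lemma \ref{lem:continuity2}. Hence $V$ attains a maximum on $\mathcal{C}$; one checks that this maximum is not attained on $\partial\mathcal{C}$, because every boundary stratum corresponds to an ideal polyhedron that is strictly simpler (fewer edges, merged vertices, or split off a piece) and can be deformed inward so as to strictly increase the volume. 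By strict concavity the maximum is therefore attained at a unique interior point $\vec\theta^{*}$, giving the unique maximal-volume ideal polyhedron $P^{*}=P(\vec\theta^{*})$. Finally, $\mathrm{Aut}(\Gamma)$ acts on $\mathcal{C}$ by permuting the edges, preserving all the defining (in)equalities, and relabeling the faces of $P(\vec\theta)$ by an automorphism produces an isometric polyhedron, so $V(g\cdot\vec\theta)=V(\vec\theta)$ for all $g\in\mathrm{Aut}(\Gamma)$; hence the unique maximizer $\vec\theta^{*}$ is $\mathrm{Aut}(\Gamma)$-invariant, i.e.\ $\theta^{*}_e$ depends only on the orbit of the edge $e$. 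Since dihedral angles determine an ideal polyhedron up to isometry, every automorphism of $\Gamma$ is induced by a symmetry of $P^{*}$, so $P^{*}$ is maximally symmetric.
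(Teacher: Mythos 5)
This statement is not proved in the paper at all: it is quoted from Rivin \cite{rivinvol} and used as a black box, so there is no internal argument to compare against. Judged on its own, your proposal follows the same overall architecture as Rivin's actual proof --- parametrize ideal polyhedra by their dihedral angles via the realization theorem, observe that the admissible angles form the relative interior of a compact convex polytope, prove strict concavity of the volume there, rule out boundary maxima, and deduce maximal symmetry of the unique maximizer from the $\mathrm{Aut}(\Gamma)$-invariance of the volume function. The symmetry step and the reduction of uniqueness to strict concavity plus an interior maximum are correct as written.

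The gap sits exactly where you flagged the ``technical heart'', and the argument you sketch there does not close it. Decomposing $P(\vec\theta)$ into ideal tetrahedra gives $V(\vec\theta)=\sum_i\bigl(\Lambda(\alpha_i)+\Lambda(\beta_i)+\Lambda(\gamma_i)\bigr)$, where the shape parameters $(\alpha_i,\beta_i,\gamma_i)$ are \emph{nonlinear} (real-analytic) functions of $\vec\theta$: the way a dihedral angle of $P$ splits among the tetrahedra sharing that edge is determined by the geometry, not by any affine rule. Concavity is not preserved under precomposition with a nonlinear map: along a path $\vec\theta(t)$ the chain rule gives $\tfrac{d^2}{dt^2}V=\dot{\vec\alpha}^{\,T}H\dot{\vec\alpha}+\nabla\bigl(\textstyle\sum\Lambda\bigr)\cdot\ddot{\vec\alpha}$, and the second term has no sign, so ``at least one tetrahedron changes shape'' does not yield a strictly negative second derivative. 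Rivin's proof avoids this by a variational characterization: for a fixed combinatorial triangulation, the condition that an abstract angle structure $\vec\alpha$ induces the boundary dihedral angles $\vec\theta$ (and sums to $2\pi$ around interior edges) is \emph{affine} in $(\vec\alpha,\vec\theta)$; the Lobachevsky functional $\sum\Lambda$ is concave in $\vec\alpha$; its maximum over the affine slice above $\vec\theta$ is attained precisely at the geometric decomposition; and a partial supremum of a concave function over affinely varying slices is concave in $\vec\theta$. Strictness then still requires a separate, nontrivial argument. Since uniqueness --- and hence the symmetry claim --- rests entirely on strict concavity, the proof does not go through without repairing this step. By comparison, the interior-maximum step, though only sketched, admits the standard fix that the one-sided derivative of $\Lambda$ at $0$ is $+\infty$, so deforming off a degenerate boundary stratum strictly increases the volume.
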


Notice that even though the maximal volume polyhedron is unique, the symmetry property does not always determine it uniquely. 

Theorem \ref{teo:rivvol} relies on the fact that ideal polyhedra share the two above properties of acute-angled polyhedra: they are determined by their angles, and the space of dihedral angles is a convex polytope. Moreover, this result relies on the concavity of the volume function for ideal polyhedra; this result once again does not hold for compact polyhedra.

We might wish to extend Theorem \ref{teo:rivvol} to polyhedra with both real and ideal vertices; a pleasant result would be the following:

\textbf{``Theorem''}: For any polyhedron $P$ with real and ideal vertices, there is a polyhedron $Q$ with only ideal vertices, with the same $1$-skeleton as $P$ and such that $\vol(P)\leq\vol(Q)$.

This would imply that the maximal volume ideal polyhedron is also of maximal volume among polyhedra with both real and ideal vertices. Unfortunately, this ``Theorem'' has no chance of being true: there are some graphs which are the $1$-skeleton of hyperbolic polyhedra but are not the $1$-skeleton of any ideal polyhedron (a complete characterization of inscribable graphs, i.e. those that are the $1$-skeleton of an ideal polyhedron, can be found in \cite{rivin}). In such a case, the maximal volume polyhedron (if it even existed) would have some ideal vertices and some real vertices, would possibly not be unique and would certainly be very difficult to determine.

Therefore, admitting vertices of all $3$ types allows us to obtain a satisfying result, where the maximal volume is obtained at a very concrete polyhedron whose volume can be explicitly computed. However it comes at the cost of increased complications in the proofs, mainly having to deal with almost proper polyhedra.

\textbf{It is not a simple application of the Schl\"afli identity.}

Looking at the Schl\"afli identity might suggest that it immediately implies the Maximum Volume Theorem. An argument might go like this:

\emph{A local maximum must have every length of an edge (not arising from truncation) equal to $0$; therefore it must be the rectification.}

This does not work for two reasons:

\begin{itemize}
\item Simply putting derivatives equal to $0$ would give local maxima in the interior of $\mathcal{A}_\Gamma$; to truly find a supremum we would have to analyze the behavior at its boundary.
 \item Dihedral angles do not give local coordinates on all of $\overline{\mathcal{A}_\Gamma}$; in particular it is unknown whether they would give local coordinates at the boundary $\partial\mathcal{A}_\Gamma$ (if, for example, some angles are equal to $0$).
\end{itemize}

\subsection{Proof of the maximum volume theorem}

The following is the main result of the paper.
\begin{teo}\label{maxvol}
For any planar $3$-connected graph $\Gamma$,
 $$\vol\left(\overline{\Gamma}\right)=\sup_{P\in\mathcal{A}_\Gamma}\mathrm{Vol}(P).$$
\end{teo}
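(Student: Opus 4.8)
The plan is to prove the two inequalities separately. For $\sup_{P\in\mathcal{A}_\Gamma}\vol(P)\ge\vol(\overline{\Gamma})$ I would use Lemma \ref{lem:convrett}, which produces proper polyhedra $P_\epsilon$ with $1$-skeleton $\Gamma$ converging to $\overline{\Gamma}$; since the $P_\epsilon$ may be taken with all dihedral angles equal to $\epsilon$, the Schläfli formula (Theorem \ref{teo:schlafli}) makes $\vol(P_\epsilon)$ monotone in $\epsilon$, and a continuity argument for the truncations (they are compact and, after fixing representatives, increase to the finite-volume truncation of $\overline{\Gamma}$, cf.\ Lemmas \ref{lem:continuity}--\ref{lem:continuity2}) gives $\vol(P_\epsilon)\to\vol(\overline{\Gamma})$. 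The substance of the theorem is the reverse inequality: every $P\in\mathcal{A}_\Gamma$ satisfies $\vol(P)\le\vol(\overline{\Gamma})$, which I would prove by induction on the number of vertices of $\Gamma$, the base case being the tetrahedron, handled in \cite[Theorem 4.2]{ushi} and matching $v_8=\vol(\overline{\Gamma})$ as computed in Section \ref{sec:rect}.

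For the inductive step I would fix $P\in\mathcal{A}_\Gamma$ and run a volume-increasing flow. Since the dihedral angle map fails to be a local diffeomorphism at polyhedra with ideal vertices, I would first perturb $P$ by an arbitrarily small amount so that every ideal vertex becomes hyperideal (slightly lowering the angle sum at each such vertex); by continuity of the volume this costs as little as we like, hence is harmless for a supremum bound. Now $P$ has no ideal vertices, so by Corollary \ref{cor:loccoord} the dihedral angles are local coordinates and we may flow inside $\mathcal{A}_\Gamma$ in the direction that decreases every dihedral angle; by the Schläfli formula the volume is nondecreasing along the flow. The flow is continued until either some dihedral angle reaches $0$ (which, since two planes meeting at angle $0$ inside $\overline{\h}$ meet in a line tangent to $\partial\h$ by Remark \ref{rmk:tang}, forces the corresponding edge to leave $\h$, i.e.\ to collapse), or some edge length goes to $0$, or a face collapses, or a real vertex runs into the polar plane of a hyperideal vertex (producing an almost proper limit, dealt with via Corollary \ref{cor:locbad}). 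Using compactness of $((\mathbb{RP}^3)^*)^F$ I would extract a limit $P_\infty$; by Lemmas \ref{lem:coll1} and \ref{lem:coll2} it is a proper or almost proper generalized hyperbolic polyhedron whose $1$-skeleton $\Gamma'$ is either $\Gamma$ or is obtained from $\Gamma$ by a finite sequence of edge- and face-collapses, and the continuity Lemmas \ref{lem:continuity}--\ref{lem:continuity2} give $\vol(P)\le\vol(P_\infty)$.

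Two cases then close the induction. If $\Gamma'=\Gamma$, the flow can only have stopped because all dihedral angles reached $0$ simultaneously, so $P_\infty$ has $1$-skeleton $\Gamma$ with all edges tangent to $\partial\h$, hence $P_\infty=\overline{\Gamma}$ by Remark \ref{rmk:tang} and we are done. If $\Gamma'$ has strictly fewer vertices than $\Gamma$ (which is the case for each move of Lemma \ref{lem:coll2}), then $\Gamma'$ is $3$-connected because it is the $1$-skeleton of a nondegenerate generalized hyperbolic polyhedron (Remark \ref{rem:unique}), and $\vol(P_\infty)\le\sup_{\mathcal{A}_{\Gamma'}}\vol$ since almost proper polyhedra lie in $\overline{\mathcal{A}_{\Gamma'}}$, on which the volume is a limit of volumes of proper polyhedra. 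By the inductive hypothesis this supremum equals $\vol(\overline{\Gamma'})$, so it remains only to establish a monotonicity statement: \emph{if $\Gamma'$ is obtained from a $3$-connected $\Gamma$ by an edge collapse or a face collapse, then $\vol(\overline{\Gamma'})\le\vol(\overline{\Gamma})$.} I would prove this from the explicit description in Proposition \ref{prop:exuniq}: the truncation of $\overline{\Gamma}$ is the right-angled ideal polyhedron whose faces are indexed by the vertices and faces of $\Gamma$, and an edge collapse merges the two adjacent faces indexed by the endpoints of that edge while deleting one ideal $4$-valent vertex, realizing the truncation of $\overline{\Gamma'}$ as a proper convex subset of the truncation of $\overline{\Gamma}$; the case of a face collapse is reduced to the previous one by passing to duals via Remark \ref{oss:dual}.

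The main obstacle is the degeneration analysis in the inductive step. One must show that the flow always either reaches the rectification or produces an honest simpler generalized hyperbolic polyhedron: degenerate (lower-dimensional) limits must be excluded — plausibly because the volume stays positive and nondecreasing, but this requires care, as does verifying the hypotheses of the continuity lemmas when the angles are no longer bounded away from $0$ near the end of the flow — and one must rule out being ``stuck'' with some dihedral angles at $0$ and others positive, matching each genuine limit with one of the combinatorial moves of Lemma \ref{lem:coll2}. The interaction with almost proper polyhedra, and with collisions of distinct truncation faces, is the most delicate point; this is exactly where Corollary \ref{cor:locbad} and a careful local analysis of the polar planes are needed.
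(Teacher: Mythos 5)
Your overall strategy — a volume-increasing flow obtained by decreasing all dihedral angles, a degeneration analysis producing a simpler polyhedron, and a monotonicity statement for $\vol(\overline{\Gamma})$ under edge/face collapses — is the same as the paper's. But there are three concrete gaps. First, your proof of the monotonicity statement is wrong as stated: the truncation of $\overline{\Gamma'}$ is \emph{not} realized as a convex subset of the truncation of $\overline{\Gamma}$. Both are rigid right-angled ideal polyhedra (unique up to isometry by Theorem \ref{teo:bonbao}), and there is no canonical inclusion of one into the other; merging two faces of a right-angled ideal polyhedron does not produce another right-angled ideal polyhedron sitting inside it. The paper's Proposition \ref{prop:edgecollapse} instead passes to the dual (edge collapse in $\Gamma$ becomes edge deletion in $\Gamma^*$, Remark \ref{oss:dual}) and runs a one-parameter family $P_{\epsilon,\alpha}$ of hyperideal polyhedra in which the angle $\alpha$ at the doomed edge increases from $\epsilon$ to $\pi-k\epsilon$; the Schl\"afli formula gives the volume comparison in the limit $\epsilon\to 0$.

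Second, your induction/termination scheme is not well-founded. In the case $\Gamma'=\Gamma$ you claim the flow must have stopped because ``all dihedral angles reached $0$ simultaneously,'' so that $P_\infty=\overline{\Gamma}$. This cannot happen while $P$ has a real vertex: by Lemma \ref{lem:angid} the angle sum at a real vertex exceeds $(k-2)\pi$, so the angles are bounded away from $0$ along the flow. The flow can perfectly well stop at some $t_*>0$ with the \emph{same} $1$-skeleton, because the limit is almost proper or acquires an ideal vertex (where angles cease to be local coordinates); your induction on the number of vertices of $\Gamma$ then makes no progress. The paper resolves this by descending on the lexicographically ordered triple $(\#\textrm{ vertices},\#\textrm{ real vertices},\#\textrm{ proper vertices})$ of the \emph{polyhedron} (Proposition \ref{prop:degen}), handling the ideal-limit-vertex case by an explicit translation $\Psi_{\lambda\vec{a}}$ that converts a real vertex into an ideal one without losing volume, and terminating not at $\overline{\Gamma}$ but at a hyperideal polyhedron, where convexity of the Bao--Bonahon angle space finishes the argument. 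Third, the degeneration analysis itself — excluding lower-dimensional limits, showing vertices limiting to $\partial\h$ must be real and must be vertices of the limit (Lemmas \ref{lem:anglesum} and \ref{lem:hypvert}), and the proper/almost-proper ideal-vertex dichotomy — is the technical core of the paper and is only flagged, not supplied, in your proposal.
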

\begin{proof}
 For the sake of understanding, the proof of the key Proposition \ref{prop:degen} is postponed in Subsection \ref{sec:proofs}.
 
 It is easy to see that $\vol\left(\overline{\Gamma}\right)\leq \sup_{P\in\mathcal{A}_\Gamma}\mathrm{Vol}(P)$; the family $P_\epsilon$ defined in the statement of Lemma \ref{lem:convrett} is a family in $\mathcal{A}_\Gamma$, has decreasing angles (hence increasing volume) and it converges to $\overline{\Gamma}$. Since the length of every edge of $P_\epsilon$ converges to $0$ (because it is equal to the distance between two planes who converge to be asymptotic), we can apply Lemma \ref{lem:continuity2} to the truncation of $P_\epsilon$ to obtain that $\vol(P_\epsilon)\ra\vol(\overline{\Gamma})$.

 Now we prove that $\vol(P)\leq \vol\left(\overline{\Gamma}\right)$ for any $P\in\mathcal{A}_\Gamma$. We first assume that $P$ only has hyperideal vertices.

 Let $(\theta_1,\dots,\theta_k)$ be the dihedral angles of $P$. Since the space of dihedral angles of hyperideal polyhedra is convex by Theorem \ref{teo:bonbao}, there is a continuous family of polyhedra with decreasing angles connecting $P$ to $P_\epsilon$ (for a small enough $\epsilon$), which implies that $\vol(P)<\vol(P_\epsilon)$. But we have already seen that $\vol(P_\epsilon)$ increases to $\vol\left(\overline{\Gamma}\right)$ as $\epsilon\ra 0$, which implies that $\vol(P)<\vol\left(\overline{\Gamma}\right)$.
 
 Let now $P$ be any polyhedron in $\mathcal{A}_\Gamma$. We wish to define inductively a (possibly empty) sequence of polyhedra $P^{(1)},\dots,P^{(m)}$ without ideal vertices such that 
 \begin{equation}\label{eq:chain}
  \vol(P)<\vol\left(P^{(1)}\right)<\dots<\vol\left(P^{(m)}\right)<\vol\left(\overline{\Gamma}\right).
 \end{equation}

 The key proposition in building this chain is the following.

\begin{prop}\label{prop:degen}
 Let $P$ be either a proper or an almost proper polyhedron with $1$-skeleton $\Gamma$ with no ideal vertices and at least one real vertex. Then there exists a generalized hyperbolic polyhedron $P^*$ with the following properties:
 \begin{itemize}
  \item $\vol(P^*)>\vol(P)$;
  \item $P^*$ is either proper or almost proper;
  \item $P^*$ has at most the same number of real vertices as $P$;
  \item $P^*$ either has fewer vertices, fewer real vertices or fewer proper vertices than $P$;
  \item if $P^*$ has ideal vertices, then it has exactly one;
  \item if $P^*$ has ideal vertices, then it is proper.
\item the $1$-skeleton of $P^*$ can be obtained from $\Gamma$ via a finite sequence of the following moves:
   \begin{enumerate}[label=(\roman*)]
    \item an edge of $\Gamma$ collapses to a vertex (see Figure \ref{fig:edgecollapse});
    \item a face of $\Gamma$ collapses to an edge (see Figure \ref{fig:facecollapse}).
   \end{enumerate}
    \end{itemize}
\end{prop}

The proof of this proposition is postponed to Section \ref{sec:proofs}.

  Let now $P$ be a proper polyhedron with $1$-skeleton $\Gamma$.
  
  If $P$ only has hyperideal vertices, then we have already seen that $\vol(P)<\vol\left(\overline{\Gamma}\right)$.
  
  If $P$ has some ideal vertices, then take the polyhedron $P':=\Phi_\lambda^v(P)$ for $\lambda$ slightly larger than $1$ and $v\in P$ where $\Phi_\lambda^v$ is the homothety of center $v$ and factor $\lambda$. For any $\epsilon>0$ there is a $\lambda>1$ and close enough to $1$ such that $P'$ is proper, it has no ideal vertices and its volume is larger than $\vol(P)-\epsilon$. To see this, notice that as $\lambda\ra 1$, the truncation face of a hyperideal vertex that becomes ideal is a hyperbolic polygon that becomes Euclidean (hence, the length of its sides goes to $0$) and thus we can apply Lemma \ref{lem:continuity2}) to obtain that the volume changes continuously. If we prove that $\vol(\Phi_\lambda^v(P))<\vol\left(\overline{\Gamma}\right)$ for any $\lambda$, then we also prove that $\vol(P)\leq\vol\left(\overline{\Gamma}\right)$ which implies the theorem. Therefore we can assume that $P$ has no ideal vertices.
  
If $P$ has no ideal vertices and at least one real vertices, take $P^*$ given by Proposition \ref{prop:degen}. If $P^*$ does not have ideal vertices, take $P^{(1)}:=P^*$, if it does then take $P^{(1)}:=\Phi_\lambda^v(P^*)$ with $v\in P^*$ and $\lambda>1$ small enough to ensure that $\vol\left(P^{(1)}\right)>\vol(P)$.
  
  Suppose now that we have defined the sequence $P^{(1)},\dots,P^{(j)}$ satisfying the inequalities of \eqref{eq:chain}, and define inductively $P^{(j+1)}$.
  
  By hypothesis $P^{(j)}$ cannot have ideal vertices. If $P^{(j)}$ has no real vertices, we stop. If $P^{(j)}$ has some real vertices, then we can apply Proposition \ref{prop:degen} to it and we define $P^{(j+1)}$ to be the resulting polyhedron (once again applying a small expansion if it has ideal vertices).
 
 Notice that when passing from $P^{(i)}$ to $P^{(i+1)}$ the tuple $$\left(\#\textrm{ of vertices},\#\textrm{ of real vertices},\#\textrm{ of proper vertices}\right)$$ decreases in the lexicographic order, since we applied Proposition \ref{prop:degen} to go from $P^{(i)}$ to $P^{(i+1)}$. Therefore at some point we have to arrive at a $P^{(m)}$ with only hyperideal vertices: we have seen at the beginning of the proof that this implies $\vol(P^{(m)})<\vol\left(\overline{\Gamma'}\right)$ where $\Gamma'$ is the $1$-skeleton of $P^{(m)}$.
 
 The conclusion of the proof comes from the following proposition, which could be of independent interest.

 \begin{prop}\label{prop:edgecollapse}
  If $\Gamma'$ is obtained from $\Gamma$ either via a single edge collapsing to a vertex (see Figure \ref{fig:edgecollapse}) or a single face collapsing to an edge (see Figure \ref{fig:facecollapse}), then $\vol\left(\overline{\Gamma'}\right)\leq\vol\left(\overline{\Gamma}\right)$.
 \end{prop}
\begin{proof}
The proof works exactly the same for either the edge collapse or the face collapse; we carry it out for the edge collapse.

The key observation (see Remark \ref{oss:dual}) is that $\vol\left(\overline{\Gamma}\right)=\vol\left(\overline{\Gamma^*}\right)$, and if $\Gamma'$ is obtained from $\Gamma$ by an edge collapse, then $\Gamma'^*$ is obtained from $\Gamma^*$ by deleting an edge $e$ (see Figure \ref{fig:dualcollapse}).

\begin{figure}
   \centering
  \begin{minipage}{.45\textwidth}
   \centering
    \begin{tikzpicture}
 \draw[thick] (0,2)--(1,1)--(0,0);
 \draw[thick](0,1)--(4,1);
 \draw[thick] (4,2)--(3,1)--(4,0);
 \draw[thick, red] (0.3,1.3)--(2,2.5)--(3.7,1.3)--(3.7,0.7)--(2,-0.5)--(2,2.5);
 \draw[thick,red] (0.3,1.3)--(0.3,0.7)--(2,-0.5);
 \end{tikzpicture}
  \end{minipage}
$\longrightarrow$
  \begin{minipage}{.45\textwidth}
   \centering
    \begin{tikzpicture}
 \draw[thick] (0,2)--(1,1)--(0,0);
 \draw[thick](0,1)--(2,1);
 \draw[thick] (2,2)--(1,1)--(2,0);
 \draw[thick, red] (0.3,1.3)--(1,2)--(1.7,1.3)--(1.7,0.7)--(1,0);
 \draw[thick,red] (0.3,1.3)--(0.3,0.7)--(1,0);
 \end{tikzpicture}
  \end{minipage}
  \caption{If an edge collapses, dually an edge gets deleted.}\label{fig:dualcollapse}
\end{figure}
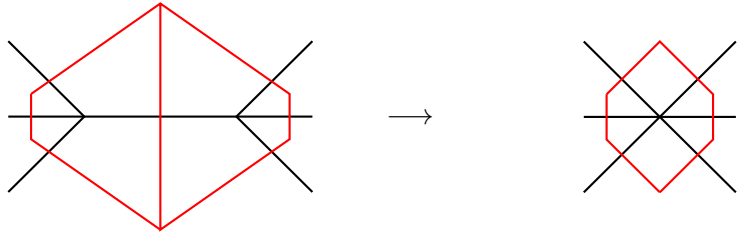

Let now $P_{\epsilon,\alpha},$ be the polyhedron with $1$-skeleton $\Gamma^*$, dihedral angle $\alpha$ at the edge $e$ and every other dihedral angle equal to $\epsilon$. Because of Theorem \ref{teo:bonbao}, for any $\epsilon$ sufficiently close to $0$, $P_{\epsilon,\alpha}$ exists for $\alpha\in(0,\pi-k\epsilon)$ where $k$ depends only on the valence of $\Gamma^*$ at the endpoints of $e$. Because of the Schl\"afli formula, $\vol(P_{\epsilon,\alpha})$ is decreasing in $\alpha$, and in particular $\vol(P_{\epsilon,\pi-k\epsilon})<\vol(P_{\epsilon,\epsilon})$. But  $\vol(P_{\epsilon,\epsilon})\ra \vol\left(\overline{\Gamma}\right)$ and $\vol(P_{\epsilon,\pi-k\epsilon})\ra \vol\left(\overline{\Gamma'}\right)$. The first convergence comes from Lemmas \ref{lem:convrett} and \ref{lem:continuity2}; the second convergence comes with the same argument as in the proof of Lemma \ref{lem:convrett} after we notice that if two faces have an angle converging to $\pi$, then they must converge to the same plane.
\end{proof}

\begin{oss}
 Proposition \ref{prop:edgecollapse} could be translated into a statement about ideal right-angled polyhedra. Specifically, it says that if $P_1$ and $P_2$ are two ideal right-angled polyhedra with $1$-skeleta $\Gamma_1$ and $\Gamma_2$ related as in Figure \ref{fig:rettangolo}, then $\vol(P_1)\geq\vol(P_2)$.
\end{oss}

 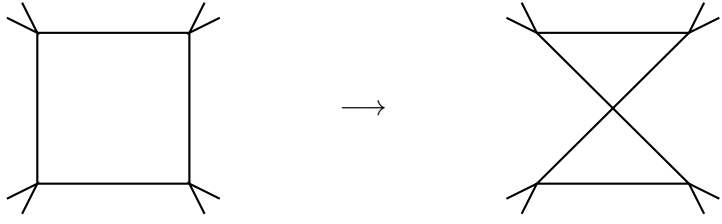
\begin{figure}
  \centering
  \begin{minipage}{.45\textwidth}
   \centering
 \begin{tikzpicture}
 \draw[thick] (3,2)--(3,0)--(1,0)--(1,2)--(3,2);
 \draw[thick] (.6,2.2)--(1,2)--(.8,2.4);
 \draw[thick] (3.4,2.2)--(3,2)--(3.2,2.4);
 \draw[thick] (.6,-.2)--(1,0)--(.8,-.4);
 \draw[thick] (3.4,-.2)--(3,0)--(3.2,-.4);
 \end{tikzpicture}
  \end{minipage}
$\longrightarrow$
  \begin{minipage}{.45\textwidth}
   \centering
 \begin{tikzpicture}
 \draw[thick] (3,2)--(1,0)--(3,0)--(1,2)--(3,2);
 \draw[thick] (.6,2.2)--(1,2)--(.8,2.4);
 \draw[thick] (3.4,2.2)--(3,2)--(3.2,2.4);
 \draw[thick] (.6,-.2)--(1,0)--(.8,-.4);
 \draw[thick] (3.4,-.2)--(3,0)--(3.2,-.4);
 \end{tikzpicture}
  \end{minipage}
  \caption{Two edges on the same face getting ``switched''.}\label{fig:rettangolo}
 \end{figure}

\end{proof}

\begin{cor}\label{voldual}
 \begin{displaymath}
  \sup_{P\in\mathcal{A}_\Gamma}\vol(P)=\sup_{P'\in\mathcal{A}_{\Gamma^*}}\vol(P')
 \end{displaymath}

\end{cor}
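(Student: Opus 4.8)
The plan is to simply chain together two results already established in the excerpt. First I would apply Theorem \ref{maxvol} to the graph $\Gamma$, obtaining $\sup_{P\in\mathcal{A}_\Gamma}\vol(P)=\vol\left(\overline{\Gamma}\right)$. Since $\Gamma^*$ is again a $3$-connected planar graph (see Remark \ref{rem:unique}), Theorem \ref{maxvol} applies equally to it, giving $\sup_{P'\in\mathcal{A}_{\Gamma^*}}\vol(P')=\vol\left(\overline{\Gamma^*}\right)$.

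It then remains to identify the two right-hand sides, and this is exactly the content of Remark \ref{oss:dual}: the right-angled ideal polyhedron $P$ constructed in the proof of Proposition \ref{prop:exuniq} is simultaneously the truncation of $\overline{\Gamma}$ (using the faces of $P$ corresponding to faces of $\Gamma$) and the truncation of $\overline{\Gamma^*}$ (using the faces corresponding to vertices of $\Gamma$), so $\vol\left(\overline{\Gamma}\right)=\vol(P)=\vol\left(\overline{\Gamma^*}\right)$. Combining the three equalities yields $\sup_{P\in\mathcal{A}_\Gamma}\vol(P)=\sup_{P'\in\mathcal{A}_{\Gamma^*}}\vol(P')$, as desired.

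There is no real obstacle here: the corollary is a formal consequence of Theorem \ref{maxvol} together with the duality observation, and the only thing to check is that $\Gamma^*$ satisfies the hypotheses of Theorem \ref{maxvol}, which is immediate since $\Gamma$ is $3$-connected and planar if and only if $\Gamma^*$ is. I would keep the written proof to essentially two sentences.
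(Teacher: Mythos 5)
Your proposal is correct and is essentially the paper's own argument: the paper's one-line proof invokes the fact (Remark \ref{oss:dual}) that $\overline{\Gamma}$ and $\overline{\Gamma^*}$ have isometric truncations, with Theorem \ref{maxvol} applied to both graphs left implicit. Your version just spells out the same chain of equalities more explicitly.
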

\begin{proof}
 As we noted before, the rectification of $\Gamma$ and the rectification of $\Gamma^*$ have isometric truncations.
\end{proof}

\subsection{Proof of Proposition \ref{prop:degen}}\label{sec:proofs}

We are going to prove Proposition \ref{prop:degen} by deforming the polyhedron $P$ until it collapses to a limit polyhedron. First we need two lemmas describing what happens when the limit polyhedron has ideal vertices. Both follow the same general reasoning (and similar notation) of Proposition 5 and Lemma 22 of $\cite{bonbao}$.

\begin{lem}\label{lem:anglesum}
 Let $P_n$ be a sequence of proper or almost proper polyhedra with $1$-skeleton $\Gamma$ and with angles bounded away from $0$ and $\pi$, and suppose $P_n$ converges as $n\ra +\infty$ to the projective polyhedron $P^*$. Further suppose that all the vertices of $P_n$ also converge, and a sequence of vertices $v_n\in P_n$ converges to $v\in\partial\h$. Let $e_1,\dots,e_k$ be the edges of $\Gamma$ with exactly one endpoint converging to $v$, and let $e_1,\dots,e_{k'}$ be the subset of those edges converging to a segment intersecting $\h$. Then the sum of external dihedral angles of $e_1,\dots,e_{k'}$ converges to either $2\pi$ (if and only if $k=k'$) or $\pi$ otherwise.
\end{lem}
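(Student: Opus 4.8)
The plan is to study the intersection of the truncated polyhedra $\widehat{P_n}$ with a horosphere centered at $v$ and then apply the Gauss--Bonnet theorem for Euclidean polygons, in the spirit of Proposition~5 and Lemma~22 of \cite{bonbao}. First I would isolate the ``blob'' over $v$: let $\Lambda\subseteq\Gamma$ be the set of vertices of $\Gamma$ whose corresponding vertices of $P_n$ converge to $v$. By Lemma~\ref{lem:coll1} and the discussion preceding Lemma~\ref{lem:coll2}, $\Lambda$ is a connected subgraph, the edges of $\Gamma$ having exactly one endpoint in $\Lambda$ are precisely $e_1,\dots,e_k$, and the edges of $\Gamma$ internal to $\Lambda$ all collapse to $v$; writing $w_i$ for the limit of the endpoint of $e_i$ outside $\Lambda$, the edge $e_i$ converges to the segment $[v,w_i]$, which meets $\h$ exactly when $i\le k'$.

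Next I would choose, for each $n$, a horoball $B_n$ centered at $v$ at a scale small enough to make the whole truncated blob of $\widehat{P_n}$ look like a point, but large enough that each $e_i$ with $i\le k'$ pokes well out of it (the natural scale lies between the size of the truncation caps of the hyperideal vertices of $\Lambda$ and the ``unit'' size of the $w_i$). With $\Sigma_n=\partial B_n\cong\mathbb{E}^2$, the set $C_n:=\widehat{P_n}\cap\Sigma_n$ is a convex planar disk whose sides come from the faces of $\widehat{P_n}$ meeting $B_n$ and whose vertices come from the edges of $\widehat{P_n}$ meeting $\Sigma_n$. Since for $i>k'$ the truncated portion $e_i\cap H_{w_i}$ collapses to $\{v\}$ and stays inside $B_n$, whereas for $i\le k'$ it converges to a nondegenerate segment running into $\h$ and hence crosses $\Sigma_n$, exactly the edges $e_1,\dots,e_{k'}$ give ``genuine'' vertices of $C_n$; all other vertices of $C_n$ arise from truncation. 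At the vertex coming from $e_i$ the interior angle of $C_n$ equals the dihedral angle $\theta_i^{(n)}$ up to an error $o(1)$ (the usual horospherical link computation, which becomes exact in the limit since $\phi(e_i)$ emanates from the center $v$), and along the sides coming from faces of $P_n$ the geodesic curvature tends to $0$. Gauss--Bonnet for the disk $C_n$ then gives
\[
\sum_{i=1}^{k'}\bigl(\pi-\theta_i^{(n)}\bigr)+T_n=2\pi+o(1),
\]
where $T_n$ is the total exterior turning of $C_n$ along the arc(s) arising from truncation, so the whole statement reduces to showing $T_n\to 0$ when $k=k'$ and $T_n\to\pi$ when $k'<k$.

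If $k=k'$, no $e_i$ is truncated near $v$, so the only truncation faces that could contribute are those of the hyperideal vertices of $\Lambda$; their polar planes converge to the plane tangent to $\partial\h$ at $v$, so at the chosen scale those truncation faces are invisible (they lie strictly inside $B_n$) and $C_n$ has no truncation vertices, i.e.\ $T_n=0$, giving $\sum_{i=1}^{k'}(\pi-\theta_i^{(n)})\to 2\pi$; with $k'=k$, $\Lambda$ a single vertex and $P_n$ hyperideal this recovers the statement used in \cite{bonbao}. If $k'<k$, then by convexity of $P^*$ the ``tangential'' edges $\phi(e_i)$, $i>k'$, occupy a cyclically consecutive block of directions at $v$, so the truncation vertices of $C_n$ form a single arc of $\partial C_n$; the two faces $f^*,g^*$ of $P^*$ flanking that block cannot share an edge of $P^*$ (an edge between them would be some $\phi(e_j)$ with $j\le k'$, contradicting that $f^*,g^*$ are the extreme faces of the block), hence $f^*$ and $g^*$ are disjoint in $\h$ while both passing through $v$, so their intersections with the horosphere are parallel geodesics. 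Consequently the limiting cross-section $C^*$ is an \emph{unbounded} convex Euclidean polygon with two asymptotically parallel ends, whose total turning at its finitely many vertices is $\pi$; comparing with the displayed identity forces $T_n\to\pi$, hence $\sum_{i=1}^{k'}(\pi-\theta_i^{(n)})\to\pi$, and since the $\theta_i^{(n)}$ converge the sum of external angles converges to $\pi$.

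The routine parts are the identification of the blob (from the convergence lemmas) and the horospherical link computation. The step I expect to be the main obstacle is the bookkeeping around the truncation arc when $k'<k$: proving that it is a single arc, that the flanking faces become asymptotically parallel, and controlling the Gauss--Bonnet error terms when $C^*$ is unbounded (which may require performing the computation on $\widehat{P_n}$ first cut off at a large radius and then letting the radius and $n$ tend to infinity). This forces one to keep simultaneous track of the positions of the truncation planes $\Pi_{w_i}$ for $i>k'$ and of the faces of $P_n$ around the blob, using convexity of $P^*$ together with the hypothesis that $e_i$ issues from $v$ without entering $\h$ whenever $i>k'$.
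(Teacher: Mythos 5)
Your overall strategy --- reading off the angle sum from the link of $v$ on a horosphere, so that $2\pi$ corresponds to a compact convex Euclidean polygon and $\pi$ to an unbounded one with two parallel ends --- is the same geometric idea the paper uses, but your execution has a genuine gap in exactly the place you flag as the main obstacle. The conclusion ``$T_n\to\pi$'' is equivalent to the two unbounded ends of the limiting cross-section being \emph{parallel}, and your justification of this is flawed. First, the two flanking faces $f^*,g^*$ \emph{do} share an edge of $P^*$ in the relevant configuration: when all the tangential $\phi(e_j)$, $j>k'$, coincide in a single edge of $P^*$, that edge is precisely the common edge of $f^*$ and $g^*$; the correct reason their planes are asymptotic is that this shared edge meets $\overline{\h}$ only at $v$. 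Second, and more seriously, ``not sharing an edge'' would not imply ``disjoint in $\h$'' anyway, and parallelism genuinely fails if $P^*$ has two or more \emph{distinct} tangent edges at $v$ pointing in different directions: then the recession cone of the cross-section has positive angular width $\beta$ and the total turning is $\pi-\beta<\pi$, so your Gauss--Bonnet identity would give a sum strictly less than $\pi$. You must therefore \emph{rule out} the configurations with two or more distinct tangent limit edges, and also the configuration $k'=0$ (all limit edges tangent), since otherwise the claimed value $\pi$ is simply wrong. The paper does exactly this in its Cases 3 and 4: the first is excluded by the same horospherical cross-section (it would have to be a non-compact convex polygon with two non-parallel ends and a vertex, which is incompatible with the configuration), the second by $3$-connectivity of $\Gamma$ (three edges with distinct far endpoints would give lines missing $\overline{\h}$). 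Your proposal contains no substitute for either argument.

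On the remaining points your route diverges from the paper's in a way that creates extra work without extra payoff. The paper never runs Gauss--Bonnet on the truncated polyhedron at finite $n$: since the dihedral angles converge, it suffices to compute the angle sum for the limit cone of $P^*$ at $v$, where the horospherical link is exactly a Euclidean polygon with the limit angles (Case 1, citing \cite[Proposition 5]{bonbao}); the case $k'<k$ is then reduced to Case 1 by truncating along the polar plane of the hyperideal endpoint of the tangent edge and doubling, which turns the ``strip'' link into a compact polygon with each angle counted twice, giving $2\sum_{i\le k'}\tilde\theta_i=2\pi$. This sidesteps both your $o(1)$ error terms (the interior angle of $C_n$ at a crossing edge is \emph{not} the dihedral angle at finite $n$) and all bookkeeping of the truncation arcs and their geodesic curvature. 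If you want to keep your finite-$n$ Gauss--Bonnet formulation, you would need to supply those estimates; if you adopt the paper's limit-first viewpoint, they disappear, but the two impossibility cases above remain indispensable either way.
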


 \begin{proof}
 First notice that all edges of a limit of proper or almost proper polyhedra must either intersect $\h$ or be tangent to its boundary.
 
  Let $e_1^n,\dots,e_k^n$ be the edges of $P^n$ corresponding to $e_1,\dots,e_k$ respectively, each converging to $e^\infty_1,\dots,e^\infty_k$ (many different $e^\infty_i$s could be in the same edge of $P^*$). We distinguish several cases.
  
  \textbf{Case 1.} The segment $e^\infty_i$ intersects $\h$ for every $i$.
  
  In this case clearly $k=k'$.
  Let $F^n_1,\dots,F_k^n$ be the faces of $P_n$ containing $e_1^n,\dots,e_k^n$. Let $\Pi_j^n$ be the plane containing $F_j^n$ and denote with $\Pi_j^\infty$ their limit. Finally let $\tilde{\theta}^n_1,\dots,\tilde{\theta}^n_k$ be the external dihedral angles of $e_1^n,\dots,e_k^n$ respectively. 
  
  Since dihedral angles between two planes vary continuously (as long as their intersection is contained in $\h$), $\lim_{n\ra \infty}\tilde{\theta}_j^n=\tilde{\theta}_j^\infty$ where $\tilde{\theta}_j^\infty$ is the angle between planes $\Pi^\infty_j$ and $\Pi^\infty_{j+1}$. 
  Then the second part of \cite[Proposition 5]{bonbao} shows that 
  $$\sum_i\tilde{\theta}^\infty_{i}=2\pi$$
  
  which concludes the proof in this case. 
  
  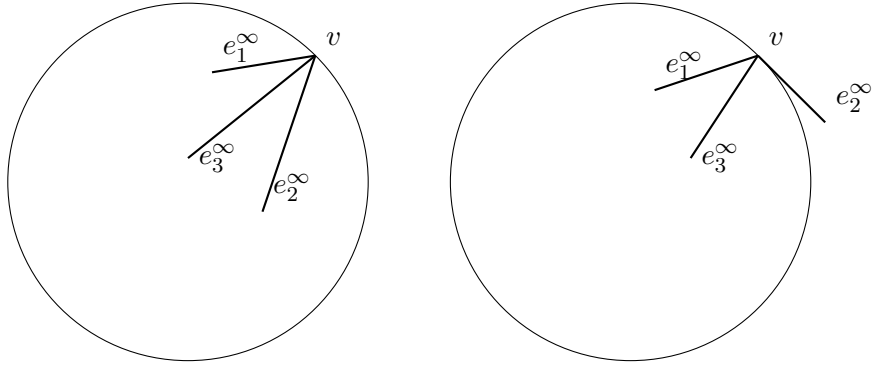
\begin{figure}
   \centering
\begin{minipage}{.45\textwidth}
 
 \begin{tikzpicture}[scale=0.79]
  \draw [thin] (1,1) circle[radius=3cm];
  \draw [thick] (3.12,3.12) node[above right]{$v$}--(1.40,2.84) node[above right]{$e_1^\infty$};
  \draw [thick] (3.12,3.12) --(2.24,0.5) node[above right]{$e_2^\infty$};
  \draw [thick] (3.12,3.12)--(1,1.4) node[right]{$e_3^\infty$};
 \end{tikzpicture}

\end{minipage}
   \begin{minipage}{.45\textwidth}
 \begin{tikzpicture}[scale=0.79]
  \draw [thin] (1,1) circle[radius=3cm];
  \draw [thick] (3.12,3.12) node[above right]{$v$}--(1.40,2.54) node[above right]{$e_1^\infty$};
  \draw [thick] (3.12,3.12) --(4.24,2.0) node[above right]{$e_2^\infty$};
  \draw [thick] (3.12,3.12)--(2,1.4) node[right]{$e_3^\infty$};
 \end{tikzpicture}
   \end{minipage}
\caption{Cases 1 and 2}

  \end{figure}

  \textbf{Case 2.} Some edges $e^\infty_i$ intersect $\h$, while at least some other $e^\infty_j$ is tangent to $\partial\h$; however, every limit edge tangent to $\partial\h$ is in the same edge of $P^*$.
  
  Let $v_j^n$ be the endpoint of $e_j^n$ converging to $v_j^\infty\neq v$; $v_j^\infty$ is necessarily hyperideal; we may suppose also that each $v_j^n$ is hyperideal. Truncate $P_n$ along $\Pi_{v_j^n}$ and double it along the truncation face. This gives a sequence $\tilde{P}_n$ converging to the polyhedron obtained from $P^*$ by truncating along $\Pi_{v_j^\infty}$ and doubling. This sequence falls under Case 1, and the sum of the external dihedral angles of $\tilde{P_n}$ at the edges converging to $v$ must be equal to $2\sum_{j=1}^{k'}\tilde{\theta}_j$, and it must converge to $2\pi$ which gives the thesis.

  \textbf{Case 3.} Some edges $e^\infty_i$ intersect $\h$, while at least some other $e^\infty_j$ is tangent to $\partial\h$; morever, there are at least two distinct edges of $P^*$ tangent to $\partial\h$.
  
  This case (like the following one) is essentially the same as the corresponding case of Lemma 22 of \cite{bonbao}.
  This means that $P^*$ has two (or more) edges tangent to $\partial\h$ in $v$ and some other edge with endpoint $v$ which intersects $\h$. Then let $S$ be a horosphere centered in $v$ and consider $A:=S\cap P^*$. The polygon $A$ is not compact, has at least two ends (one in the direction of each edge tangent to $\partial\h$) and at least one vertex with positive angle, but this is impossible, hence this case never happens.
  
  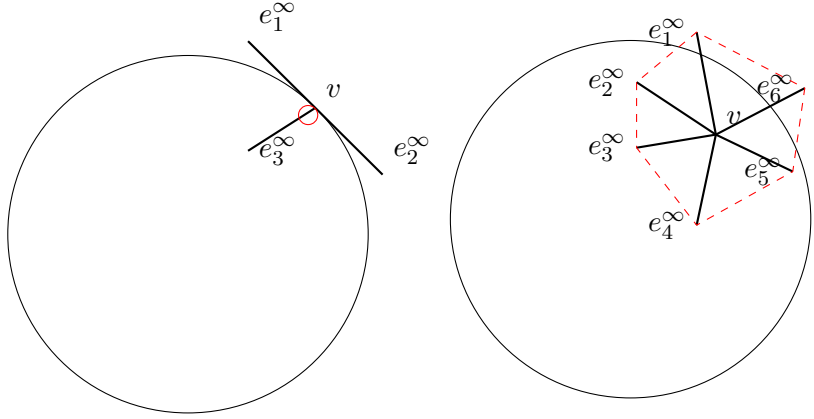
\begin{figure}
   \centering
\begin{minipage}{.45\textwidth}
 
 \begin{tikzpicture}[scale=0.79]
  \draw [thin] (1,1) circle[radius=3cm];
  \draw [thick] (3.12,3.12) node[above right]{$v$}--(2.0,4.24) node[above right]{$e_1^\infty$};
  \draw [thick] (3.12,3.12) --(4.24,2.0) node[above right]{$e_2^\infty$};
  \draw [thick] (3.12,3.12)--(2,2.4) node[right]{$e_3^\infty$};
  \draw[red] (3,3) circle[radius=.16cm];
 \end{tikzpicture}

\end{minipage}
   \begin{minipage}{.45\textwidth}
   
 \begin{tikzpicture}[scale=0.79]
  \draw [thin] (1,1) circle[radius=3cm];
  \draw [thick] (2.42,2.42) node[above right]{$v$}--(2.1,4.14) node[left]{$e_1^\infty$};
  \draw [thick] (2.42,2.42) -- (1.1,2.2) node[left]{$e_3^\infty$};
  \draw [thick] (2.42,2.42) -- (2.1,0.9) node[left]{$e_4^\infty$};
  \draw [thick] (2.42,2.42) -- (3.7,1.8) node[left]{$e_5^\infty$};
  \draw [thick] (2.42,2.42) -- (3.9,3.2) node[left]{$e_6^\infty$};
  \draw [thick] (2.42,2.42) -- (1.1,3.3) node[left]{$e_2^\infty$};
  \draw [dashed, red] (1.1,3.3)--(1.1,2.2)--(2.1,0.9)--(3.7,1.8)--(3.9,3.2)--(2.1,4.14)--(1.1,3.3);
 \end{tikzpicture}

   \end{minipage}
\caption{Cases 3 and 4}

  \end{figure}
  \textbf{Case 4.} Every edge $e^\infty_i$ is tangent to $\partial\h$.
  
  This case is impossible as well: each $e^n_i$ has one endpoint $v^n_i$ that does not converge to $v$. Since $\Gamma$ is $3$-connected, there are three indices $a,b,c$ such that $e_a,e_b,e_c$ have the endpoints $v_a,v_b,v_c$ (the one not converging to $v$) which are all different. Then the lines connecting $v_a^n,v_b^n$ and $v_c^n$ must intersect $\h$, but this is impossible since they converge to points each lying on a different line tangent to $\partial\h$ in the same point $v$. This contradicts the fact that $P_n$ is a generalized hyperbolic polyhedron.

 \end{proof}
 
 \begin{cor}
  With the same hypotheses of Lemma \ref{lem:anglesum}, if a vertex of $P_n$ converges to a point on $\partial\h$, then it converges to a vertex of the limit polyhedron.
 \end{cor}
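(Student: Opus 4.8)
The plan is to argue by contradiction, combining Corollary \ref{cor:edgeconv} with the kind of local analysis carried out for Lemma \ref{lem:anglesum}. Suppose a sequence $v_n\in P_n$ of vertices converges to $v\in\partial\h$ but $v$ is not a vertex of the limit polyhedron $P^*$. Since $v=\lim v_n$ lies on $\partial P^*$ and, by Corollary \ref{cor:edgeconv}, not in the interior of a face, it lies in the relative interior of some edge $e$ of $P^*$. By the corollary immediately following Corollary \ref{cor:edgeconv}, every edge of $P_n$ with an endpoint converging to $v$ accumulates into $e$. Let $\Lambda\subseteq\Gamma$ be the subgraph whose vertices converge to $v$ (it is connected, by the argument used for Lemma \ref{lem:coll2}). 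Since $v$ lies on exactly the two faces $F^1,F^2$ of $P^*$ that meet along $e$, every face of $P_n$ incident to $\Lambda$ converges to $F^1$ or to $F^2$, and by Lemma \ref{lem:coll1} two faces of $P_n$ sharing an edge cannot converge to the same plane.

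Let $e_1,\dots,e_k$ be the edges of $\Gamma$ with exactly one endpoint converging to $v$, in the notation of Lemma \ref{lem:anglesum}. Each $e_i$ converges to a non-degenerate segment $[v,w_i]\subseteq e$ whose relative interior is contained in the relative interior of $e$; hence the two faces of $P_n$ along $e_i^n$ converge to $F^1$ and to $F^2$, so the dihedral angle at $e_i^n$ converges to the dihedral angle $\alpha$ of $P^*$ along $e$. The crucial point I would prove is that, in the cyclic order in which the $e_i$ leave $\Lambda$ in the planar embedding, consecutive edges $e_i,e_{i+1}$ converge to segments lying on \emph{opposite} sides of $v$ along $e$: the boundary arc of the face of $P_n$ between $e_i$ and $e_{i+1}$, which runs through $\Lambda$ (all of whose internal edges collapse to $v$), converges to the boundary arc of $F^1$ or of $F^2$ going from $w_i$ through $v$ to $w_{i+1}$, and the boundary of a convex polygon does not turn back at the point $v$, which is interior to the edge $e$. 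In particular $k$ is even, the $e_i$ are split evenly between the two sides of $v$, and --- using that $k\geq 3$ because $\Gamma$ is $3$-connected --- there is at least one $e_i$ converging to a non-degenerate sub-segment of $e$ on each side of $v$.

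Finally I would derive a contradiction, distinguishing two cases according to the position of $e$ (recall from the start of the proof of Lemma \ref{lem:anglesum} that every edge of $P^*$ either meets $\h$ or is tangent to $\partial\h$). If $e$ is tangent to $\partial\h$, then $\alpha=0$ by Remark \ref{rmk:tang}, so the dihedral angles at the $e_i^n$ tend to $0$, contradicting the hypothesis that the angles of $P_n$ are bounded away from $0$. If $e$ meets $\h$, its supporting line is a secant of $\partial\h$ meeting $\partial\h$ at $v$ and at one further point $v'$; since $v$ is interior to $e$, on the side of $v$ opposite to $v'$ the edge $e$ leaves $\overline{\h}$ immediately, so by the previous paragraph some $e_i$ converges to a non-degenerate segment contained in $e$ and disjoint from $\h$. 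But a non-degenerate limit of edges of $P_n$ that is disjoint from $\h$ must be tangent to $\partial\h$ (the two points where $e_i^n$ crosses $\partial\h$ are forced to coincide in the limit), whereas a sub-segment of the secant line through $e$ is not tangent to $\partial\h$ --- a contradiction. Hence $v$ is a vertex of $P^*$.

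The step I expect to be the main obstacle is the "opposite sides" assertion: it requires understanding precisely how a convex polygon $F^i_n\to F^i$ degenerates when one or more of its vertices converge to an interior point of an edge of $F^i$, and keeping track of the (possibly many) edges internal to $\Lambda$ that collapse to $v$. Once this is in place, the rest follows quickly from results already available (Corollary \ref{cor:edgeconv} and the corollary after it, Lemma \ref{lem:coll1}, Remark \ref{rmk:tang}) together with the elementary observation on limits of edges used at the beginning of the proof of Lemma \ref{lem:anglesum}.
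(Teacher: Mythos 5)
Your reduction agrees with the paper's: Corollary \ref{cor:edgeconv} rules out the interior of a face, so everything rests on the case where $v$ lies in the relative interior of an edge $e$ of $P^*$ (the paper disposes of this by pointing to the horosphere argument of Case 3 of Lemma \ref{lem:anglesum}, whereas you argue directly). Your tangent sub-case is essentially sound. The genuine gap is in the secant sub-case, in the sentence ``a non-degenerate limit of edges of $P_n$ that is disjoint from $\h$ must be tangent to $\partial\h$ (the two points where $e_i^n$ crosses $\partial\h$ are forced to coincide in the limit).'' The limit segment here is $[v,w_i]$ with $v\in\partial\h$ as an \emph{endpoint}, and a segment can meet $\overline{\h}$ in a single point either because its line is tangent or because its line is secant and the segment leaves $\overline{\h}$ at that endpoint. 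Your parenthetical assumes $e_i^n$ crosses $\partial\h$ twice, i.e.\ that both endpoints of $e_i^n$ are hyperideal; but the endpoint of $e_i^n$ converging to $v$ may be a \emph{real} vertex of $P_n$ approaching $\partial\h$ from inside, in which case $e_i^n$ crosses $\partial\h$ only once, $e_i^n\cap\overline{\h}$ shrinks to $\{v\}$, and the limit is a non-degenerate segment on a secant line meeting $\overline{\h}$ only at its endpoint --- precisely the configuration the paper itself allows in Case 2 of Lemma \ref{lem:anglesum}, so it cannot be dismissed as impossible. The real-vertex scenario is the one that actually matters (compare Lemma \ref{lem:hypvert}). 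To close it you need a new ingredient: for instance (almost) properness --- the conjugate point of the hyperideal point $w_i$ on the secant line lies strictly between $v$ and the second intersection of the line with $\partial\h$, so $v$ lies strictly outside the polar half-space $H_{w_i}$, and hence for large $n$ a real vertex converging to $v$ cannot lie in $H_{z_n}$ for the hyperideal vertex $z_n\to w_i$ --- or else the horosphere-section argument the paper invokes.

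Two smaller points. First, the ``opposite sides'' assertion you flag as the main obstacle is indeed not proved, but you do not need its full strength: what you use is the existence of one edge of $P_n$ converging to a non-degenerate segment on the outward side of $v$, and this follows because the outward endpoint of $e$ is a vertex of $P^*$, hence a limit of vertices of $P_n$, so some edge of $\Gamma$ has one endpoint converging into $\Lambda$ and the other converging to a point of $e\setminus\overline{\h}$. Second, the claim that the two faces of $P_n$ adjacent to $e_i^n$ converge to $F^1$ and $F^2$ (so that the dihedral angle at $e_i^n$ converges to $\alpha$) is not automatic: a face adjacent to $e_i^n$ may collapse into $e$ and converge to a third supporting plane of the wedge through the line of $e$. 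What Lemma \ref{lem:coll1} and the non-degeneracy of $[v,w_i]$ actually give is two \emph{distinct} planes both containing that line; in the tangent sub-case this suffices (any two such distinct planes meet at hyperbolic angle $0$ by Remark \ref{rmk:tang}), so that part of your argument survives, but the statement should be corrected.
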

\begin{proof}
 Suppose $v_n$ is a sequence of vertices converging to $v\in P^*$ and $v$ is not a vertex of $P^*$; then it is either contained in the interior of an edge or the interior of a face. If it is contained in the interior of an edge, the analysis of Case 3 of Lemma \ref{lem:anglesum} show that there is a contradiction; likewise if it is contained in the interior of a face there is a contradiction because of the reasoning in Case 4 of Lemma \ref{lem:anglesum}.
\end{proof}

 Further notice that from the hypotheses of Lemma \ref{lem:anglesum} we can drop the assumption that the vertices of $P_n$ converge, since we can always pass to a subsequence such that this holds (and using the easy fact that if every subsequence of a sequence has a subsequence converging to $x$, the whole sequence converges to $x$).

\begin{lem}\label{lem:hypvert}
 Take a sequence of proper or almost proper polyhedra $P_n$ with $1$-skeleton $\Gamma$ and decreasing angles bounded away from $0$, such that $P_n$ converges to the projective polyhedron $P^*$ as $n\ra +\infty$. If there is a vertex $v_n$ of $P_n$ converging to an ideal vertex $v$ of $P^*$, then $v_n$ is a real vertex of $P_n$ for all $n$.
\end{lem}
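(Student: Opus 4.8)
The plan is to read off the type of $v_n$ from the sum of the dihedral angles around it, via Lemma \ref{lem:angid}, and then to show that as $v_n$ approaches $\partial\h$ this angle sum is forced up to the ``ideal'' value $(k-2)\pi$, which is incompatible with the angles being decreasing unless $v_n$ is real throughout. Concretely, let $k$ be the (constant) valence of the vertex of $\Gamma$ corresponding to $v_n$, let $\theta_1^n,\dots,\theta_k^n$ be the dihedral angles of $P_n$ at the edges incident to $v_n$, and set $S_n:=\sum_{i=1}^k\theta_i^n$. By Lemma \ref{lem:angid}, $v_n$ is hyperideal, ideal or real according to whether $S_n<(k-2)\pi$, $S_n=(k-2)\pi$ or $S_n>(k-2)\pi$. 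Since the angles of the $P_n$ are decreasing, $S_n$ is non-increasing in $n$ and (being bounded below, as the angles are bounded away from $0$) convergent; hence if $v_{n_0}$ fails to be real for some $n_0$ then $v_n$ fails to be real for every $n\ge n_0$, and it is enough to rule out that $v_n$ is hyperideal for all large $n$, and that $v_n$ is ideal for all large $n$.

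First I would treat the hyperideal case, which is the heart of the matter. Assume $v_n$ is hyperideal for all $n$. Then $P_n$ has, at $v_n$, a truncation face $T_n=P_n\cap\Pi_{v_n}$ which is a convex hyperbolic $k$-gon whose interior angles are exactly $\theta_1^n,\dots,\theta_k^n$ --- this is precisely the content of the proof of Lemma \ref{lem:angid} (see \cite[Proposition 5]{bonbao}), and it is here that properness or almost-properness is used, to ensure that $\Pi_{v_n}$ meets only the $k$ faces incident to $v_n$. As $v_n\to v\in\partial\h$, the polar plane $\Pi_{v_n}$ converges in $\rp$, by continuity of the projective polarity with respect to $\partial\h$, to the plane tangent to $\partial\h$ at $v$; since that tangent plane meets the compact set $\overline{\h}$ only at $v$, the geodesic planes $\Pi_{v_n}\cap\overline{\h}$ shrink to the point $v$, and because $T_n$ is a face of the truncation of $P_n$ we have $T_n\subseteq\Pi_{v_n}\cap\overline{\h}$, so $\mathrm{diam}(T_n)\to0$ and hence $\mathrm{Area}(T_n)\to0$. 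By the Gauss--Bonnet formula for hyperbolic polygons, $S_n=(k-2)\pi-\mathrm{Area}(T_n)\to(k-2)\pi$. This contradicts $S_n<(k-2)\pi$ for every $n$ together with $S_n$ non-increasing (which gives $S_n\le S_1<(k-2)\pi$).

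It then remains to exclude that $v_n$ is ideal for all large $n$: in that case $S_n$ would be eventually constant equal to $(k-2)\pi$, and since the $\theta_i^n$ are decreasing this forces each $\theta_i^n$ to be eventually constant, which does not happen for the families (with angles $t\vec{\theta}$, $t$ strictly decreasing) to which this lemma is applied --- alternatively, one perturbs each $P_n$ slightly, keeping the angles decreasing but making $v_n$ hyperideal, and reduces to the previous paragraph. Therefore $S_n>(k-2)\pi$, i.e.\ $v_n$ is real, for every $n$. I expect the main obstacle to be the geometric input in the hyperideal case: on the one hand justifying that $T_n$ really is the $k$-gon with interior angles $\theta_i^n$ also for almost proper polyhedra (and lies inside $\overline{\h}$), and on the other hand controlling the polar plane $\Pi_{v_n}$, equivalently the truncation face $T_n$, as $v_n$ leaves $\overline{\h}$ and tends to a point of $\partial\h$.
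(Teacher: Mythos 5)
There is a genuine gap, and it sits exactly at the step you yourself flag as ``the geometric input''. You claim that since the polar plane $\Pi_{v_n}$ converges in $\rp$ to the plane tangent to $\partial\h$ at $v$, the set $\Pi_{v_n}\cap\overline{\h}$ shrinks to the point $v$, hence $\mathrm{diam}(T_n)\to 0$ and therefore $\mathrm{Area}(T_n)\to 0$. This inference conflates Euclidean and hyperbolic quantities in the Klein model: $\Pi_{v_n}\cap\h$ is an isometric copy of $\mathbb{H}^2$ (of infinite hyperbolic area and diameter) no matter how small its Euclidean trace becomes, so the Euclidean collapse of $\Pi_{v_n}\cap\overline{\h}$ gives no control whatsoever on the hyperbolic area of $T_n$. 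In fact your own first observation shows the opposite: by Gauss--Bonnet, $\mathrm{Area}(T_n)=(k-2)\pi-S_n$ is non-decreasing and bounded below by $(k-2)\pi-S_1>0$, so under the contradiction hypothesis the area certainly does \emph{not} tend to $0$, and the statement $\mathrm{Area}(T_n)\to 0$ is precisely the nontrivial assertion that has to be proved by other means --- it cannot be extracted from the Euclidean degeneration of the polar plane. In the paper this assertion (in the form ``the sum of the limiting external dihedral angles around $v$ equals $2\pi$'') is the content of Lemma \ref{lem:anglesum}, whose proof goes through \cite[Proposition 5]{bonbao} (the link of an ideal point is a Euclidean polygon) and a case analysis on which limit edges are tangent to $\partial\h$; your proposal essentially assumes its conclusion.

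A second, related omission: you work only with the link of the single vertex $v_n$ and its $k$ incident edges, but in the situation the lemma is designed for, an entire connected subgraph $K_v\subseteq\Gamma$ of vertices and edges may collapse to $v$, and the edges incident to $v_n$ may themselves collapse (both endpoints converging to $v$), in which case their dihedral angles are not the angles of any polygon visible in $P^*$ at $v$. The paper's proof therefore runs the angle count over the edges with exactly one endpoint in $K_v$, and the heart of the argument is the combinatorial step showing that if the collapsing cluster contains a hyperideal vertex then the relevant external angles sum to more than $2\pi$: this uses the auxiliary graphs $\Upsilon$ and $\Upsilon_\infty$, Bao--Bonahon's inequality for curves encircling hyperideal strata, and the triangle inequality for external angles to enlarge the encircling curve one edge at a time. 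Since external angles increase when internal angles decrease, this contradicts the limit value $2\pi$ (or $\pi$) from Lemma \ref{lem:anglesum}. Your reduction of the ideal case to the hyperideal one (strictly decreasing angles push an ideal vertex to be hyperideal, by Lemma \ref{lem:angid}) matches the paper's remark and is fine; but the hyperideal case, as you present it, is not proved.
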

\begin{proof}

We proceed by contradiction, supposing that $v_n$ is hyperideal (notice that if $v_{\overline{n}}$ is ideal or hyperideal for some $\overline{n}$ then it is strictly hyperideal for all $n>\overline{n}$ by Lemma \ref{lem:angid}).

 Let $K_v\subseteq \Gamma\subseteq S^2$ be the union of all vertices and edges converging to $v$, and consider as we did before the edges $e_1,\dots,e_k$ of $\Gamma$ with exactly one endpoint in $K_v$. Denote with $e_1^n,\dots,e_k^n$ the corresponding edges of $P_n$ and call $e_1^\infty,\dots,e^\infty_k$ the limit edges. As before the external dihedral angle of $e_i^n$ is denoted $\tilde{\theta}_i^n$. Let $F_1^n,\dots,F_k^n$ be the faces containing these edges. Each $F_i^n$ lies on a plane $\Pi_i^n\subseteq \R^3$ and determines a half space $H_i^n\subseteq \R^3$ bounded by $\Pi_i^n$ (the one that contains $P_n$). Pick $n$ big enough so that the triple intersections of the various $\Pi_i^n$s are very close to $v$.
 
 We distinguish the same $4$ cases as in Lemma \ref{lem:anglesum}; we showed that Case 3 and Case 4 are impossible (under more lax assumptions) hence we skip them.
 
 \textbf{Case 1.} Every limit edge $e_i^\infty$ intersects $\h$.
 
 We show that $\sum_i\tilde{\theta}_i^n>2\pi$. Since the internal dihedral angles are decreasing, the external dihedral angles are increasing in $n$, which would contradict Lemma \ref{lem:anglesum}.
 
 Let $Q_n$ be the intersection of all $H_i^n$s; this is a convex non-compact subset of $\R^3$. By assumption $Q_n$ has some hyperideal vertices (since certainly $P_n\subseteq Q_n$ and $P_n$ has a hyperideal vertex close to $v$).
 
 Let $\Upsilon\subseteq \R^2$ be the $1$-skeleton of $Q_n$. It must have (unbounded) edges $e_1,\dots,e_k$ and additional edges $e_1',\dots,e_l'$, however it cannot have any cycle since an innermost cycle would bound some face of $Q_n$ not contained in $\Pi_1,\dots,\Pi_k$. 
 
 If $\gamma\subseteq \R^2$ is an embedded $1$-manifold intersecting transversely distinct edges $e_{i_1},\dots,e_{i_j}$ exactly once, we write $\sum_{\gamma}\tilde{\theta}_e$ for the sum $\sum_{m=1}^j\tilde{\theta}_{e_{i_j}}$. 
 
 Let $\Upsilon_{\infty}$ be the subgraph of $\Upsilon$ whose vertices are exactly the hyperideal vertices of $Q_n$, and whose edges are exactly the edges of $Q_n$ lying completely outside of $\overline{\h}$. By assumption $\Upsilon_\infty$ is not empty; choose one of its connected components and consider $\gamma$ the boundary of its regular neighborhood in $\R^2$. Since $\Upsilon$ contains no cycles, $\gamma$ must be connected (otherwise the regular neighborhood of $\Upsilon_\infty$ would contain a cycle). Proposition 5 of \cite{bonbao} implies that $\sum_\gamma\tilde{\theta}_e>2\pi$. If $\gamma$ only intersects the old edges $e_1,\dots,e_k$, then we have concluded. Suppose then that instead $\gamma$ intersects $e_i'$. We distinguish two cases, based on whether both endpoints of $e_i'$ are in $\Upsilon_\infty$ or not.
 
 If both endpoints of $e_i'$ are in $\Upsilon_\infty$, still $e_i'$ cannot be contained in $\Upsilon_\infty$ by construction; moreover its endpoints cannot be contained in the same connected component because $\Upsilon$ contains no cycles. One endpoint of $e_i'$ is contained in a connected component $U$ of $\Upsilon_\infty$ whose regular neighborhood is bounded by $\gamma$, the other in a component $U'$ whose regular neighborhood is bounded by $\gamma'$. Let $\gamma''$ be the boundary of the regular neighborhood of $U\cup U'\cup e_i'$. Then
 
 $$\sum_{\gamma''}\tilde{\theta}_e=\sum_{\gamma'}\tilde{\theta}_e+\sum_{\gamma}\tilde{\theta}_e-2\tilde{\theta}_{e_i'}>4\pi-2\tilde{\theta}_{e_i'}>2\pi.$$
 
 If instead one endpoint $v$ of $e_i'$ is not in $\Upsilon_\infty$, denote with $\gamma'$ the boundary of a regular neighborhood $U'$ of $v$ in $\R^2$, and with $\gamma''$ the boundary of a regular neighborhood of $U\cup U'\cup e_i'$. Then
 
 $$\sum_{\gamma''}\tilde{\theta}_e=\sum_{\gamma}\tilde{\theta}_e+\sum_{\gamma'}\tilde{\theta}_e-2\tilde{\theta}_{e_i'}>\sum_{\gamma}\tilde{\theta}_e>2\pi.$$
 
 where the first inequality is because the external angles of hyperbolic polyhedra must satisfy the triangular inequality, thus $\sum_{\gamma'}\tilde{\theta}_e-2\tilde{\theta}_{e_i'}>0$.
 
 We can repeat this process, modifying $\gamma$ while keeping $\sum_\gamma\tilde{\theta}_e>2\pi$, until $\gamma$ intersects only $e_1,\dots,e_k$, obtaining the desidered inequality. Figure \ref{fig:proof} exemplifies this process in a particular case.
 
\begin{figure}
 \centering
 \begin{minipage}{.48\textwidth}
  \centering
  \begin{tikzpicture}[scale=0.6]
\centering

\draw[thick] (3,3) node[above]{$e_2$}--(3,0) ;
\draw[thick] (3,0)--(3,-4) node[below]{$e_5$};
\draw[thick] (0.5,2) node[above left]{$e_1$}--(3,1.5) node[above left]{$v_1$};
\draw[thick] (5.5,2) node[above right]{$e_3$}--(3,0.5) node[below right]{$v_2$};
\draw[thick](0.5,-4) node[below left]{$e_6$}--(3,-0.5) node[left]{$v_3$};
\draw[thick] (3,-2)  node[right]{$v_4$}--(5.5,-4) node[below right]{$e_4$};
\end{tikzpicture}
 \end{minipage}
  \begin{minipage}{.48\textwidth}
   \centering   
  \begin{tikzpicture}[scale=0.6]
\centering

\draw[thick] (3,3) node[above]{$e_2$}--(3,0) ;
\draw[thick] (3,0)--(3,-4) node[below]{$e_5$};
\draw[thick] (0.5,2) node[above left]{$e_1$}--(3,1.5) node[above left]{$v_1$};
\draw[thick] (5.5,2) node[above right]{$e_3$}--(3,0.5) node[below right]{$v_2$};
\draw[thick](0.5,-4) node[below left]{$e_6$}--(3,-0.5) node[left]{$v_3$};
\draw[thick] (3,-2)  node[right]{$v_4$}--(5.5,-4) node[below right]{$e_4$};
\draw[thick, red] (3,1.5)--(3,0.5);
\draw[thick, green, fill=green] (3,-0.5) circle[radius=1pt];
\draw[thick, blue, fill=blue] (3,-2) circle[radius=1pt];
\end{tikzpicture}
  \end{minipage}
  \caption*{The $1$-skeleton $\Upsilon$. The subgraph $\Upsilon_\infty$ is comprised of edge $v_1v_2$ and vertex $v_4$. The edge $v_1v_2$ in $Q_n$ will lie outside $\overline{\h}$, the vertex $v_4$ will be hyperidal and $v_3$ will be real.}
  
  \begin{minipage}{.48\textwidth}
  \centering  
  \begin{tikzpicture}[scale=0.6]
\centering

\draw[thick] (3,3) node[above]{$e_2$}--(3,0) ;
\draw[thick] (3,0)--(3,-4) node[below]{$e_5$};
\draw[thick] (0.5,2) node[above left]{$e_1$}--(3,1.5) node[above left]{$v_1$};
\draw[thick] (5.5,2) node[above right]{$e_3$}--(3,0.5) node[below right]{$v_2$};
\draw[thick](0.5,-4) node[below left]{$e_6$}--(3,-0.5) node[left]{$v_3$};
\draw[thick] (3,-2)  node[right]{$v_4$}--(5.5,-4) node[below right]{$e_4$};
\draw[thick, red] (3,1.5)--(3,0.5);
\draw[thick, green, fill=green] (3,-0.5) circle[radius=1pt];
\draw[thick, blue, fill=blue] (3,-2) circle[radius=1pt];
\draw[thin] (3,1) ellipse(.2cm and .9cm) node[above right]{$\gamma$};
\end{tikzpicture}
  \end{minipage}
\begin{minipage}{.48\textwidth}
 \centering
  \begin{tikzpicture}[scale=0.6]
\centering

\draw[thick] (3,3) node[above]{$e_2$}--(3,0) ;
\draw[thick] (3,0)--(3,-4) node[below]{$e_5$};
\draw[thick] (0.5,2) node[above left]{$e_1$}--(3,1.5) node[above left]{$v_1$};
\draw[thick] (5.5,2) node[above right]{$e_3$}--(3,0.5) node[below right]{$v_2$};
\draw[thick](0.5,-4) node[below left]{$e_6$}--(3,-0.5) node[left]{$v_3$};
\draw[thick] (3,-2)  node[right]{$v_4$}--(5.5,-4) node[below right]{$e_4$};
\draw[thick, red] (3,1.5)--(3,0.5);
\draw[thick, green, fill=green] (3,-0.5) circle[radius=1pt];
\draw[thick, blue, fill=blue] (3,-2) circle[radius=1pt];
\draw[thin] (3,0.5) ellipse(.2cm and 1.3cm) node[above left]{$\gamma$};
\end{tikzpicture}
\end{minipage}
\caption*{We start with $\gamma$ encircling $v_1v_2$. We extend $\Gamma$ to encircle $v_3$: doing so increases $\sum_\gamma\tilde{\theta}_e$ because of the triangular inequality.}
\centering
  \begin{tikzpicture}[scale=0.6]
\centering

\draw[thick] (3,3) node[above]{$e_2$}--(3,0) ;
\draw[thick] (3,0)--(3,-4) node[below]{$e_5$};
\draw[thick] (0.5,2) node[above left]{$e_1$}--(3,1.5) node[above left]{$v_1$};
\draw[thick] (5.5,2) node[above right]{$e_3$}--(3,0.5) node[below right]{$v_2$};
\draw[thick](0.5,-4) node[below left]{$e_6$}--(3,-0.5) node[left]{$v_3$};
\draw[thick] (3,-2)  node[right]{$v_4$}--(5.5,-4) node[below right]{$e_4$};
\draw[thick, red] (3,1.5)--(3,0.5);
\draw[thick, green, fill=green] (3,-0.5) circle[radius=1pt];
\draw[thick, blue, fill=blue] (3,-2) circle[radius=1pt];
\draw[thin] (3,-.25) ellipse(.2cm and 2.1cm) node[above left]{$\gamma$};
\end{tikzpicture}
\caption{We finish by extending $\gamma$ to encircle $v_4$ as well. This way $\gamma$ intersects $e_1$ through $e_6$ and $\sum_\gamma \tilde{\theta}_e>2\pi$ as requested.}\label{fig:proof}
\end{figure}

 \textbf{Case 2.} Some edges $e_i^\infty$ intersect $\h$, while at least some other $e_j^\infty$ is tangent to $\partial\h$; however every edge tangent to $\partial\h$ is in the same edge of $P^*$.
 
 By renumbering if necessary suppose that $e_1^n,\dots,e_{k'}^n$ converge to edges intersecting $\h$ and the remainder do not.
 
 In this case we employ the same trick of Lemma \ref{lem:anglesum}: we truncate $Q_n$ and we double along the truncation face to fall back into Case 1. This way we show that $\sum_i^{k'}\tilde{\theta}_i^n>\pi$ which once again contradicts Lemma \ref{lem:anglesum} since the angles are decreasing.
\end{proof}

 \begin{prop:1} 
 Let $P$ be either a proper or almost proper polyhedron with $1$-skeleton $\Gamma$ with no ideal vertices and some real vertices. Then there exists a generalized hyperbolic polyhedron $P^*$ with the following properties:
 \begin{itemize}
  \item $\vol(P^*)>\vol(P)$;
  \item $P^*$ is either proper or almost proper;
  \item $P^*$ has at most the same number of real vertices as $P$;
  \item $P^*$ either has fewer vertices, fewer real vertices or fewer proper vertices than $P$;
  \item if $P^*$ has ideal vertices, then it is proper.
 \end{itemize}
Furthermore, the $1$-skeleton of $P^*$ can be obtained from $\Gamma$ via a finite sequence of the following moves:
   \begin{enumerate}[label=(\roman*)]
    \item an edge of $\Gamma$ collapses to a vertex;
    \item a face of $\Gamma$ collapses to an edge.
   \end{enumerate}
 \end{prop:1}

 \begin{proof}
 The strategy to obtain the polyhedron $P^*$ is to deform $P$ by decreasing all angles (hence, increasing the volume) until it is no longer possible.
 
 If $P$ has only real vertices, then for an appropriate $\lambda>1$ the polyhedron $\Phi_\lambda^v(P)$ (for $v\in P$) is a proper polyhedron with $1$-skeleton $\Gamma$, at least an ideal vertex and larger volume (since it clearly contains $P$), hence it satisfies the conditions in the thesis. 
 
 Suppose now that $P$ has a hyperideal vertex $v$, and let $\vec{\theta}=(\theta_1,\dots,\theta_k)$ be the dihedral angles of all proper edges of $P$ (if $P$ is proper, all its edges are proper). Then Corollary \ref{cor:loccoord} or Corollary \ref{cor:locbad} tell us that for $t$ in a neighborhood of $1$ there is a continuous family of polyhedra $P_t$ (for now, only defined up to isometry) with $P_1=P$, $1$-skeleton $\Gamma$, dihedral angles of proper edges equal to $t\vec{\theta}$ and the same almost proper vertices of $P$. By the Schl\"afli identity, the volume of $P_t$ increases as $t$ decreases. Up to a small perturbation of $\vec{\theta}$ that decreases all angles, we can assume that the path $t\vec{\theta}$ intersects the hyperplanes $\sum_{i\in I}\theta_i=k\pi$ one at a time, where $I$ is any possible subset of $\{1,\dots,k\}$. Let $t_*$ the infimum of all $t$'s such that $P_t$ is defined. 
 
 If $t_*=0$, then for $t$ very close to $0$ the polyhedron $P_t$ is hyperideal; since $P$ has some real vertices then $P_t$ has more hyperideal vertices than $P$ and thus satisfies all the conditions of the thesis. Suppose then $t_*>0$. We prove that in this case $P_t$ has an accumulation point as $t\ra t_*$.
 
 \begin{lem}
  There is a suitable choice of family $P_t$ with $1$-skeleton $\Gamma$ and angles $t\vec{\theta}$ which has an accumulation point $Q$ that is a non-degenerate projective polyhedron.
 \end{lem}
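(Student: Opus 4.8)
The plan is to adapt the proof of Lemma~\ref{lem:convrett}: pin down the isometry class of each $P_t$ by anchoring it on the hyperideal vertex $v$ together with its truncation face, extract a convergent subsequence of the $F$ planes carrying the faces of $P_t$, and show that the resulting convex limit $Q$ is non-degenerate because it contains an honest solid pyramid.

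The observation that makes $v$ a good anchor is that it stays \emph{uniformly} hyperideal. Since $v$ is hyperideal in $P=P_1$, Lemma~\ref{lem:angid} gives $\sum_{e\ni v}\theta_e<(k-2)\pi$ with $k=\deg v$; along the family the dihedral angles of $P_t$ at the edges incident to $v$ converge as $t\to t_*$, and decreasing $t$ only decreases the contributions of the proper edges (the angles at almost proper edges being held fixed), so the angle sum at $v$ in $P_t$ is at most its value at $t=1$, hence stays $<(k-2)\pi$ for every $t\in(t_*,1]$ and for the limiting value $t_*$ as well. Fixing, up to isometry, $v_t\equiv v$ — which leaves as residual freedom the stabilizer of $v$, acting on the polar plane $\Pi_v$ as the full group $\mathrm{Isom}(\mathbb{H}^2)$ — the truncation face $A_t:=P_t\cap\Pi_v$ is therefore a genuine hyperbolic $k$-gon of positive area for all $t$. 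Because $\Pi_v$ is orthogonal to every edge of $P_t$ issuing from $v$, the interior angles of $A_t$ are exactly the dihedral angles of $P_t$ at those edges, so they converge with limiting sum $<(k-2)\pi$.

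Next I would apply the planar Lemma~\ref{lem:conv2} to the sequence $A_t$: modifying the isometry class of each $P_t$ by a suitable element of the stabilizer of $v$, we may assume that $A_t$ has an accumulation point as $t\to t_*$ and that every accumulation point is a non-degenerate hyperbolic polygon $A$, possibly with ideal vertices, of positive area. Passing to a subsequence $t\to t_*$ realizing such an $A$, and then, by compactness of $\bigl((\rp)^*\bigr)^F$, to a further subsequence along which all $F$ planes carrying the faces of $P_t$ converge, we obtain a convex set $Q$ (the intersection of the corresponding limit half-spaces) as an accumulation point of $P_t$. Since $v\in P_t$, $A_t\subseteq P_t$ and $P_t$ is convex, $P_t\supseteq\mathrm{conv}\bigl(\{v\}\cup A_t\bigr)$, and in the limit $Q\supseteq\mathrm{conv}\bigl(\{v\}\cup A\bigr)$, a solid pyramid with apex $v\notin\Pi_v$ over the positive-area base $A$; hence $Q$ has non-empty interior. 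To rule out that $Q$ contains a projective line, let $\Pi^t_1,\dots,\Pi^t_k$ be the planes carrying the faces of $P_t$ incident to $v$ and $H^t_i$ the corresponding half-spaces, so $P_t\subseteq\bigcap_i H^t_i$; since each $\Pi^t_i$ contains $v$ and cuts $\Pi_v$ along an edge line of $A_t$, the region $\bigcap_i H^t_i$ is precisely the cone over $A_t$ with apex $v$, and passing to the limit $Q\subseteq\bigcap_i H^\infty_i$, the cone over $A$ with apex $v$. As $A$ is bounded (it lies in the compact set $\overline{\Pi_v}$) and $v\notin\Pi_v$, this cone contains no line, hence neither does $Q$. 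Therefore $Q$ is a non-degenerate projective polyhedron.

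The crux — essentially the only step requiring real work — is the normalization: choosing the representatives so that neither $A_t$ collapses to a lower-dimensional set nor $P_t$ runs off towards the boundary of the affine chart. Both are controlled by the single input that a hyperideal vertex with weakly decreasing angles stays uniformly hyperideal, which on the one hand keeps $A_t$ non-degenerate (so that Lemma~\ref{lem:conv2} applies) and on the other forces the limit of the vertex cones at $v$ to be the pointed cone over $A$; everything else is compactness and convexity. The only mild subtlety beyond this is that the limit polygon $A$ may acquire ideal vertices, but this affects neither the pyramid argument nor the pointedness of the limit cone.
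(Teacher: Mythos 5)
Your construction of the accumulation point is the same as the paper's: anchor the isometry class at a hyperideal vertex $v$ that persists along the family, use Lemma \ref{lem:conv2} within the stabilizer of $v$ to make the truncation polygons $A_t=P_t\cap\Pi_v$ converge to a non-degenerate polygon $A$, extract a convergent subsequence of the face planes by compactness of $\left((\rp)^*\right)^F$, and note that the limit contains the pyramid over $A$ with apex $v$, hence has non-empty interior. Where you genuinely diverge is in excluding a projective line from $Q$. The paper argues by contradiction in two cases: a line through $v$ forces a second vertex of $P_n$ to collide with $v$, so the thickness of $P_n$ goes to $0$ and $\vol(P_n)\to 0$ by \cite[Proposition 4.2]{miyamoto}, contradicting the monotone growth of the volume; a line missing $v$ forces a vertex $w_n$ far from $\overline{\h}$ with $\overrightarrow{vw_n}$ disjoint from $\overline{\h}$, which is absurd. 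You instead trap $Q$ in the tangent cone $\bigcap_i H_i^t$ of $P_t$ at $v$ and pass to the limit. This is arguably cleaner: it needs neither Miyamoto's thickness estimate nor the monotonicity of the volume, and it sidesteps the paper's delicate collision claim. The one step you should make explicit is that the \emph{limit} $\bigcap_i H_i^\infty$ is still the pointed cone over $A$ and not something larger: a limit of pointed cones can in principle acquire an entire line through the apex. This does follow from your own ingredients --- the link of $\bigcap_i H_i^\infty$ at $v$ is a closed spherically convex set of directions whose trace on the open hemisphere of directions crossing $\Pi_v$ is the compact set of directions towards $A$, bounded away from the equator because $A\subseteq\overline{\h}\cap\Pi_v$ is bounded (not merely because $\Pi_v$ has compact projective closure, which is vacuous); spherical convexity then forbids any direction on or past the equator --- but as written you assert the identification rather than prove it. A second small point: the parenthetical reason you give for $v$ staying hyperideal (angles at almost proper edges ``held fixed'') does not match how the family is parametrized by Corollary \ref{cor:locbad}; it is simpler to observe that $v_t$ varies continuously and is never ideal on $(t_*,1]$ since the family consists of polyhedra without ideal vertices, so it stays hyperideal.
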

\begin{proof}
We need to prove that, up to a suitable choice of isometry class for $P_t$, there is a converging subsequence, and its limit is a non-degenerate polyhedron contained in an affine chart (i.e. it does not contain any line).
 Choose any subsequence $P_n:=P_{t_n}$ with $t_n\ra t_*$.
 Choose $P_n$ in the isometry class of polyhedra with angles $t_n\vec{\theta}$ in such a way that a certain hyperideal vertex $v$ of $P_n$ is fixed (we assume there is one because of the remark at the beginning of the proof). 
 
 Consider now $A_n:=P_n\cap \Pi_v$: it is a sequence of polygons satisfying the hypotheses of Lemma \ref{lem:conv2} (when viewed as subsets of $\Pi_v\cong \mathbb{H}^2$); then up to isometry of $\mathbb{H}^2$ and subsequence they converge to a non-degenerate polygon $A$. This shows that we can change each $P_n$ by isometry so that $A_n$ converges to $A$. Notice that \emph{a priori} this choice of isometry for each $P_n$ might make it so that they are not all contained in the same affine chart of $\rp$.
 
 By compactness of $(\mathbb{RP}^3)^*$ the polyhedra $P_n$ are going to have a subsequence converging to some convex set $Q\subseteq\mathbb{RP}^3$.
 
 Then $Q$ must contain the pyramid with base $A$ and vertex $v$, hence it is non-degenerate.
  
 We need to show that $Q$ is contained in an affine chart. Suppose by contradiction that $Q$ contains a projective line $l$. If $v\in l$ then there must be $w_n\in P_n$ different from $v$ but converging to $v$, and then the distance between $\Pi_v$ and $\Pi_{w_n}$ must converge to $0$. This implies that the thickness of $P_n$ (i.e. the radius of the largest ball contained in $P_n$) must also converge to $0$; this implies that $\vol(P_n)\ra 0$ by \cite[Proposition 4.2]{miyamoto} which is a contradiction.
 If instead $v\notin l$ then we can find a vertex $w_n$ in $P_n$ arbitrarily far (in the Euclidean sense) from $\overline{\h}$, and then $\overrightarrow{vw_n}$ does not intersect $\overline{\h}$ which is absurd.
\end{proof}

We pass to a further subsequence, if needed, so that all vertices of $P_n$ converge.

We now distinguish $3$ cases.

 \textbf{Case 1.} If $Q$ is a generalized hyperbolic polyhedron without ideal vertices, then we define $P^*:=Q$.
 
 We need to check several properties of $P^*$: we do so in the same order we listed them in the statement. 
 
 \begin{itemize}
  \item  Since $P^*$ has compact truncation, Lemma \ref{lem:continuity} implies that $\vol(P_t)\ra \vol(P^*)$ increasingly, hence $\vol(P^*)>\vol(P)$. 
  \item Since $P^*$ is a limit of proper or almost proper polyhedra, it must be proper or almost proper itself, as being proper or almost proper is a closed condition.
  \item By Lemma \ref{lem:hypvert} an ideal or hyperideal vertex of $P_n$ cannot become ideal or real in $P^*$, hence $P^*$ has at most the same number of real vertices as $P$.
  \item If $P^*$ had the same number of vertices (hence the same $1$-skeleton), no ideal vertices and the same almost proper vertices, then the dihedral angles of proper edges would be local coordinates around $P^*$; this would imply that actually $P_t\ra P^*$ (since all limit points of $P_t$ must have the same angles, hence be locally the same) and we could extend the path $P_t$. This would contradict the fact that $t_*$ is minimal.
 \end{itemize}

 We need to show then that the $1$-skeleton of $P^*$ can be obtained by a sequence of edge or face collapses. This is done by applying Lemma \ref{lem:coll2}. 
 
 This conclude the proof in the case where $Q$ is a generalized hyperbolic polyhedron without ideal vertices.
 
 \textbf{Case 2.} If $Q$ is a projective polyhedron without ideal vertices (i.e. all its vertices are not on the sphere at infinity $\partial\h$), then it is a generalized hyperbolic polyhedron. 
 
 To see this, remember that we only need to show that every edge of $Q$ intersects $\h$. Suppose that an edge $e$ of $Q$ is instead tangent to $\partial \h$. By assumption neither of the endpoints of $e$ can lie on $\partial\h$, hence they must lie outside of $\overline{\h}$. First we prove that there is only one sequence of edges $e_n$ converging to $e$ (or even a subset of $e$). If there was another sequence of edges $e'_n$ of $P_n$ converging to a subset of $e$, then by the discussion following Lemma \ref{lem:anglesum} both of its endpoint would have to converge to points outside $\overline{\h}$. Then the lines connecting these endpoints to the endpoints of $e_n$ must, for $n$ big enough, lie outside $\h$ which is a contradiction. Therefore there is only one edge $e_n$ of $P_n$ converging to $e$, and let $F_n,G_n$ be the two faces containing $e_n$ and converging to $F,G$ faces of $Q$ containing $e$. If one of $F$ or $G$ was tangent to $\partial\h$ then we would have another contradiction as some other edge would lie outside $\overline{\h}$. Therefore $F,G$ must be contained in two hyperbolic planes intersecting with dihedral angle $0$, and the angle between $F_n$ and $G_n$ would converge to $0$ which contradicts the way we chose the angles of $P_n$.
 
 Therefore, we can once again define $P^*:=Q$. The fact that $P^*$ satisfies the thesis is exactly the same as before.
 
 \textbf{Case 3:} $Q$ is a projective polyhedron with some ideal vertices.
 
 The problem in this case is that it could happen that $\vol(P_n)$ does not converge to $\vol(Q)$. In this case we need to modify the sequence $P_n$ to arrive at some other polyhedron $Q'$.
 
 For any $v$ ideal vertex of $Q$ let (as in the proof of Lemma \ref{lem:anglesum}) $K_v\subseteq\Gamma$ be the union of all edges and vertices collapsing to $v$. Let $e_1,\dots,e_k$ be the edges of $\Gamma$ with exactly one endpoint in $K_v$ (notice that since $Q$ is non-degenerate there are at least $3$ such edges), let $e_1^n,\dots,e_k^n$ be the corresponding edges of $P_n$, and $e_1^\infty,\dots,e_n^\infty$ their limit in $Q$.
 
 Let $\overline{n}$ be big enough that all the vertices of $P_{\overline{n}}$ converging to $v$ are very close to $\partial \h$ (in the Euclidean distance), while every other vertex is farther.
 
 We further distinguish two cases.
 
 \textbf{Case 3a.} The ideal vertex of $Q$ is proper (i.e. it is not contained in the dual plane of any hyperideal vertex of $Q$).
 
 Notice that in this case we can apply the same reasoning of Case 2 to get that $Q$ is a generalized hyperbolic polyhedron.
  
 We have shown Lemma \ref{lem:anglesum} that in this case $\lim_{n\ra\infty}\sum_i\theta_{e_i^n}$ is a multiple of $\pi$, therefore $Q$ has exactly one ideal vertex because of the way we perturbed $\vec{\theta}$.
  
 Then there is a hyperbolic plane $\Pi$ delimiting the half-spaces $H_1$ and $H_2$ such that $H_1$ contains, for every $n\geq \overline{n}$, exactly the vertices of $P_n$ converging to $v$, while $H_2$ contains every other vertex of $P_n$ and every truncation plane (notice: not just truncation faces). For simplicity we can assume that $\Pi$ is the dual plane to some hyperideal point close to $v$ (so that $\Pi$ is almost orthogonal to $e_1^n,\dots,e_k^n$). Up to an isometry we can make it so that $\Pi$ is an equatorial plane (i.e. one containing $0\in\h\subseteq \R^3$). Let $\overrightarrow{a}$ be the unit normal vector to $\Pi$ pointing toward $H_1$. Recall that $\Psi_{\overrightarrow{b}}$ is the translation of $\R^3$ in the direction $\overrightarrow{b}$. 
 
 For $n$ big enough, $Q\cap H_2$ is compact and close to $P_n\cap H_2$, hence their volumes are also close. Then for any $\delta>0$ there is a $\lambda$ small enough that $\vol\left(\Psi_{\lambda\overrightarrow{a}}(Q)\cap H_2\right)>\vol\left(Q\cap H_2\right)-\frac{1}{2}\delta$, which implies that $$\vol\left(\Psi_{\lambda\overrightarrow{a}}(P_n)\cap H_2\right)>\vol\left(P_n\cap H_2\right)-\delta$$ for every $n$ big enough. It is important to notice that $\lambda$ does not depend on $n$, only on $\delta$.
 
 Then we define $P^*$ to be $\Psi_{\lambda'\overrightarrow{a}}(P_n)$ for some $n$ and $\lambda'<\lambda$. For $n$ sufficiently large and an appropriate $\lambda'$, there is some vertex of $P_n$ that becomes ideal. Moreover since every vertex that is close to $v$ is real by Lemma \ref{lem:hypvert}, this must happen before any edge of $P_n$ gets pushed out of $\overline{\h}$. This implies that $P^*$ is a generalized hyperbolic polyhedron.
 
 We prove that $P^*$ satisfies all the conditions of the thesis, in the same order.
 
 \begin{itemize}
  \item Clearly $\vol(P^*\cap H_1)>\vol(P_n\cap H_1)$, since $P_n\cap H_1\subseteq P^*\cap H_1$. Furthermore we chose $\lambda'<\lambda$ such that $\vol(P^*\cap H_2)>\vol(P_n\cap H_2)-\delta$. Therefore $\vol(P^*)>\vol(P_n)-\delta$, which implies that $\vol(P^*)>\vol(P)$ for $\delta$ small enough.
  \item The translation vector $\overrightarrow{a}$ is contained in the tangent cones of all hyperideal vertices of $P_n$ (see Figure \ref{fig:unpropertot}), therefore $P^*$ is proper by Lemma \ref{lem:unproper}.
  \item $P_n$ has at most the same number of real vertices of $P$ by Lemma \ref{lem:hypvert}; then $P^*$ has some more ideal vertices, hence fewer real vertices.
  \item $P^*$ has exactly the same number of vertices of $P$ but at least one fewer real vertices, as we noted in the preceding point.
  \item we have noted that $P^*$ is proper.
 \end{itemize}
Furthermore $P^*$ has the same $1$-skeleton as $P_n$ hence the same $1$-skeleton as $P$.
 
 \begin{figure}
  \centering
 \begin{tikzpicture}
  \draw (1,1) circle[radius=3cm];
  \draw[fill=black] (5,5) node[above]{$v$} circle[radius=1pt];
  \draw (5,5)--(-1,3.62);
  \draw (5,5)--(3.64,-1);
  \draw (0.33,3.93)node[above]{$\Pi_v$}--(3.93,0.25);
  \draw[red] (-1.8,2.1) node[above left,black]{$\Pi$} --(3.8,-.1);
  \draw[fill=red, red] (4.8,4.6) node[right,black]{$\Psi_{\lambda\overrightarrow{a}}(v)$} circle[radius=1pt];
  \draw[red,->] (1,1)--(.8,.6) node[black, below]{$\lambda\vec{a}$};
 \end{tikzpicture}
 \caption{Because $\Pi_v$ is completely contained in $H_2$, the translation by $\lambda\vec{a}$ sends $v$ into its tangent cone.}\label{fig:unpropertot}
 \end{figure}
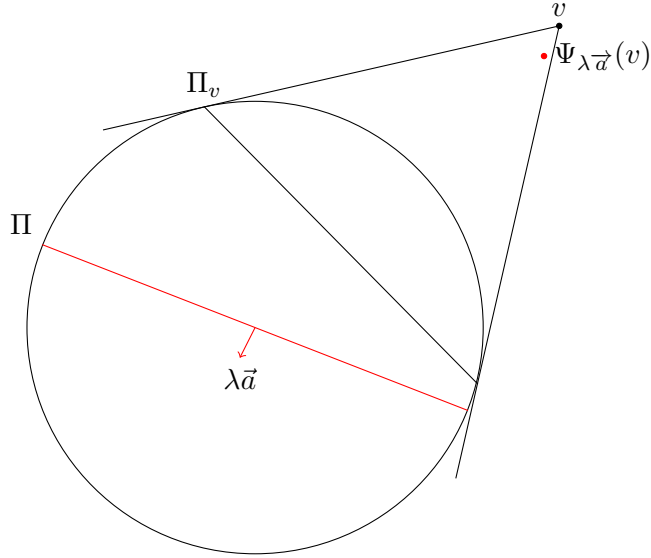

 \textbf{Case 3b.} The ideal vertex of $Q$ is almost proper.
 
 This case is almost the same as case 3a; we just need to be careful about the truncating plane containing the almost proper vertex.

 Once again by Lemma \ref{lem:anglesum} we have that $\lim_{n\ra\infty}\sum_{i=1}^{k'}\theta_{e_i^n}$ is a multiple of $\pi$, therefore $Q$ has exactly one ideal vertex.

 Let $v$ be the almost proper vertex of $Q$ and let $w$ be the vertex of $Q$ such that $v\in\Pi_w$. Since $w$ must be hyperideal, there is a unique vertex $w_n\in P_n$ converging to it. Now fix $n$ big enough. As before we can find a plane $\Pi$ that divides $\h$ in $H_1,H_2$ with $H_1$ containing every vertex that converges to $v$ and $H_2$ containing every other vertex and the dual plane to every hyperideal vertex of $P_n$ other than $w_n$. Furthermore we can choose $\Pi$ passing through $w_n$, so that $\Pi$ and $\Pi_{w_n}$ are orthogonal.
  
 With an isometry we fix $\Pi_{w_n}$ to be an equatorial plane and $\Pi_v$ is an equatorial plane orthogonal to it (notice that in this particular case of equatorial planes, being orthogonal in $\h$ is the same as being orthogonal in $\R^3$). Having $\Pi_{w_n}$ being equatorial means that $w_n$ is a point at infinity in $\rp$.
 
 Let $\overrightarrow{a}$ be the unit normal vector to $\Pi$ pointing towards $H_1$ and $\overrightarrow{b_n}$ be the unit normal vector to $\Pi_{w_n}$ pointing towards the truncation of $P_n$.
 
 Then the polyhedron $\Psi_{\lambda(\overrightarrow{a}+\epsilon\overrightarrow{b_n})}(P_n)$, for $n$ big enough and $\lambda$ and $\epsilon$ small enough, satisfies all the conditions required for $P^*$. The proof is almost the same as in case $3a$: the only additional detail to check is that any almost proper vertex lying on $\Pi_{w_n}$ becomes proper. To see this, notice that $\Psi_{\lambda(\overrightarrow{a}+\epsilon\overrightarrow{b})}$ does not move $w_n$ since it is a point at infinity (hence, leaves $\Pi_{w_n}$ fixed) and pushes every vertex away from $w_n$.

 \end{proof}

\bibliographystyle{plain}
\bibliography{Bibliography}

\def\cprime{$'$}
\begin{thebibliography}{10}

\bibitem{andreev}
E.~M. Andreev.
\newblock {On convex polyhedra in Lobachevskii spaces}.
\newblock {\em Matematicheskii Sbornik}, 123(3):445--478, 1970.

\bibitem{bonbao}
X.~Bao and F.~Bonahon.
\newblock Hyperideal polyhedra in hyperbolic 3-space.
\newblock {\em Bulletin de la Soci{\'e}t{\'e} math{\'e}matique de France},
  130(3):457--491, 2002.

\bibitem{maxvolconj}
G.~Belletti.
\newblock A maximum volume conjecture for hyperbolic polyhedra.
\newblock {\em arXiv:2002.01904}, 2020.

\bibitem{diaz}
R.~Díaz.
\newblock Non-convexity of the space of dihedral angles of hyperbolic
  polyhedra.
\newblock {\em Comptes Rendus de l'Académie des Sciences - Series I -
  Mathematics}, 325(9):993 -- 998, 1997.

\bibitem{fle}
H.~Fleischner.
\newblock The uniquely embeddable planar graphs.
\newblock {\em Discrete Mathematics}, 4(4):347--358, 1973.

\bibitem{miln}
J.W. Milnor.
\newblock {\em {Collected papers. 1. Geometry}}.
\newblock Publish or Perish, 1994.

\bibitem{miyamoto}
Y.~Miyamoto.
\newblock On the volume and surface area of hyperbolic polyhedra.
\newblock {\em Geometriae Dedicata}, 40(2):223--236, 1991.

\bibitem{mont}
G.~Montcouquiol.
\newblock Deformations of hyperbolic convex polyhedra and cone-3-manifolds.
\newblock {\em Geometriae Dedicata}, 166(1):163--183, 2013.

\bibitem{rivinvol}
I.~Rivin.
\newblock Euclidean structures on simplicial surfaces and hyperbolic volume.
\newblock {\em Annals of mathematics}, 139(3):553--580, 1994.

\bibitem{rivin}
I.~Rivin.
\newblock A characterization of ideal polyhedra in hyperbolic 3-space.
\newblock {\em Annals of mathematics}, pages 51--70, 1996.

\bibitem{rivhodg}
I.~Rivin and C.~D. Hodgson.
\newblock A characterization of compact convex polyhedra in hyperbolic 3-space.
\newblock {\em Inventiones mathematicae}, 111(1):77--111, 1993.

\bibitem{steinitz}
E.~Steinitz.
\newblock Polyeder und raumeinteilungen.
\newblock {\em Encyk der Math Wiss}, 12:38--43, 1922.

\bibitem{thurston}
W.~Thurston.
\newblock {\em The geometry and topology of three-manifolds}.
\newblock Princeton University Princeton, NJ, 1979.

\bibitem{ushi}
A.~Ushijima.
\newblock A volume formula for generalised hyperbolic tetrahedra.
\newblock In {\em Non-Euclidean geometries}, pages 249--265. Springer, 2006.

\bibitem{vesego}
A.~Vesnin and A.~Egorov.
\newblock {Ideal right-angled polyhedra in Lobachevsky space}.
\newblock {\em preprint arXiv:1909.11523}.

\bibitem{weiss}
H.~Weiss.
\newblock The deformation theory of hyperbolic cone--3--manifolds with
  cone-angles less than 2$\pi$.
\newblock {\em Geometry \& Topology}, 17(1):329--367, 2013.

\end{thebibliography}

\address
\end{document}